\DeclareFontFamily{OT1}{rsfs}{}
\DeclareFontShape{OT1}{rsfs}{n}{it}{<-> rsfs10}{}
\DeclareMathAlphabet{\mathscr}{OT1}{rsfs}{n}{it}
\newcommand{\mybox}{%
    \collectbox{%
        \setlength{\fboxsep}{1pt}%
        \fbox{\BOXCONTENT}%
    }%
}
\newtheorem{thm}{Theorem}[section]
\newtheorem{conj}[thm]{Conjecture}
\newtheorem{cor}[thm]{Corollary}
\newtheorem{lem}[thm]{Lemma}
\newtheorem{pro}[thm]{Proposition}
\theoremstyle{definition}
\newtheorem{rem}[thm]{Remark}
\numberwithin{equation}{section}
\renewcommand{\mod}[1]{{\ifmmode\text{\rm\ (mod~$#1$)}\else\discretionary{}{}{\hbox{ }}\rm(mod~$#1$)\fi}}
\newcommand{\ep}{\varepsilon}
\newcommand{\N}{{\mathbb N}}
\newcommand{\e}{\varepsilon}
\newcommand{\ex}{\mathbb{E}}
\newcommand{\piup}{\pi}
\newcommand{\mostchisum}{\sum_{\substack{\chi\mod q \\ \chi\ne\chi_0}}}
\newcommand{\Lsum}[1]{\sum_{\substack{#1 \\ L(1/2+i\gamma,\chi)=0}}}
\begin{document}

\title[Prime Number Error Terms]
 {Prime Number Error Terms} 

 \thanks{Nathan Ng was supported by the NSERC discovery grant  RGPIN-2020-06032.  This research was also funded by the PIMS-Collaborative Research Group {\it $L$-functions in Analytic Number Theory}.}

\date{\today}

\keywords{\noindent Dirichlet polynomials, mean value problems, zeros of the Riemann zeta function}

\subjclass[2010]{ Primary 11M06, 11M26; Secondary 11N37}

\author[Nathan Ng]{Nathan Ng}
\address{University of Lethbridge \\ Department of Mathematics and Computer Science \\ 4401 University Drive \\ Lethbridge, AB, Canada \ T1K 3M4 }
\email{nathan.ng@uleth.ca}


\begin{abstract} 
In 1980 Montgomery made a conjecture about the true order of the error term in the prime number theorem. 
In 2012 the author made an analogous conjecture for the true order of the sum of the M\"{o}bius function, $M(x)$. 
This refined an earlier conjecture of Gonek from the 1990's.  In this article we speculate on the true size of a large class of prime number error terms and present a general conjecture. This general conjecture includes both Montgomery's conjecture and the conjecture for $M(x)$  as special cases.    Recently, Lamzouri 
(Springer volume: Essays in Analytic Number Theory, In Honor of Helmut Maier's 70th birthday)
showed that an effective linear independence conjecture (ELI) for the zeros of the zeta function implies one of the inequalities in Montgomery's  conjecture.  
In this article we adapt Lamzouri's method to show that a generalized effective linear independence (GELI) conjecture implies
a lower bound for general prime number error terms.
Furthermore, of independent interest, we prove an $L^2$ bound for almost periodic functions. 
This allows us to weaken significantly one of the conditions in Lamzouri's main result and also give an improvement of the main theorem in an article of Akbary-Ng-Shahabi (Q. J. Math. 65 (2014), no. 3). 
 
\end{abstract}

\subjclass[2020]{Primary 11M26 11N56; Secondary 11N05 11J72}

\thanks{}

\maketitle


\section{Introduction}

The error term in the prime number theorem 
\begin{equation}
 \label{psix}
 \psi(x) = \sum_{p^k \le x} \log p 
\end{equation}
and the sum of the M\"{o}bius function 
\begin{equation}
 \label{Mofx}
 M(x) =  \sum_{n \le x} \mu(n)
\end{equation}
are two of the most studied functions in analytic number theory.  The central reason for this is due to their intimate connection to the Riemann hypothesis \footnote{The Riemann hypothesis shall be denoted RH throughout this article.} and to the distribution of the zeros of the Riemann zeta function.  The Riemann hypothesis is equivalent to the statement
\begin{equation}
 \forall \e >0, \quad
   \frac{\psi(x)-x}{\sqrt{x}} = O(x^{\e})
\end{equation}
and also to 
\begin{equation}
   \forall \e >0, \quad \frac{M(x)}{\sqrt{x}} = O(x^{\e}).
\end{equation}
In 1905, Von Koch \cite{vKo} proved  that RH implies
\begin{equation}
 \label{Vonkoch}
  \frac{\psi(x)-x}{\sqrt{x}} = O(\log^2 x). 
\end{equation}
and in 1914 Littlewood \cite{Lit1} proved that
\begin{equation}
  \label{Littlewood}
   \frac{\psi(x)-x}{\sqrt{x}} = \Omega_{\pm} ( \log \log \log x). 
\end{equation}
Surprisingly, neither of the above bounds has ever been improved.   However,  Schoenfeld established an explicit version 
of \eqref{Vonkoch} and obtained a bound of $\frac{1}{8 \pi} \sqrt{x} \log^2 x$ for $x \ge 73.2$. 
Lee and Nosal \cite{LN} improved this by replacing $\log^2 x$ in Schoenfeld's result with $\log x (\log x -\log \log x)$ for $x \ge 401211790$.  
On the other hand, Heath-Brown \cite{HB} showed that the RH and 
a strong form of the pair correlation conjecture imply that the error term in \eqref{Vonkoch} may be improved to $o( \log^2 x)$. 

One  wonders which estimate \eqref{Vonkoch} or \eqref{Littlewood} lies closer to the truth.  
In \cite{Mo3}, Montgomery made the following remarkable conjecture, which attempts to answer this question.
\begin{conj}[Montgomery, 1980]  \label{Montgomeryconjecture}
\begin{equation}
   \label{limsupinfpsi}
   \limsup_{x \to \infty} \frac{\psi(x) -x}{\sqrt{x} (\log \log \log x)^2} = \frac{1}{2 \pi} \text{ and }
     \liminf_{x \to \infty} \frac{\psi(x) -x}{\sqrt{x} (\log \log \log x)^2} = -\frac{1}{2 \pi}.
\end{equation}
\end{conj}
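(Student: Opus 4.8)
\emph{A conditional approach.}
The plan is to trade the arithmetic quantity $\psi(x)-x$ for an almost periodic function supplied by the explicit formula and then to study its extreme values via the standard randomness heuristic for the zeros of $\zeta$. An unconditional proof is beyond reach, so I would aim for a derivation conditional on RH together with an \emph{effective} linear independence hypothesis for the imaginary parts of the zeros (of the GELI flavour used below); the constant $\tfrac1{2\pi}$ will emerge from the Riemann--von Mangoldt zero-counting function. First, assuming RH, I invoke the truncated explicit formula
\[
\frac{\psi(x)-x}{\sqrt{x}} \;=\; -\sum_{|\gamma|\le x}\frac{x^{i\gamma}}{\tfrac12+i\gamma} \;+\; O\!\left(\frac{\log^2 x}{\sqrt{x}}\right),
\]
the sum over ordinates $\gamma$ of nontrivial zeros, and set $u=\log x$. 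Pairing $\gamma$ with $-\gamma$ turns the main term into the real-valued almost periodic function
\[
F(u) \;=\; -2\sum_{0<\gamma\le x}\frac{\cos\!\big(\gamma u-\arg(\tfrac12+i\gamma)\big)}{\sqrt{\tfrac14+\gamma^2}},
\]
and the conjecture becomes the assertion that $\limsup_{u\to\infty}F(u)/(\log\log u)^2=\tfrac1{2\pi}$, with the symmetric statement for the $\liminf$.

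Second, under linear independence of $\{\gamma>0\}$ over $\Q$, the vector $(\gamma u)_\gamma$ equidistributes (Kronecker--Weyl) on the infinite torus, so $F(u)$ behaves like a sum of independent bounded mean-zero random variables, the $\gamma$-term having amplitude of order $1/\gamma$. For the lower bound on the $\limsup$ I would force a resonance: pick $u$ so that every cosine with $0<\gamma\le T$ lies within a small tolerance of $-1$. The head of $F(u)$ is then, to first order,
\[
2\sum_{0<\gamma\le T}\frac{1}{\sqrt{\tfrac14+\gamma^2}} \;\sim\; 2\int_{1}^{T}\frac{dN(t)}{t} \;\sim\; \frac{1}{2\pi}\,(\log T)^2,
\]
by partial summation from $N(T)\sim\tfrac{T}{2\pi}\log T$; this is precisely where the constant is produced, and aligning the cosines near $+1$ instead yields the same bound for $-F(u)$, hence for the $\liminf$. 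Third, I must show that the tail over $T<\gamma\le x$ cannot overturn this. A quantitative $L^2$ bound for almost periodic functions — valid over finite ranges, not merely in the Parseval limit — gives mean square of order $\sum_{\gamma>T}\gamma^{-2}\ll(\log T)/T$, negligible against $(\log T)^2$ outside a sparse exceptional set; to obtain the one-sided \emph{upper} bound I would boost this to pointwise control near the resonant points via higher moments and a large-deviation estimate, forcing $F(u)\le(1+o(1))\tfrac{(\log T)^2}{2\pi}$, and symmetrically from below.

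Fourth, and here lies the crux, I must calibrate the truncation height $T$ against the location $u$ at which the resonance first occurs. Aligning all $\gamma\le T$ to within a fixed tolerance is an event of measure of order $\exp(-c\,N(T))=\exp(-c\,T\log T)$ in the Kronecker--Weyl measure, so one expects it to demand $u$ of size roughly $\exp(c\,T\log T)$; then $\log T\sim\log\log u$, and hence $(\log T)^2\sim(\log\log u)^2=(\log\log\log x)^2$. (Reassuringly, any resonance location exponential in a fixed power of $T$ produces the same leading asymptotics, since $\log(T^{O(1)}\log T)\sim\log T$.) Turning this expectation into a theorem is exactly where effectivity is indispensable: a purely qualitative linear independence hypothesis gives no rate, and without a rate one cannot rule out a resonance location as large as $\exp(\exp(T))$ — which would force a quadruple logarithm — or as small as a power of $T$ — which would force a double logarithm. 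I expect the effective equidistribution and quantitative Diophantine input — showing that the resonance both can and does occur with $\log(\text{location})=T^{1+o(1)}$ — to be the principal obstacle, the matching upper bound resting further on the $L^2$ and higher-moment machinery for almost periodic functions highlighted in the abstract. Combining the forced resonance (lower bound) with the moment estimate (upper bound) then delivers the two-sided statement with constants $\pm\tfrac1{2\pi}$.
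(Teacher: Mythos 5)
The statement you set out to prove is a \emph{conjecture}; the paper neither proves it nor claims to. What the paper establishes (Theorem~\ref{conjthm}, specializing Lamzouri's method) is, conditionally on GELI, only the two ``outer'' inequalities $\limsup\ge\tfrac1{2\pi}$ and $\liminf\le-\tfrac1{2\pi}$; the complementary ``inner'' inequalities are supported only by a Borel--Cantelli heuristic applied to the random model (and the sharp large-deviation bound of Theorem~\ref{ANSunpub}), and the concluding remarks explicitly raise the upper bound for the $\limsup$ as an open question even assuming ELI. So a ``proof'' of the full equality is out of reach with the tools on the table.

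Within that caveat, your heuristic captures the structure of the paper's argument well: the truncated explicit formula reduces to $\Phi_{X^2,\bm\gamma,\mathbf r}(x)$ with $r_n=1/\rho_n$, the Riemann--von Mangoldt count gives $\sum_{0<\gamma\le T}|\rho|^{-1}\sim\tfrac1{4\pi}(\log T)^2$, and the calibration $\log(\text{resonance location})\asymp T\log T$ is exactly what feeds through GELI to produce the triple logarithm; your remark that any $\log(\text{location})=T^{1+o(1)}$ gives the same constant mirrors the flexibility the paper exploits with $T=(\log X)^{1-\delta}$.

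Two substantive gaps remain. First, for the lower bound you propose a direct resonance (force all cosines near $-1$ simultaneously, Dirichlet/Kronecker--Weyl style). The paper instead uses Lamzouri's moment-generating-function route: Proposition~\ref{laplaceprop} transfers $\tfrac1X\int_1^X\exp(sF)\,dt$ to the random Laplace transform via GELI, a lower bound for the Bessel function $I_0$ shows the random transform is large, and Proposition~\ref{measureprop} converts this into a positive-measure set of $t$ with $F(t,T)$ large, which is then pushed up to height $e^{2X}$ by Fejér smoothing (Lemma~\ref{LemFejer}) and the $L^2$ tail estimate (Lemma~\ref{LemSecondMomentError}/Proposition~\ref{MengProp}). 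Your direct resonance attempt would have to align $\asymp T\log T$ phases within a window of length $X$, and GELI in the form stated controls nonvanishing of integer combinations, not simultaneous Diophantine approximation with explicit location; converting the one to the other is precisely the nontrivial work the MGF route sidesteps. Second, your plan to obtain the matching upper bound $F(u)\le(1+o(1))\tfrac{(\log T)^2}{2\pi}$ ``via higher moments and a large-deviation estimate'' is exactly the step the paper cannot carry out: the large-deviation bound controls the measure of the set where $\varphi$ is large in the $u$-average, but deducing a pointwise $\limsup$ bound requires a uniformity of convergence to the limiting distribution that is not available from ELI/GELI alone (the paper says as much, and phrases the deduction only as ``if the convergence of \eqref{limdisac} is sufficiently uniform''). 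You should flag this direction as genuinely conjectural rather than as a step deferred to moment estimates.
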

If one believes this, then  Littlewood's bound \eqref{Littlewood} is closer to the truth.     For $M(x)$, the best known bound on RH is for any $\e >0$, 
\begin{equation}
 \frac{M(x)}{\sqrt{x}} =  O \Big( 
 \exp( \sqrt{\log x} (\log \log x)^{\frac{5}{2}+\e}
 \Big),  
\end{equation}
due to Balazard and de Roton \cite{BdR}.

This improves earlier work of  Soundararajan  \cite{So} and of Maier and Montgomery \cite{MM}.  The best lower bound is due to Greg Hurst \cite{Hu}
\begin{equation}
 \limsup_{x \to \infty} \frac{M(x)}{\sqrt{x}} >  1.826054 \text{ and }
 \liminf_{x \to \infty} \frac{M(x)}{\sqrt{x}} <  -1.837625.
\end{equation}
Note that the results for $M(x)$ are weaker than the corresponding ones for $\psi(x)$.    We can also ask what is the true size of $M(x)$.  In 2012, the author made the following conjecture
\begin{conj}[Ng, 2012]  \label{Ngconjecture}
\begin{equation}
   \label{limsupinfMofx}
   \limsup_{x \to \infty} \frac{M(x)}{\sqrt{x} (\log \log \log x)^{\frac{5}{4}}} = \frac{8a}{5} \text{ and }
     \liminf_{x \to \infty} \frac{M(x)}{\sqrt{x} (\log \log \log x)^{\frac{5}{4}}} = -\frac{8a}{5}
\end{equation}
where
\begin{equation}
  \label{a}
  a = \frac{1}{\sqrt{\pi}} e^{3 \zeta'(-1)-\frac{11}{12} \log 2} \prod_{p} \Big( 
  (1-p^{-1})^{\frac{1}{4}} \sum_{k=0}^{\infty} \Big( \frac{\Gamma(k-\frac{1}{2})}{k! \Gamma(-\frac{1}{2})} \Big)^2  p^{-k}
  \Big). 
  \footnote{By applying the ideas from \cite{Co}, numerical computations show that $a= 0.16712 \ldots $.}
\end{equation}
\end{conj}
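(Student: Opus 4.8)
As \eqref{limsupinfMofx} is a conjecture rather than a theorem, a ``proof'' here can only be a heuristic derivation of the exponent $\tfrac54$ and of the constant $\tfrac{8a}{5}$; the plan is to run the argument that underlies Montgomery's Conjecture~\ref{Montgomeryconjecture}. I would start from the explicit formula: assuming RH and that the zeros $\rho=\tfrac12+i\gamma$ of $\zeta$ are simple, one has, after smoothing or averaging (the unsmoothed series is not known to converge),
\begin{equation*}
 \frac{M(x)}{\sqrt x}=2\,\Re\!\sum_{\gamma>0}\frac{x^{i\gamma}}{\rho\,\zeta'(\rho)}+o(1).
\end{equation*}
Granting the linear independence (LI) hypothesis that the positive ordinates $\gamma$ are $\Q$-linearly independent, the phases $\gamma\log x$ become independent and equidistributed modulo $2\pi$ as $x$ ranges over a long interval, so that $M(x)/\sqrt x$ is modelled by the random series
\begin{equation*}
 \mathcal{X}=\sum_{\gamma>0}\frac{2}{|\rho\,\zeta'(\rho)|}\cos\bigl(2\pi U_\gamma\bigr),\qquad U_\gamma\ \text{i.i.d.\ uniform on }[0,1].
\end{equation*}
Its almost-sure convergence reduces to $\sum_{\gamma>0}|\rho\,\zeta'(\rho)|^{-2}<\infty$, a consequence of the Gonek--Hejhal conjecture $\sum_{0<\gamma\le T}|\zeta'(\rho)|^{-2}\asymp T$ together with partial summation against $\gamma^{-2}$; thus $\mathcal X$ is a.s.\ finite and symmetric about $0$, and since $\sum_{\gamma>0}|\rho\,\zeta'(\rho)|^{-1}=\infty$ its essential supremum is $+\infty$, so what remains is to find the rate at which $\mathcal X$ gets large.

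For that I would invoke the Montgomery resonance mechanism. Forcing $\mathcal X\approx\lambda$ by aligning the terms with $\gamma\le T$ costs probability roughly $\exp(-c\,N(T))$, where $N(T)\sim\frac{T}{2\pi}\log T$, and it produces the value
\begin{equation*}
 V(T):=\sum_{0<\gamma\le T}\frac{2}{|\rho\,\zeta'(\rho)|}\sim 2\!\int^{T}\frac{dS(t)}{t},\qquad S(t):=\sum_{0<\gamma\le t}\frac{1}{|\zeta'(\rho)|}.
\end{equation*}
The decisive input is the first negative moment of $|\zeta'(\rho)|$: the Hughes--Keating--O'Connell conjecture $\frac1{N(T)}\sum_{0<\gamma\le T}|\zeta'(\rho)|^{2k}\sim a_k\,g_k\,(\log T)^{k(k+2)}$, evaluated at $k=-\tfrac12$ (so $k(k+2)=-\tfrac34$), gives $S(T)\sim a\,T\,(\log T)^{1/4}$, with $a$ precisely the constant of \eqref{a}: its Euler product is the arithmetic factor $a_{-1/2}$ — observe that $\Gamma(k-\tfrac12)/(k!\,\Gamma(-\tfrac12))$ are the Taylor coefficients of $(1-u)^{1/2}$, so this is the usual $\prod_p\bigl((1-p^{-1})^{k^2}\sum_m(\Gamma(m+k)/(m!\,\Gamma(k)))^2p^{-m}\bigr)$ at $k=-\tfrac12$ — while the prefactor $\tfrac1{\sqrt\pi}e^{3\zeta'(-1)-\frac{11}{12}\log 2}$ equals $g_{-1/2}/(2\pi)=\tfrac1{2\pi}G(\tfrac32)^2$, the normalised Barnes-$G$ constant from the random-matrix side of this $k=-\tfrac12$ moment, the $2\pi$ coming from $N(T)$. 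Partial summation now gives
\begin{equation*}
 V(T)\sim 2a\!\int^{\log T}\! v^{1/4}\,dv=\frac{8a}{5}\,(\log T)^{5/4}.
\end{equation*}
Balancing the probability cost $\exp(-c\,N(T))$ against the length $\log x$ of the interval over which $x$ may vary forces $N(T)\asymp\log\log x$, hence (on taking logarithms) $\log T\sim\log\log\log x$; feeding this into $V(T)$ produces the extreme value $\tfrac{8a}{5}(\log\log\log x)^{5/4}$, which is the asserted $\limsup$, and the matching $\liminf=-\tfrac{8a}{5}$ follows from the symmetry $U_\gamma\mapsto U_\gamma+\tfrac12$ of $\mathcal X$.

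The real obstacle is that \eqref{limsupinfMofx} cannot actually be proved — it implies a great deal more than RH — and even at the heuristic level one point is genuinely delicate: justifying that the fully-aligned configuration of the first $\asymp T$ ordinates is the \emph{optimal} way to make $\mathcal X$ large, and that it fixes the \emph{constant} $\tfrac{8a}{5}$ rather than merely the order of magnitude. This requires knowing that unaligned phase patterns and the tail $\sum_{\gamma>T}$ contribute only lower-order amounts, which in turn demands control of the distribution — not just the mean — of $|\zeta'(\rho)|^{-1}$ over dyadic ranges of $\gamma$, exactly the regime that determines the numerical value of $a$. (For $\psi(x)-x$ this is Montgomery's observation that the coefficients $2/|\rho|$ decay slowly enough for truncation at the optimal $T$ to be harmless; the extra $(\log\gamma)^{-3/4}$-size decay of $|\zeta'(\rho)|^{-1}$ on average is what turns his exponent $2$ into $2-\tfrac34=\tfrac54$.) A secondary, more technical point is that $M(x)$ has no unconditionally convergent explicit formula, so transferring the conclusion from the averaged model back to honest pointwise $\limsup$/$\liminf$ statements needs an extra oscillation input; a rigorous one-sided $\Omega$-bound of the conjectured \emph{order} should nevertheless be attainable by adapting Lamzouri's method, even though the precise constant $\tfrac{8a}{5}$ appears out of reach.
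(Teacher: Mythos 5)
Your heuristic derivation matches the paper's own reasoning essentially step by step: the explicit formula for $M(x)/\sqrt{x}$, the LI-based random model ${\bf X}_M$, the Hughes--Keating--O'Connell conjecture at $k=-\tfrac12$ (including the correct identification of the Barnes-$G$ factor $G(3/2)^2/(2\pi)=\tfrac{1}{\sqrt\pi}e^{3\zeta'(-1)-\frac{11}{12}\log 2}$ and the Euler product $a(-\tfrac12)$), the partial summation giving $\sum_{\gamma\le T}|\rho\zeta'(\rho)|^{-1}\sim\tfrac{4a}{5}(\log T)^{5/4}$ so that $2\alpha=\tfrac{8a}{5}$ and $A=\tfrac54$, and the large-deviation/Borel--Cantelli balancing producing the triple logarithm. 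You also correctly flag the same caveats the paper does (the need for a limiting-distribution argument plus Lamzouri's GELI machinery to turn the heuristic into a rigorous one-sided inequality, which is precisely what Corollary~\ref{Mofxcorr} and Theorem~\ref{conjthm} accomplish).
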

This refines a conjecture of Gonek (see \cite[p. 367, eq. (20)]{Ng}) where he had unspecified constants in \eqref{limsupinfMofx}.
Note that there are  other conjectures for the size of $M(x)$ due to Good and Churchhouse  \cite[Conjecture B]{GC}  
and L\'{e}vy (see \cite{Sa}). 
These conjectures are based on modelling $M(x)$ by $\sum_{n \le x} X_n$ where 
the $X_n$ are independent identically distributed random variables wth $\pm 1$ values.   Based on the law of the logarithm 
they conjecture that the maximal size of $M(x)$ is $\pm C \sqrt{x} (\log \log x)^{\frac{1}{2}}$
for certain values of $C$.  One flaw in this model is that it does not take into consideration that the values 
of  $\mu(n)$  are not independent as they depend on the prime factorization of $n$.  
Another model is the Rademacher random multiplicative functions  introduced by Wintner in \cite{Wi2}. 
These models have been extensively studied by Harper \cite{Ha}, \cite{Ha2}. 
There are several other conjectures on the size of $M(x)$.  Kotnik and te Riele have suggested, based on numerical computations, 
that $M(x)$ is of the size $\sqrt{x} (\log \log \log x)^{\Theta}$ with $\Theta =\frac{1}{2}$ and
Kaczorowski has proposed $M(x)$ is this size with $\Theta=1$. The article \cite{KN}
contains a discussion of these various conjectures and suggests that numerical studies may shed light on them. 

In this article we shall
explain where the limits  $\pm \frac{1}{2\pi}$, $\pm \frac{8a}{5}$  in \eqref{limsupinfpsi},  \eqref{limsupinfMofx} and the exponents $2$ and $\frac{5}{4}$ of $\log \log \log x$ in 
\eqref{limsupinfpsi}, \eqref{limsupinfMofx}
arise from. Moreover, we shall provide a more general conjecture which contains
both Montgomery's conjecture and the author's conjecture as special cases.  Furthermore,  we use the argument  from Lamzouri's recent work \cite{La} to prove a conditional result which gives evidence towards the conjecture.
Since the work of Wintner  \cite{Wi}, it has been known that  the distribution of the error term in the prime number theorem is related to the {\it Linear Independence conjecture}
(denoted LI). This is the conjecture
 that the positive ordinates of the zeros of the Riemann zeta function are linearly independent over the rational numbers. 
 A history and discussion of LI  may be found in the introduction in \cite{MN0}. 
 The basic idea is that an error term such as $M(x)$ has an explicit formula of the shape  \cite[Theorem 14.27, p. 372]{Ti}  
 \begin{equation}
  \label{Mxexplicit}
   \frac{M(e^y)}{\sqrt{e^y}} = 2  \sum_{\gamma_n \le e^{y}} \frac{1}{|\rho_n \zeta'(\rho_n)|} 
   \cos \Big(2 \pi i \Big( y \frac{\gamma_{n}}{2 \pi}+\beta_{n} \Big) \Big)
  +o(1)
 \end{equation}
 where $(\gamma_n)_{n \in \mathbb{N}}$ denote the positive ordinates of zeta, 
  $(\beta_n)_{n \in \mathbb{N}}$ is a certain real sequence, and $o(1)$ approaches $0$ as $y \to \infty$. 
 One expects under the assumption of LI that the distribution of values of this sum is the same as the distribution of the random sum 
 \begin{equation}
   \label{Mrandom}
     {\bf X}_{M}  =  2 \sum_{n=1}^{\infty}  \frac{1}{|\rho_n \zeta'(\rho_n)|}  \cos(2 \pi \theta_n)
 \end{equation}
 where  $\theta_n \in [0,1]$ are I.I.D. random variables
 \footnote{Throughout this article I.I.D. is an abbreviation for
 independent identically distributed.}.  
 The LI conjecture can be used to prove results about the limiting distribution $\mu$ of $(\psi(e^y)-e^y)/e^{y/2}$ and related prime counting functions (see \cite{RS}). 
 For instance, it can be used to compute  $\mu(B)$ for certain Borel sets. 
 Montgomery \cite{Mo3} seems to be the first to connect the error term in the prime number theorem to large deviations of sums 
 of independent random variables.  This was also explored in Monach's thesis \cite{Mon}. 
  However, in order to prove omega results concerning prime number error terms it
 seems a stronger version of LI is required.  In  \cite[p.483]{MV} a weaker form of the following conjecture is proposed. 
\begin{conj}[Effective LI conjecture: ELI]\label{ELI}
Let $(\gamma_n)_{n \in \mathbb{N}}$ denote the sequence of  ordinates of the non-trivial zeros of the Riemann zeta function. For every $\ep>0$ there exists a positive constant $C_{\ep}$ (possibly not effective) such that for all real numbers $T\geq 2$ we have 
\begin{equation}\label{ELIsum}\Big|\sum_{0<\gamma_n \leq T} \ell_{n} \gamma_n \Big|\geq C_{\ep} e^{-T^{1+\ep}},
\end{equation}
where the $\ell_{n}$ are integers, not all zero, such that $|\ell_{n}|\leq N(T),$ and $N(T)$ is the number of non-trivial zeros of $\zeta(s)$ with imaginary part in $[0, T]$.
\end{conj}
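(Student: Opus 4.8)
\textbf{Remarks toward Conjecture~\ref{ELI}.}
I should say at the outset that Conjecture~\ref{ELI} is not something one can expect to prove with current technology: an effective lower bound for rational linear forms in the ordinates $\gamma_n$ is far beyond present reach, and it is not even known whether a single $\gamma_n$ is irrational, let alone whether $\gamma_1$ and $\gamma_2$ are $\Q$-linearly independent. What I would present in place of a proof is the counting heuristic that isolates the exponent $1+\ep$ and makes the bound plausible, the probabilistic model in which the analogue of \eqref{ELIsum} is a theorem, and an honest identification of the step at which a genuine argument would have to begin.

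The heuristic has three steps. First I would count the competing linear forms. By the Riemann--von Mangoldt formula $N(T)\sim\frac{T}{2\pi}\log\frac{T}{2\pi}$, so the number of admissible coefficient vectors $(\ell_n)_{0<\gamma_n\le T}$ with $|\ell_n|\le N(T)$, not all zero, is
\[
 (2N(T)+1)^{N(T)}=\exp\!\big((1+o(1))\,N(T)\log N(T)\big)=\exp\!\Big((1+o(1))\,\tfrac{T}{2\pi}(\log T)^2\Big).
\]
Second I would bound the range: each $\big|\sum_n\ell_n\gamma_n\big|\le T\,N(T)^2=\exp(O(\log T))$, so all of these values lie in one interval whose length is polynomial in $T$, hence logarithmically negligible against the count. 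Third comes the pigeonhole prediction: if the finite set $\{\gamma_n:0<\gamma_n\le T\}$ behaves like a generic set of reals, the values $\sum_n\ell_n\gamma_n$ are approximately equidistributed in that interval, so the least nonzero value over all competitors should be of size about (range)$/$(count), that is, at least $\exp\!\big(-(1+o(1))\,\tfrac{T}{2\pi}(\log T)^2\big)$. Since $\tfrac{T}{2\pi}(\log T)^2=o(T^{1+\ep})$ for every $\ep>0$, the bound $e^{-T^{1+\ep}}$ in \eqref{ELIsum} is comfortably weaker than this predicted minimum; the spare factor $T^{\ep}$ is exactly what one holds in reserve for irregularities in the vertical distribution of the zeros, and is also why $C_\ep$ is permitted to be ineffective.

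To turn the third step into a theorem, the plan is to pass to the random model in which the $\gamma_n$ are replaced by independent random variables on the relevant ranges and to run a Borel--Cantelli argument: any nonzero coefficient vector forces at least one uniform term of size $\gg 1$, so the probability that $\big|\sum_n\ell_n\gamma_n\big|<e^{-T^{1+\ep}}$ holds for some admissible vector at height $T$ is $\ll\,$(count)$\times e^{-T^{1+\ep}}=\exp\!\big((1+o(1))\tfrac{T}{2\pi}(\log T)^2-T^{1+\ep}\big)$, which is summable over integers $T$; hence almost surely \eqref{ELIsum} holds for all sufficiently large $T$. This establishes nothing about $\zeta(s)$, because it uses genuine independence; the parallel computation under the Linear Independence conjecture would recover the correct limiting distribution of $(\psi(e^y)-e^y)e^{-y/2}$, but not a diophantine lower bound of the strength needed here.

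The main obstacle --- the step I do not see how to take --- is the unconditional effective lower bound itself. The only machinery known to produce such bounds for linear forms is transcendence theory of Baker type, which requires the entries of the form to be logarithms of algebraic numbers, or periods of a controlled shape; the $\gamma_n$ are conjecturally transcendental and $\Q$-linearly independent, yet are not known to be algebraic, transcendental, or even irrational, so no Baker-type input applies. I would therefore present Conjecture~\ref{ELI} as a working hypothesis: the heuristic above and its probabilistic shadow are the evidence for it, and the conditional lower bounds for prime number error terms in this paper rest on it --- and later on its generalization GELI --- rather than on any proof of it.
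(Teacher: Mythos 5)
This statement is a conjecture, not a theorem: the paper itself offers no proof of it, and instead cites Damien Roy (personal communication) and Lamzouri~\cite[Conjecture~1.1]{La0} for heuristic arguments in its favour, including the speculation that the right-hand side of \eqref{ELIsum} can likely be strengthened to $\exp\big(-(\tfrac{1}{2\pi}+o(1))T(\log T)^2\big)$. You have correctly recognized that no proof is possible here and that the honest thing to present is a heuristic, and the counting/pigeonhole heuristic you give is exactly the one implicit in the Roy--Lamzouri prediction: $N(T)\sim\frac{T}{2\pi}\log T$ competitors to the power $N(T)$ gives $\exp\big((1+o(1))\frac{T}{2\pi}(\log T)^2\big)$ coefficient vectors crowded into a polynomially bounded interval, yielding a predicted minimum gap of size $\exp\big(-(1+o(1))\frac{T}{2\pi}(\log T)^2\big)$, which comfortably dominates $e^{-T^{1+\ep}}$. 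Your random-model Borel--Cantelli calculation and the remark that Baker-type transcendence methods cannot apply (since the $\gamma_n$ are not known to be logarithms of algebraic numbers, nor even irrational) are accurate and make explicit precisely where any genuine argument would have to begin. In short, your response is the appropriate one for this item and is consistent with the paper's treatment of Conjecture~\ref{ELI} as an unproven working hypothesis.
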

Damien Roy (personal communication, 2018)
and 
Lamzouri   \cite[Conjecture 1.1]{La0} have provided heuristic arguments towards this conjecture. Further, they have speculated that 
the right hand side of \eqref{ELI} may be replaced by  $\exp(-(\frac{1}{2 \pi} + o(1)) T (\log T)^2)$.
Recently, Lamzouri \cite{La} used the ELI to prove one of the inequalities in Montgomery's Conjecture \ref{Montgomeryconjecture}. 
He showed that ELI implies
\begin{equation}
  \label{limsuppsilb}
     \limsup_{x \to \infty} \frac{\psi(x) -x}{\sqrt{x} (\log \log \log x)^2}  \ge \frac{1}{2 \pi} 
\end{equation}
and that ELI and several conditions on the size of $|\zeta'(\rho_n)|^{-1}$ imply there exists $c_0 >0$ such that 
\[
    \limsup_{x \to \infty} \frac{M(x)}{\sqrt{x} (\log \log \log x)^{\frac{5}{4}}}  \ge c_0. 
\]
Note that in \cite[p.484]{MV} it is claimed that Monach and Montgomery proved a weaker form of ELI (all $\ell_n$ are absolutely bounded)
implies \eqref{limsuppsilb}.  However, the proof does not seem to be in the literature and Lamzouri has doubts whether
this can be done only assuming the weaker form of the ELI conjecture
(see \cite[Remark 1.3]{La}).

Finally, to end the introduction we observe that the aformentioned arguments are not specific to the ordinates of the zeros of the zeta function. 
The ideas and techniques apply to more general sequences.  It is convenient to rephrase the ELI conjecture for arbitrary non-decreasing sequences
of positive numbers. 

\begin{conj}[Generalized Effective LI conjecture: GELI]\label{GELI}
Let $\bm{\lambda} = (\lambda_n)_{n=1}^{\infty}$ be a non-decreasing sequence  of positive numbers which is linearly independent over the rationals. We say that GELI holds for $\bm{\lambda}$ if the following holds: For every $\ep>0$ there exists a positive constant $C_{\ep}$ (possibly not effective) such that for all real numbers $T\geq 2$ we have 
\begin{equation}\label{GELIlbd}\Big|\sum_{0<\lambda_n \leq T} \ell_{n} \lambda_n \Big|\geq C_{\ep} e^{-T^{1+\ep}},
\end{equation}
where the $\ell_{n}$ are integers, not all zero, such that $|\ell_{n}|\leq N_{\bm{\lambda}}(T),$ and 
\begin{equation}
  \label{Nlambda}
N_{\bm{\lambda}}(T)=
\# \{ n \ | \ \lambda_n \le T \}
\end{equation}
is the counting function associated to $\bm{\lambda}= (\lambda_n)_{n=1}^{\infty}$. 
\end{conj}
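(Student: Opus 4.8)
The plan is to carry out the pigeonhole counting heuristic that underlies \eqref{GELIlbd}, in the spirit of the arguments of Roy and of Lamzouri, and to isolate the one ingredient that turns it into a genuine open problem. Fix $\ep>0$ and $T\ge 2$, and write $N=N_{\bm{\lambda}}(T)$ for brevity. First I would introduce the finite set of values
\[
  \mathcal{V}_T=\Bigl\{\,\sum_{0<\lambda_n\le T}\ell_n\lambda_n\ :\ \ell_n\in\Z,\ |\ell_n|\le N\,\Bigr\}.
\]
Since $\bm{\lambda}$ is linearly independent over $\Q$, the map $(\ell_n)\mapsto\sum_{0<\lambda_n\le T}\ell_n\lambda_n$ is injective on the box $\{\,|\ell_n|\le N\,\}$, so $\#\mathcal{V}_T=(2N+1)^{N}$; and every element of $\mathcal{V}_T$ has absolute value at most $N\cdot N\cdot T=N^{2}T$, since there are at most $N$ nonzero summands, each with $\lambda_n\le T$ and $|\ell_n|\le N$. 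Hence $\mathcal{V}_T$ consists of $(2N+1)^{N}$ distinct real numbers all lying in an interval of length $2N^{2}T$.

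The second step is the heuristic input: for a sequence $\bm{\lambda}$ with no exceptional algebraic structure one expects the points of $\mathcal{V}_T$ to lie in $[-N^{2}T,\,N^{2}T]$ roughly like $(2N+1)^{N}$ ``generic'' points, so that the distance from $0$ to the nearest nonzero element of $\mathcal{V}_T$ is at least a constant times the mean spacing $2N^{2}T\,(2N+1)^{-N}$. Granting this,
\[
  \min_{\substack{(\ell_n)\ne 0\\ |\ell_n|\le N}}\ \Bigl|\sum_{0<\lambda_n\le T}\ell_n\lambda_n\Bigr|\ \gg\ \frac{N^{2}T}{(2N+1)^{N}}=\exp\!\bigl(-N\log N+O(N+\log(N^{2}T))\bigr),
\]
and the right-hand side exceeds $C_{\ep}e^{-T^{1+\ep}}$ as soon as $N_{\bm{\lambda}}(T)\log N_{\bm{\lambda}}(T)\le T^{1+\ep}$ for all large $T$ --- which holds, for instance, whenever $N_{\bm{\lambda}}(T)\ll_{\ep}T^{1+\ep/2}$ --- thereby yielding \eqref{GELIlbd}. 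Specializing $\bm{\lambda}$ to the ordinates of the zeros of $\zeta(s)$, for which $N(T)\sim\tfrac{T}{2\pi}\log\tfrac{T}{2\pi}$, the leading exponent becomes $N\log N\sim\tfrac{1}{2\pi}T(\log T)^{2}$, which recovers ELI (Conjecture~\ref{ELI}) and matches the refined prediction $\exp\bigl(-(\tfrac{1}{2\pi}+o(1))T(\log T)^{2}\bigr)$ recorded there.

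The main obstacle is precisely the equidistribution claim in the second step: it asserts a uniform lower bound for the integer linear forms $\sum_{0<\lambda_n\le T}\ell_n\lambda_n$ over all nonzero coefficient vectors of size at most $N_{\bm{\lambda}}(T)$, a quantitative strengthening of the linear independence of $\bm{\lambda}$ that is open even for the ordinates of $\zeta(s)$, where it is far stronger than LI. For a fixed, arithmetically defined $\bm{\lambda}$ there is no source of averaging, so the claim cannot be reduced to a probabilistic estimate; a proof would instead require effective transcendence input --- lower bounds for $|\sum\ell_n\lambda_n|$ of Baker's type, but uniform over coefficients as large as $N_{\bm{\lambda}}(T)$ --- and nothing of that strength is presently available. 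I would therefore offer the argument above as the justification for adopting \eqref{GELIlbd} as a conjecture rather than as a theorem: it pins down the shape $e^{-T^{1+\ep}}$, and for $\zeta(s)$ the constant $\tfrac{1}{2\pi}$, as precisely what a sequence of the relevant counting density with no arithmetic conspiracies would satisfy, isolating the simultaneous Diophantine lower bound as the single ingredient that must be supplied by hand.
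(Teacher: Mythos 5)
The statement you were asked to prove is a conjecture, not a theorem: the paper gives no proof (and none is known), pointing only to the heuristic arguments of Roy and of Lamzouri cited just before Conjecture~\ref{ELI} in support of the special case where $\bm{\lambda}$ is the sequence of zeta ordinates. You correctly recognize this, and rather than manufacturing a false proof you supply exactly the kind of pigeonhole reasoning that motivates the shape of the bound: linear independence makes the map $(\ell_n)\mapsto\sum_{0<\lambda_n\le T}\ell_n\lambda_n$ injective on the box $|\ell_n|\le N$, giving $(2N+1)^{N}$ distinct values of the linear form, all confined to $[-N^{2}T,N^{2}T]$; the generic mean spacing $\asymp N^{2}T\,(2N+1)^{-N}=\exp(-N\log N+O(N+\log(N^{2}T)))$ then produces a lower bound of the form $e^{-T^{1+\ep}}$ precisely when $N_{\bm{\lambda}}(T)\log N_{\bm{\lambda}}(T)=O(T^{1+\ep})$. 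That is consistent with how GELI is actually deployed in the paper, since Assumption~4 \eqref{ass4} forces $N_{\bm{\lambda}}(T)\asymp T\log T$ and hence $N\log N\asymp T(\log T)^{2}=o(T^{1+\ep})$, and in the zeta case it recovers the sharpened prediction $\exp\bigl(-(\tfrac{1}{2\pi}+o(1))T(\log T)^{2}\bigr)$ recorded after Conjecture~\ref{ELI}. Your diagnosis of the irreducible gap is also accurate: the required uniform lower bound for integer linear forms with coefficients as large as $N_{\bm{\lambda}}(T)$ is a Baker-type statement of a strength far beyond current technology, which is why the paper states \eqref{GELIlbd} as a hypothesis rather than a theorem. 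There is no proof in the paper against which to compare your argument; your heuristic is the appropriate style of justification and is aligned with the references the paper cites.
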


The reason we consider the effective LI for more general sequences is that we can use it to study error terms associated to the zeros of other $L$-functions.  \\

\noindent {\bf Conventions and Notation}.  
Given two functions $f(x)$ and $g(x)$, we shall interchangeably use the notation  $f(x)=O(g(x))$, $f(x) \ll g(x)$, and $g(x) \gg f(x)$  to mean there exists $M >0$ such that $|f(x)| \le M |g(x)|$ for all sufficiently large $x$. We write $f(x) \asymp g(x)$ to mean that the estimates $f(x) \ll g(x)$ and $g(x) \ll f(x)$ simultaneously hold. We write $f(x) \sim g(x)$ to mean $\lim_{x \to \infty} \frac{f(x)}{g(x)}=1$. We write $f(x)=o(g(x))$ to mean $\lim_{x \to \infty} \frac{f(x)}{g(x)}=0$. 
Given a real random variable  $X$ defined on probability space $(\Omega, \mathcal{B},P)$ we define its expected value by
\[
  \mathbb{E}(X) := \int_{\Omega} X dP
\]
and for a Borel set $B$
\[
   P(X \in B) := P(X^{-1}(B)).
\]

 The zeros of the Riemann zeta function are denoted 
by  $(\rho_n)_{n \in \mathbb{N}}$ where we shall write $\rho_n =\beta_n +i \gamma_n$
and we shall denote $\bm{\gamma}=(\gamma_n)_{n \in \mathbb{N}}$ to be the sequence of its ordinates. 
Throughout this article we will be dealing with two general sequences:
\begin{equation}
   \label{lambdan}
   \bm{\lambda}=(\lambda_n)_{n \in \mathbb{N}} \text{ is a non-decreasing sequence of positive numbers which tend to infinity}
 \end{equation}
 and
\begin{equation}
  \label{rn}
 \mathbf{r}=(r_n)_{n \in \mathbb{N}} \text{ is a complex sequence}.
 \end{equation}
 
 We now introduce the various theorems and conjectures that will be given in this article. 
Following Lamzouri in \cite{La0}, we shall associate to sequences $\bm{\lambda}=(\lambda_n)_{n \in \mathbb{N}}$ and $\mathbf{r}=(r_{n})_{n \in \mathbb{N}}$
a sum of the shape
$$ \Phi_{X,  \bm{\lambda}, \mathbf{r}}(x):= \Re\left(\sum_{0<\lambda_n \leq X} x^{i\lambda_n} r_{n}\right).$$
A key goal is to exhibit large and small values of this function. In order to do so,  we will impose a number of conditions on the sequences 
$\bm{\lambda}=(\lambda_n)_{n \in \mathbb{N}}$ and $\mathbf{r}=(r_{n})_{n \in \mathbb{N}}$. \\

\noindent \textbf{Assumption 1}:  There exist  positive constants $\alpha_{+}, \alpha_{-}, A$ such that 
\begin{equation}
  \label{ass1}
  \alpha_{-}(\log T)^A  \le \sum_{0<\lambda_n \leq T}  |r_{n}| \le \alpha_{+}(\log T)^A
\end{equation}
for $T$ sufficiently large. 

It will be useful to have the following asymptotic version of Assumption 1. \\

\noindent \textbf{Assumption 1B}:  There exist  positive constants $\alpha, A$ such that 
\begin{equation}
  \label{ass1b}
  \sum_{0<\lambda_n \leq T}  |r_{n}| \sim \alpha (\log T)^A
\end{equation}
for $T$ sufficiently large.

  \noindent \textbf{Assumption 2}:
Let $A$ be the constant in Assumption 1. As $T\to \infty$ we have 
\begin{equation}
 \label{ass2}
\sum_{0<\lambda_n \leq T} \lambda_n |r_{n}|=o\big(T(\log T)^A\big).
\end{equation}

 \noindent \textbf{Assumption 3}: There exists a constant  $0 \leq \theta <2$ such that 
\begin{equation}
  \label{ass3}
\sum_{0<\lambda_n\leq T} \lambda_n^2 |r_{n}|^2 \ll T^{\theta}.
\end{equation}

\noindent \textbf{Assumption 4}: 
There exist positive constants $\kappa_{-}, \kappa_{+}$ such that for
$T \ge 1$, we have the bounds
\begin{equation}
  \label{ass4}
   \kappa_{-} T (\log T) \le  N_{ \bm{\lambda}}(T)  \le  \kappa_{+} T (\log T). 
\end{equation}

\noindent \textbf{Assumption 5}: For $T \ge 0$, 
\begin{equation}
\label{ass5}
 \sum_{T < \lambda_n \le T+1} 1 \ll \log (T+2)
 \end{equation}
 
 For context, we explain what each of these assumptions mean in the 
 special case of $M(x)$ where $r_n=\frac{1}{\rho_n \zeta'(\rho_n)}$.  Assumptions 1 and 1B deal with bounds and asymptotics for $\sum_{0 < \gamma_n \le T} \frac{1}{|\rho_n \zeta'(\rho_n)|}$.   Assumption 2 and 3 will correspond to bounds for $J_{-1/2}(T)$ and $J_{-1}(T)$ (see \eqref{JkT} below for definitions).  Assumptions 4 and 5 correspond to bounds for the number of ordinates $\gamma_n$ in a long and in a short interval.  A key point of this article is to apply arguments that apply to any sequence and it is useful to keep this example in mind.  Note that the classical prime number example $\psi(x)$ is less relevant to the general case since the corresponding coefficients $|r_n| = |\rho_n|^{-1}$ decrease and they are easier to treat.  
 
 With the above assumptions in hand, we can state our main theorem which produces large and small values of $ \Phi_{X^2,  \bm{\lambda}, \mathbf{r}}(x)$.
  \begin{thm}\label{generalsumthm}
Assume $\bm{\lambda}= (\lambda_n)_{n \in \mathbb{N}} $ satisfies the Generalized Effective LI conjecture (Conjecture \ref{GELI}).
Assume the sequences $\bm{\lambda}= (\lambda_n)_{n \in \mathbb{N}} $  and  $ \mathbf{r}=(r_n)_{n \in \mathbb{N}}$
  satisfy Assumptions 1-5, \eqref{ass1},\eqref{ass2}, \eqref{ass3}, \eqref{ass4}, \eqref{ass5}.
 Let $X$ be large and let  $\e \in (0,\alpha_{-})$ where $\alpha_{-}$ is the constant  in \eqref{ass1}.  Then
\begin{equation}\label{MaxOmega}\max_{x\in [2, X]}
 \Phi_{X^2,  \bm{\lambda}, \mathbf{r}}(x)
 \geq (\alpha_{-}-\e) (\log\log\log X)^A
\end{equation}
and 
\begin{equation}\label{MinOmega}\min_{x\in [2, X]} \Phi_{X^2,  \bm{\lambda}, \mathbf{r}}(x) \leq -(\alpha_{-}-\e) (\log\log\log X)^A.
\end{equation}
 \end{thm}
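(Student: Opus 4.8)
The plan is to adapt Lamzouri's resonance-type argument from \cite{La}. The key idea is to choose a special point $x$ of the form $x = e^{2\pi y}$ (so $x^{i\lambda_n} = e^{2\pi i y \lambda_n}$) in the allowed range, chosen so that $e^{2\pi i y \lambda_n}$ is simultaneously close to $1$ for \emph{all} $n$ with $\lambda_n \le Y$, where $Y$ is a truncation parameter that must be taken as large as possible while keeping $x$ below $X$. If we can arrange $\| y \lambda_n \| < \delta$ for all $\lambda_n \le Y$ (with $\|\cdot\|$ denoting distance to the nearest integer), then $\Re(x^{i\lambda_n}) \ge \cos(2\pi\delta)$, so
\begin{equation}
 \label{proposal-mainterm}
 \Phi_{X^2, \bm{\lambda}, \mathbf{r}}(x) \ge \cos(2\pi \delta) \sum_{0 < \lambda_n \le Y} |r_n| - \Big| \sum_{Y < \lambda_n \le X^2} x^{i\lambda_n} r_n \Big|,
\end{equation}
and by Assumption 1 the first sum is at least $\alpha_- (\log Y)^A (1 + o(1))$. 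To obtain $(\log\log\log X)^A$ on the right-hand side of \eqref{MaxOmega}, the target is to take $\log Y \asymp \log\log\log X$, i.e. $Y \asymp (\log\log X)^{c}$ roughly; more precisely one wants $Y$ as large as possible subject to the Diophantine constraint being solvable inside $[2,X]$.

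The existence of a good $y$ comes from the pigeonhole / Dirichlet-box principle applied in dimension $N_{\bm{\lambda}}(Y)$: the vector $(y\lambda_n)_{\lambda_n \le Y}$ must land in a small box mod $\mathbb{Z}^{N_{\bm{\lambda}}(Y)}$. By a standard pigeonhole argument, a suitable $y$ exists in an interval of length at most roughly $\delta^{-N_{\bm{\lambda}}(Y)}$, so one needs $\log X \gg \delta^{-N_{\bm{\lambda}}(Y)}$; combined with Assumption 4, $N_{\bm{\lambda}}(Y) \asymp Y \log Y$, this forces $Y \log Y \ll \log\log X$, hence $\log Y \asymp \log\log\log X$ as required. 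Here is exactly where \textbf{GELI} is used, rather than mere linear independence: to pass from "a good $y$ exists somewhere" to "a good $y$ exists in $[2,X]$" with the \emph{quantitatively correct} relationship between $Y$ and $X$, one needs the effective lower bound \eqref{GELIlbd} on integer combinations $\sum \ell_n \lambda_n$ to control the discrepancy of the sequence $(y\lambda_n)$ and to ensure the pigeonhole box argument is essentially optimal; following Lamzouri, GELI lets one replace the trivial box count by one that is efficient up to $e^{T^{\ep}}$ factors, and the $\ep$ is absorbed harmlessly.

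The tail term in \eqref{proposal-mainterm} must be shown to be $o((\log\log\log X)^A)$. This is where Assumptions 2, 3 and 5 enter. One splits $\sum_{Y < \lambda_n \le X^2} x^{i\lambda_n} r_n$ and estimates it either by a second-moment (mean value) bound over the choice of $x$ — which is where the $L^2$ bound for almost periodic functions advertised in the abstract gets used, relying on Assumption 3 to bound $\sum \lambda_n^2 |r_n|^2 \ll T^\theta$ with $\theta < 2$ — or by a direct dyadic estimate using Assumption 5 to count $\lambda_n$ in short intervals together with partial summation against Assumption 1. Assumption 2 handles the range just above $Y$, where one integrates $x^{i\lambda_n}$ in $x$ and picks up a factor $\lambda_n^{-1}$; the bound $\sum \lambda_n |r_n| = o(T(\log T)^A)$ ensures the resulting contribution is negligible after dividing by the length of the $x$-interval. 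The minimum statement \eqref{MinOmega} follows identically by choosing $y$ so that $\| y\lambda_n - \tfrac12 \| < \delta$ for all $\lambda_n \le Y$, making $\Re(x^{i\lambda_n}) \le -\cos(2\pi\delta)$.

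The main obstacle I anticipate is the tail estimate, specifically controlling $\big|\sum_{Y < \lambda_n \le X^2} x^{i\lambda_n} r_n\big|$ uniformly enough that it does not destroy the main term. The subtlety is that $Y$ is only of size $(\log\log X)^{o(1)}$ while the sum runs up to $X^2$, so there is an enormous range of $\lambda_n$ to control, and the coefficients $|r_n|$ do not decay (in the $M(x)$ case they are essentially $|\rho_n \zeta'(\rho_n)|^{-1}$, only bounded on average). Getting a pointwise bound at the \emph{specific} resonating $x$ — rather than an average bound — seems to require either exploiting cancellation via the almost-periodic $L^2$ machinery combined with a Chebyshev/union-bound argument over a net of candidate $x$'s, or a delicate interplay between the Diophantine structure forced by GELI and the growth of the partial sums of $|r_n|$. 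Managing the interaction of these two — the choice of net fine enough for the resonance but coarse enough for the $L^2$ bound to apply — is the technical heart of the argument.
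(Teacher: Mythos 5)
Your proposal takes a genuinely different route from the paper, and unfortunately it has a gap that I don't think your sketched fixes can close. The paper does \emph{not} use a Dirichlet--box/pigeonhole resonance to produce a single good point $x$. Instead it proceeds in three steps: (a) a moment-generating-function argument (Proposition \ref{laplaceprop}, where GELI is used via Lemma \ref{lemmacosineint} to match $\frac1X\int_1^X\exp(sF(t,T))\,dt$ with its probabilistic analogue), followed by a lower bound for the random Laplace transform via the Bessel function $I_0$, which together give Proposition \ref{measureprop}: the set of $t\in[1,X]$ with $F_{\bm{\lambda},\mathbf{r}}(t,T)\geq(\alpha_--\e_1)(\log T)^A$ has measure $\gg X e^{-c_2T\log T}$ for $T\leq(\log X)^{1-\e}$; (b) a Fejer-kernel smoothing (Lemma \ref{LemFejer}), which converts the pointwise $T$-truncated sum $F(t,T)$ into a smoothed average of the $X$-truncated sum $F(t+u,X)$ with error $o((\log T)^A)$ via Assumption 2; and (c) an $L^2$ estimate (Lemma \ref{LemSecondMomentError}, relying on Proposition \ref{MengProp}) showing that the Fejer-smoothed difference $F(t+u,e^{2X})-F(t+u,X)$ exceeds $1$ only on a set of $t$ of measure $\ll X^{1-\eta}$. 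The final step intersects the measure-$\gg X^{1-o(1)}$ ``good'' set from (a) with the complement of the measure-$\ll X^{1-\eta}$ ``bad'' set from (c).

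The concrete gap in your approach is the tail estimate, and it is not merely technical --- with your parameter choices the set on which the tail is controlled is too small to meet your resonance set. Your resonance set $\{y:\|y\lambda_n+\beta_n/2\pi\|<\delta\ \forall\,\lambda_n\leq Y\}$ has (under LI/equidistribution, which is the best one can hope for) density $\asymp(2\delta)^{N_{\bm{\lambda}}(Y)}$, i.e., measure $\asymp(\log X)(2\delta)^{cY\log Y}$ by Assumption 4. Meanwhile the $L^2$ bound on the raw tail $\sum_{Y<\lambda_n\leq X^2}x^{i\lambda_n}r_n$ (Proposition \ref{MengProp}) gives a Chebyshev bad set of measure $\asymp(\log X)\,Y^{-\eta}$. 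Comparing, you need $(2\delta)^{cY\log Y}\gg Y^{-\eta}$, i.e., $Y\log Y\log(1/2\delta)\ll\eta\log Y$, which forces $Y\ll1$: the good set is exponentially small in $Y$ while the bad set is only polynomially small. Your truncation parameter $Y\asymp\log\log X/\log\log\log X$ makes the bad set contain essentially all of the $y$-range. The ``union bound over a net of candidate $x$'s'' cannot repair this: the net would need $\delta^{-N_{\bm{\lambda}}(Y)}$ points, and a union bound over that many points with an $L^2$ bound of that strength loses exactly the same factor. What actually saves the argument in the paper is the \emph{intermediate truncation}: the Fejer smoothing bridges from $T$ to $X$ (not to $e^{2X}$), so the $L^2$ estimate only has to handle $\lambda_n\in(X,e^{2X}]$, yielding a bad set of measure $X^{1-\eta}$, which is tiny compared to the good set's $X^{1-o(1)}$. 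Without a comparable smoothing step your decomposition \eqref{proposal-mainterm}, with the sharp cutoff at $Y$ and tail running all the way to $X^2$, cannot work with these assumptions, because the coefficients $|r_n|$ do not decay and the tail is not uniformly small --- it is only small in mean square, and only after the correct splitting point.

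A secondary issue: even repairing the tail, the naive resonance gives a constant $\cos(2\pi\delta)\alpha_-$ for fixed $\delta$, and pushing $\delta\to0$ to approach $\alpha_-$ shrinks the good set by another factor $(2\delta)^{N_{\bm{\lambda}}(Y)}$. The paper avoids this trade-off: in Proposition \ref{measureprop} the threshold constant $(1-\e_2)\alpha_-$ and the measure exponent $c_2=c_0(\alpha_+-c_3)$ decouple --- one can push $\e_2\to0$ while $c_2$ stays bounded --- precisely because the moment-generating-function method is more efficient than requiring all phases to lie in a small box. This careful use of the $I_0$ lower bound (Lemma \ref{I0lblemma}) and the choice of $s$ is the reason the paper's Theorem \ref{generalsumthm} can reach the sharp constant $\alpha_-$.
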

 \begin{rem}
 The main ideas for this result are due to Lamzouri \cite{La0}. 
 This theorem is a modification of Lamzouri's Theorem 1.5 of \cite{La0} which deals with the sequence of zeta zero ordinates
 $\bm{\gamma}=(\gamma_n)_{n \in \mathbb{N}}$.
 The main differences are that this theorem is stated for general sequences $\bm{\lambda}= (\lambda_n)_{n \in \mathbb{N}} $ 
 and  the constant $c_1$ in the cited theorem is given to be $\alpha_{-}$ where $\alpha_{-}$ is the lower bound constant in 
 Assumption 1 \eqref{ass1}. 
 This was determined by examining closely the proofs of Proposition 2.1, Proposition 3.1, and Theorem 1.5 of  \cite{La0}.   The key point is to use a sharper lower bound for the  modified Bessel function $I_0$.
 Another new input in this result is Assumption 3 \eqref{ass3} above. Previously,  in \cite{La0} the stronger
 condition  $\theta < 3-\sqrt{3} =1.26 \ldots$ was assumed. This value comes from the proof of 
  Theorem 1.2 (b) of \cite{ANS}.
  The weakening of this condition is achieved by using an idea of Meng \cite{Me}
 in his study of the distribution $k$-free numbers and makes use of Proposition \ref{MengProp} below.  
\end{rem}
 
 We now discuss the applications of this conjecture to extreme values of $M(x)$ and $L(x)=\sum_{n \le x} \lambda(n)$
 where $\lambda(n)$ is the Liouville function.  From their explicit formulae (see \eqref{Mxexplicit}, \eqref{Lxexplicit}), these functions are intimately related to the numbers $\zeta'(\rho_n)^{-1}$.  It is convenient to define for $k \in \mathbb{R}$, 
 \begin{equation}
 \label{JkT}
 J_k(T) = \sum_{0 < \gamma_n < T}
 |\zeta'(\rho_n)|^{2k}.
 \end{equation} 
 Hughes, Keating, and O'Connell \cite{HKO}  made the following conjecture. 
\begin{conj}[Hughes-Keating-O'Connell] \label{HKO}
Let $k \in \mathbb{C}$ with  $\Re(k) > -\frac{3}{2}$. Then 
\begin{equation}
  \label{HKOconj}
  J_{k}(T) = \sum_{0 < \gamma_n < T} |\zeta'(\rho_n)|^{2k} 
  \sim 
   \frac{G^2(2+k)}{G(3+2k)}  a(k)
  \frac{T}{2 \pi}   \Big(\log  \frac{T}{2 \pi} \Big)^{(k+1)^2} \, \, \, \, 
\end{equation}
where $G(\cdot)$ is the Barnes $G$-function defined by 
\[
   G(z+1) =(2 \pi)^{z/2}\exp \left(-\frac{1}{2}(z^{2} + \gamma z^{2} + z)
   \right)
   \prod_{n=1}^{\infty} \Big(1 + \frac{z}{n} \Big)^{n} e^{-z + z^{2}/2n}
\] 
and 
\begin{equation}
  \label{adefn}
   a(z) = \prod_{p} \Big(1-\frac{1}{p} \Big)^{x^2} \sum_{m=0}^{\infty} \Big( \frac{\Gamma(m+z)}{m! \Gamma(z)} \Big)^2 p^{-m}.
\end{equation}
\end{conj}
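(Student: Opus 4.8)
The final statement is a \emph{conjecture} (that of Hughes, Keating and O'Connell), so in place of a proof I sketch the random matrix heuristic that produces \eqref{HKOconj}; only its random matrix ingredient can presently be made rigorous. \textbf{Step 1: the random matrix model.} Following the Keating--Snaith philosophy, one models the Riemann zeta function at height near $T$ by the characteristic polynomial $\Lambda_A(z)=\det(I-zA^{*})$ of a Haar-distributed $N\times N$ unitary matrix $A$ (the Circular Unitary Ensemble, CUE), with matrix size chosen so that the mean eigenvalue density matches the mean zero density, i.e.\ $N=\lfloor \log(T/2\pi)\rfloor$. Under this dictionary the ordinates $\gamma_n$ correspond to the eigenangles $\theta_n$ of $A$ and $\zeta'(\rho_n)$ corresponds to $\Lambda_A'(e^{i\theta_n})$, up to the arithmetic normalisation of Step~3; hence $\sum_{0<\gamma_n<T}|\zeta'(\rho_n)|^{2k}$ should behave like the number of zeros $N(T)\sim \frac{T}{2\pi}\log\frac{T}{2\pi}$ times the CUE mean of $|\Lambda_A'|^{2k}$ at an eigenangle times an arithmetic correction.

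\textbf{Step 2: the characteristic polynomial moment.} The plan is to evaluate
\[
  M_N(k):=\frac{1}{N}\,\mathbb{E}_{U(N)}\!\left[\sum_{n=1}^{N}\big|\Lambda_A'(e^{i\theta_n})\big|^{2k}\right].
\]
One rewrites the eigenvalue sum as an angular integral, applies Weyl's integration formula together with the reproducing-kernel (Christoffel--Darboux) structure of the CUE correlation functions, and is reduced to a Toeplitz/Hankel-type determinant whose large-$N$ asymptotics are governed by Barnes $G$-functions. This yields, for $\Re(k)>-\tfrac32$,
\[
  M_N(k)\sim \frac{G^2(k+2)}{G(2k+3)}\,N^{(k+1)^2},
\]
the range $\Re(k)>-\tfrac32$ being exactly where the relevant integral converges at the eigenvalue-spacing scale (equivalently, where the pole of $1/G(2k+3)$ is avoided). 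This step is a theorem of Hughes--Keating--O'Connell and is the only genuinely rigorous ingredient.

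\textbf{Step 3: the arithmetic factor and assembly.} Since $\Lambda_A$ carries no prime information, one multiplies $M_N(k)$ by an Euler product $a(k)$, the arithmetic factor in \eqref{adefn}, obtained by the standard recipe: write a Dirichlet-series/approximate-functional-equation heuristic for $|\zeta'|^{2k}$ near a zero, extract the diagonal contribution, and factorise over primes, in exact analogy with the Keating--Snaith conjecture for $\int_0^T|\zeta(1/2+it)|^{2k}\,dt$. Substituting $N=\log(T/2\pi)$, using $N(T)\sim \frac{T}{2\pi}\log\frac{T}{2\pi}=\frac{T}{2\pi}N$, and collecting powers of the logarithm ($N\cdot N^{(k+1)^2-1}=N^{(k+1)^2}$) produces precisely \eqref{HKOconj}. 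Consistency checks at $k=1$ (Ng's evaluation of $\sum|\zeta'(\rho_n)|^2$, refining Gonek) and the conjectural $k=-1$ case (Gonek's conjecture for $\sum|\zeta'(\rho_n)|^{-2}$) fix the normalisations.

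\textbf{Main obstacle.} The derivation is irreducibly heuristic: Steps~1 and~3 assume, with no available proof, that the moments of $\zeta'$ at the zeros follow the CUE prediction corrected by the conjectural arithmetic factor, and the precise form of $a(k)$ for complex $k$ can be pinned down only at special points. Even on the random matrix side, the passage to negative $k$ in Step~2 requires a delicate regularisation at the mean-spacing scale, which explains the threshold $\Re(k)>-\tfrac32$. Thus \eqref{HKOconj} must remain a conjecture; what can honestly be proved is the random matrix statement of Step~2 together with a handful of low-order instances, $k\in\{-1,0,1,2\}$, of the number-theoretic assertion.
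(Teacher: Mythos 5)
The statement you were asked about is a \emph{conjecture}, not a theorem, and the paper supplies no proof of it: it simply states the Hughes--Keating--O'Connell conjecture, attributes it, and records the cases in which it is known ($k=0$ by the Riemann--von Mangoldt formula, $k=1$ by Gonek \cite{G1} assuming RH), together with partial results on the weaker Gonek--Hejhal form. You correctly recognized this, and your sketch of the random-matrix heuristic is the right route and matches the origin of \eqref{HKOconj}: the CUE moment
\[
  \mathbb{E}_{U(N)}\Big[\sum_{n}|\Lambda_A'(e^{i\theta_n})|^{2k}\Big]\sim \frac{G^2(k+2)}{G(2k+3)}\,N^{(k+1)^2+1}
\]
for $\Re(k)>-\tfrac{3}{2}$ (the threshold coming from the pole of $1/G(2k+3)$), the conjectural arithmetic Euler factor $a(k)$, and the substitution $N\approx\log(T/2\pi)$ with $N(T)\sim\frac{T}{2\pi}\log\frac{T}{2\pi}$, assemble exactly as you describe. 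Your clear separation of what is rigorous (the random-matrix computation) from what is heuristic (the transfer to $\zeta$ and the arithmetic factor) is also the honest framing.

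One small attribution slip: you credit the $k=1$ case to ``Ng's evaluation of $\sum|\zeta'(\rho_n)|^2$, refining Gonek.'' The paper (correctly) credits Gonek \cite{G1} for $k=1$ under RH; Ng's cited work \cite{Ng} concerns $M(x)$ and the conjectural value of $\sum|\zeta(2\rho_n)/\zeta'(\rho_n)|^2$, not $J_1(T)$. Also note that the paper's display for $a(z)$ in \eqref{adefn} contains a typo ($x^2$ should read $z^2$), which you silently corrected in spirit; you need not reproduce the typo, but it is worth flagging if you quote the formula.
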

Note that the conjecture in the $k=1$ case is 
\begin{equation}
  \label{Jm1T}
    \sum_{0 < \gamma_n < T} \frac{1}{|\zeta'(\rho_n)|} 
  \sim a T (\log T)^{\frac{1}{4}}
\end{equation}
where $a$ is the constant given in \eqref{a}. 
Conjecture \ref{HKOconj} is a refinement of an earlier 1989 conjecture of Gonek \cite{Go} and Hejhal \cite{He}
which asserts that 
\begin{equation}
 \label{GHconjecture} 
 \text{for } k > - \frac{3}{2}, \, \, 
   \sum_{0 < \gamma_n < T} |\zeta'(\rho_n)|^{2k}  \asymp T (\log T)^{(k+1 )^2}.
  \, \, 
   \footnote{The original conjecture of Gonek and Hejhal was for all real $k$, but is now believed be false for $k \le -\frac{3}{2}$ (see \cite[pp.2616-2617]{HKO}). }
\end{equation} 
Conjecture \ref{HKO} is only known to be true for $k=0$ (Riemann-von-Mangoldt) and $k=1$  (Gonek \cite{G1}, assuming the Riemann Hypothesis). 
There is more evidence supporting the truth of the weaker  Gonek-Hejhal conjecture. 
In particular, assuming RH, combining the work of Kirila \cite{Ki}  and Benli-Elma-Ng \cite{BEN} establishes \eqref{GHconjecture} for integers $k \ge 0$. Kirila's work also establishes upper bounds of the correct order of magnitude for real values of $k \ge 0$. 
For negative moments, Gonek \cite{Go} gave a lower bound in the case $k=-1$ and Milinovich and Ng made this explicit \cite{MN}. 
For fractional $k<0$,  Heap-Li-Zhao \cite{HLZ}, have established the lower bound in \eqref{GHconjecture} assuming RH and the simplicity of zeta zeros. Gao-Zhao \cite{GZ} have
extended this to all real $k <0$.  Currently no one has proven upper bounds for $J_k(T)$, in the case $k <0$. 

 From Theorem \ref{generalsumthm} we have the following Corollary on $M(x)$. 
 \begin{cor}\label{Mofxcorr}
Assume the Effective LI conjecture (Conjecture \ref{ELI}).  Assume there exist positive constants $a_{-}, a_{+}$ such that 
for $T$ sufficiently large
\begin{equation}
   \label{Jminus1over2T}
  a_{-}T (\log T)^{\frac{1}{4}} \le    \sum_{0 < \gamma_n < T}  \frac{1}{|\zeta'(\rho_n)|}   \le  a_{+} T (\log T)^{\frac{1}{4}}
\end{equation}
and assume that for $T$ sufficiently large
\begin{equation}
    \label{Jminus1T}
     \sum_{0 < \gamma < T}  \frac{1}{|\zeta'(\rho_n)|^2}  \ll T^{2-\e}
\end{equation}
where $\e$ is sufficiently small. 
 Then we have
\begin{equation}\label{LimsupMofx}
 \limsup_{x\to \infty} \frac{M(x)}{\sqrt{x}(\log\log\log x)^{\frac{5}{4}}}\geq  \frac{8a_{-}}{5}
 \end{equation}
 and 
 \begin{equation}\label{LiminfMofx}
 \liminf_{x\to \infty} \frac{M(x)}{\sqrt{x}(\log\log\log x)^{\frac{5}{4}}}\leq - \frac{8a_{-}}{5}.
 \end{equation}
\end{cor}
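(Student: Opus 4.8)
The plan is to deduce Corollary~\ref{Mofxcorr} from Theorem~\ref{generalsumthm} applied to the sequence $\bm{\lambda}=\bm{\gamma}=(\gamma_n)_{n\in\mathbb{N}}$ of positive ordinates of the zeros of $\zeta(s)$, together with the coefficient sequence $r_n=1/(\rho_n\zeta'(\rho_n))$. Three ingredients are needed: (i) a form of the explicit formula \eqref{Mxexplicit} with a \emph{fixed} truncation height, identifying $M(x)/\sqrt{x}$ with $2\,\Phi_{X^2,\bm{\gamma},\mathbf{r}}(x)$ up to an error that is uniformly small on $[2,X]$; (ii) a verification that $(\bm{\gamma},\mathbf{r})$ satisfies Assumptions~1--5; and (iii) the fact that the Effective LI conjecture (Conjecture~\ref{ELI}) is exactly the statement that GELI holds for $\bm{\gamma}$.

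For (i) I would invoke the truncated explicit formula for $M(x)$ due to Titchmarsh \cite[Theorem 14.27]{Ti}: assuming RH, for $x\ge 2$ and a truncation parameter $T$ one has $M(x)/\sqrt{x}=2\,\Re\sum_{0<\gamma_n\le T}x^{i\gamma_n}/(\rho_n\zeta'(\rho_n))+E(x,T)$, and taking $T=X^2$ makes $E(x,X^2)=o(1)$ uniformly for $x\in[2,X]$; here the hypothesis \eqref{Jminus1T} on $J_{-1}(T)$ is precisely what controls the contribution of the (conditionally bounded) small values of $|\zeta'(\rho_n)|$ near the truncation. Thus $M(x)/\sqrt{x}=2\,\Phi_{X^2,\bm{\gamma},\mathbf{r}}(x)+o(1)$ uniformly on $[2,X]$. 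I expect this uniform error bound to be the main obstacle, since the explicit formula for $M(x)$ is markedly less clean than that for $\psi(x)$ because of negative-moment issues for $\zeta'$ at the zeros; the substantive work, by contrast, is already contained in Theorem~\ref{generalsumthm}, which we may use as a black box.

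For (ii) I would check the assumptions one at a time. Assumption~1 (and Assumption~1B if the asymptotic \eqref{Jm1T} is assumed) follows from \eqref{Jminus1over2T} by partial summation: with $F(t)=\sum_{0<\gamma_n\le t}|\zeta'(\rho_n)|^{-1}$ and $|\rho_n|=\gamma_n(1+O(\gamma_n^{-2}))$,
\[
\sum_{0<\gamma_n\le T}|r_n|=\sum_{0<\gamma_n\le T}\frac{1}{|\rho_n\zeta'(\rho_n)|}=\int_{2}^{T}\frac{F(t)}{t^{2}}\,dt+O\bigl((\log T)^{1/4}\bigr),
\]
and since $\int_{2}^{T}(\log t)^{1/4}\,\frac{dt}{t}\sim\frac{4}{5}(\log T)^{5/4}$, the bounds $a_-t(\log t)^{1/4}\le F(t)\le a_+t(\log t)^{1/4}$ place $\sum_{0<\gamma_n\le T}|r_n|$ between $\frac{4}{5}a_-(\log T)^{5/4}(1+o(1))$ and $\frac{4}{5}a_+(\log T)^{5/4}(1+o(1))$; hence \eqref{ass1} holds with $A=\frac{5}{4}$ and any $\alpha_-<\frac{4}{5}a_-$, $\alpha_+>\frac{4}{5}a_+$. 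Assumption~2 is immediate from $\gamma_n|r_n|\le|\zeta'(\rho_n)|^{-1}$, giving $\sum_{0<\gamma_n\le T}\gamma_n|r_n|\le a_+T(\log T)^{1/4}=o(T(\log T)^{5/4})$. Assumption~3 is precisely where the weakened hypothesis is used: $\gamma_n^{2}|r_n|^{2}\le|\zeta'(\rho_n)|^{-2}$, so $\sum_{0<\gamma_n\le T}\gamma_n^{2}|r_n|^{2}\le J_{-1}(T)\ll T^{2-\e}$, i.e.\ \eqref{ass3} with $\theta=2-\e<2$. Assumptions~4 and~5 are the Riemann--von Mangoldt formula $N(T)=\frac{T}{2\pi}\log\frac{T}{2\pi}+O(\log T)$ and the standard bound $N(T+1)-N(T)\ll\log T$. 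For (iii), Conjecture~\ref{ELI} is literally \eqref{GELIlbd} for $\bm{\lambda}=\bm{\gamma}$, and it also forces $\bm{\gamma}$ to be $\mathbb{Q}$-linearly independent (a finite nontrivial relation would make the left side of \eqref{ELIsum} vanish, contradicting the bound for large $T$), so all hypotheses of Theorem~\ref{generalsumthm} are met.

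Finally, Theorem~\ref{generalsumthm} (with $A=\frac{5}{4}$) gives, for every $\e'>0$ and all large $X$,
\[
\max_{x\in[2,X]}\Phi_{X^2,\bm{\gamma},\mathbf{r}}(x)\ge(\alpha_--\e')(\log\log\log X)^{5/4},\qquad \min_{x\in[2,X]}\Phi_{X^2,\bm{\gamma},\mathbf{r}}(x)\le-(\alpha_--\e')(\log\log\log X)^{5/4}.
\]
Combining with (i), there is $x\in[2,X]$ with $M(x)/\sqrt{x}\ge 2(\alpha_--\e')(\log\log\log X)^{5/4}+o(1)$, and separately $x\in[2,X]$ with $M(x)/\sqrt{x}\le-2(\alpha_--\e')(\log\log\log X)^{5/4}+o(1)$. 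Taking $\alpha_-$ arbitrarily close to $\frac{4}{5}a_-$, then $\e'$ arbitrarily small, and letting $X\to\infty$, yields \eqref{LimsupMofx} and \eqref{LiminfMofx}, namely $\limsup_{x\to\infty}M(x)/(\sqrt{x}(\log\log\log x)^{5/4})\ge\frac{8a_-}{5}$ and $\liminf_{x\to\infty}M(x)/(\sqrt{x}(\log\log\log x)^{5/4})\le-\frac{8a_-}{5}$.
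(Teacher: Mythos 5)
Your approach mirrors the paper's almost exactly: both deduce the corollary from Theorem~\ref{generalsumthm} by taking $\lambda_n=\gamma_n$ and $r_n=1/(\rho_n\zeta'(\rho_n))$, both verify Assumptions~1--5 via partial summation from \eqref{Jminus1over2T} (for Assumptions~1, 2) and the bound \eqref{Jminus1T} (for Assumption~3), and both invoke a truncated explicit formula for $M(x)$ at height $X^2$. Your bookkeeping of the constants ($A=\tfrac54$, $\alpha_-$ arbitrarily close to $\tfrac45 a_-$, the factor $2$ from $2\Re$) and the final passage to the $\limsup$ and $\liminf$ are all correct.

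The one genuine gap is that you write the explicit formula ``assuming RH'' without justifying why RH may be assumed; the corollary as stated does not hypothesize RH. The paper disposes of this at the outset by observing that if RH fails then $M(x)=\Omega_\pm(x^\vartheta)$ for some $\vartheta>\tfrac12$, and if $\zeta$ has a multiple zero then $M(x)=\Omega_\pm(x^{1/2}\log x)$; in either case \eqref{LimsupMofx} and \eqref{LiminfMofx} hold trivially and with room to spare, so one may henceforth assume RH and simplicity of zeros and then apply Titchmarsh's formula. You could instead argue that ELI forces RH and simplicity (a zero off the critical line, or a multiple zero, produces a repeated ordinate $\gamma$, giving the trivial relation $\gamma-\gamma=0$ and contradicting the $\mathbb{Q}$-linear independence implicit in \eqref{ELIsum}), but that reasoning is not in your write-up and is subtler than the paper's short-circuit. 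A second, cosmetic point: the paper's version of the explicit formula carries an $O(1)$ error rather than the $o(1)$ you assert; the $O(1)$ is what \eqref{Jminus1T} actually delivers via \cite[eq.~(4.18)]{ANS}, and it suffices since it is swallowed by the growing factor $(\log\log\log X)^{5/4}$.
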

\begin{rem}
Assuming RH and the simplicity of zeros of $\zeta(s)$, 
the lower bound for $J_{-1/2}(T)$ in \eqref{Jminus1over2T} is known to be true  for some $a_{-}$ due to the work of Heap, Li, and Zhao \cite{HLZ}.
However, an explicit value of $a_{-}$ has not been computed.   There are no known upper
bounds proven for $J_{-1}(T)$.  In fact, it is known that  $J_{-1}(T)=o(T^2)$ assuming the weak Mertens conjecture:
\begin{equation}
  \int_{1}^{X} \Big( \frac{M(x)}{x} \Big)^2 \, dx \ll \log X. 
\end{equation}
Recently, Bui, Florea, and Milinovich \cite{BFM} have shown that on RH a variant  of the sum $J_{-1}(T)$  is $O(T^{\frac{3}{2}})$ where zeros such that $|\zeta'(\rho_n)|$ is extremely small are excluded from the sum. 
\end{rem}

We have an analogous corollary for $L(x)$.  Note that $L(x)$ has an explicit formula 
(see \cite{F} and \cite[p. 773, eq. (4.21)]{ANS})
of the shape
 \begin{equation}
  \label{Lxexplicit}
   \frac{L(e^y)}{\sqrt{e^y}} = 2  \sum_{\gamma_n \le e^{y}} 
   \Big| \frac{\zeta(2\rho_n)}{\rho_n \zeta'(\rho_n)} \Big|
   \cos \Big(2 \pi i \Big( y \frac{\gamma_{n}}{2 \pi}+\beta_{n}' \Big) \Big)
  +o(1)
 \end{equation}
 where $(\gamma_n)_{n \in \mathbb{N}}$ denote the positive ordinates of zeta, 
  $(\beta_n')_{n \in \mathbb{N}}$ is a certain real sequence, and $o(1)$ approaches $0$ as $y \to \infty$. 
In this case the numbers $\frac{\zeta(2\rho_n)}{\rho_n \zeta'(\rho_n)}$ shall determine the size of $L(x)$. 
\begin{cor}\label{Lofxcorr}
Assume the Effective LI conjecture (Conjecture \ref{ELI}).  Assume there exist positive constants $b_{-}, b_{+}$ such that 
for $T$ sufficiently large

\begin{equation}
    \label{Kminus1over2T}
   b_{-}T (\log T)^{\frac{1}{4}} \le   
  \sum_{0 < \gamma < T}  \Big| \frac{\zeta(2 \rho_n)}{\zeta'(\rho_n)} \Big| 
  \le  b_{+}T (\log T)^{\frac{1}{4}} 
\end{equation}
and assume that for $T$ sufficiently large
\begin{equation}
    \label{Kminus1T}
     \sum_{0 < \gamma < T}  \frac{1}{|\zeta'(\rho_n)|^2}  \ll T^{2-\e}
\end{equation}
where $\e$ is sufficiently small.  Then we have
\begin{equation}\label{LimsupLofx}
 \limsup_{x\to \infty} \frac{L(x)}{\sqrt{x}(\log\log\log x)^{\frac{5}{4}}}\geq  \frac{8b_{-}}{5}
 \end{equation}
 and 
 \begin{equation}\label{LiminfLofx}
 \liminf_{x\to \infty} \frac{L(x)}{\sqrt{x}(\log\log\log x)^{\frac{5}{4}}}\leq - \frac{8b_{-}}{5}.
 \end{equation}
\end{cor}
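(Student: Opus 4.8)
The plan is to deduce this from Theorem~\ref{generalsumthm} exactly as Corollary~\ref{Mofxcorr} is, with the coefficient $1/(\rho_n\zeta'(\rho_n))$ replaced throughout by $\zeta(2\rho_n)/(\rho_n\zeta'(\rho_n))$. I would apply the theorem with $\bm{\lambda}=\bm{\gamma}=(\gamma_n)_{n\in\mathbb{N}}$, the sequence of positive ordinates of $\zeta$, and with
\[
  \mathbf{r}=(r_n)_{n\in\mathbb{N}},\qquad r_n=\frac{2\,\zeta(2\rho_n)}{\rho_n\,\zeta'(\rho_n)},
\]
so that, by the explicit formula \eqref{Lxexplicit}, $\Phi_{e^{y},\bm{\gamma},\mathbf{r}}(e^{y})=L(e^{y})/\sqrt{e^{y}}+o(1)$. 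Since $\bm{\lambda}=\bm{\gamma}$, the Effective LI conjecture for the ordinates of $\zeta$ is literally the Generalized Effective LI conjecture for $\bm{\lambda}$, so the relevant hypothesis of Theorem~\ref{generalsumthm} holds.

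The first task is to verify Assumptions 1--5 for this choice; this is a routine partial summation once \eqref{Kminus1over2T} and \eqref{Kminus1T} are granted. Writing $F(T)=\sum_{0<\gamma_n\le T}\bigl|\zeta(2\rho_n)/\zeta'(\rho_n)\bigr|$, so that $b_{-}T(\log T)^{1/4}\le F(T)\le b_{+}T(\log T)^{1/4}$ by \eqref{Kminus1over2T}, and using $|\rho_n|^{-1}=\gamma_n^{-1}+O(\gamma_n^{-3})$ together with $\int^{T}(\log t)^{1/4}\,\frac{dt}{t}=\tfrac{4}{5}(\log T)^{5/4}+O(1)$, partial summation yields
\[
  \Bigl(\tfrac{8b_{-}}{5}+o(1)\Bigr)(\log T)^{5/4}\ \le\ \sum_{0<\gamma_n\le T}|r_n|\ \le\ \Bigl(\tfrac{8b_{+}}{5}+o(1)\Bigr)(\log T)^{5/4}.
\]
Thus Assumption 1 holds with $A=\tfrac54$ and with $\alpha_{-}$ arbitrarily close to $\tfrac{8b_{-}}{5}$; this is the source of both the exponent $\tfrac54$ and the constant $\tfrac{8b_{-}}{5}$ in \eqref{LimsupLofx}--\eqref{LiminfLofx}. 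Assumption 2 follows from $\sum_{0<\gamma_n\le T}\gamma_n|r_n|\asymp F(T)\asymp T(\log T)^{1/4}=o\bigl(T(\log T)^{5/4}\bigr)$. For Assumption 3 I would combine the classical bound $\zeta(1+2i\gamma_n)\ll\log\gamma_n$ with \eqref{Kminus1T}:
\[
  \sum_{0<\gamma_n\le T}\gamma_n^{2}|r_n|^{2}\ \ll\ \sum_{0<\gamma_n\le T}\frac{|\zeta(2\rho_n)|^{2}}{|\zeta'(\rho_n)|^{2}}\ \ll\ (\log T)^{2}\sum_{0<\gamma_n\le T}\frac{1}{|\zeta'(\rho_n)|^{2}}\ \ll\ (\log T)^{2}T^{2-\e}\ \ll\ T^{\theta},
\]
with $\theta=2-\tfrac{\e}{2}<2$; it is exactly here that the hypothesis \eqref{Kminus1T} is used. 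Assumptions 4 and 5 are the Riemann--von Mangoldt formula $N(T)\asymp T\log T$ and the standard bound $\sum_{T<\gamma_n\le T+1}1\ll\log(T+2)$ for ordinates in a unit interval.

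Granting the assumptions, Theorem~\ref{generalsumthm} supplies, for every large $X$ and every $\e\in(0,\tfrac{8b_{-}}{5})$, points $x_{1},x_{2}\in[2,X]$ with $\Phi_{X^{2},\bm{\gamma},\mathbf{r}}(x_{1})\ge\bigl(\tfrac{8b_{-}}{5}-\e\bigr)(\log\log\log X)^{5/4}$ and $\Phi_{X^{2},\bm{\gamma},\mathbf{r}}(x_{2})\le-\bigl(\tfrac{8b_{-}}{5}-\e\bigr)(\log\log\log X)^{5/4}$. The one remaining step, which I expect to be the only genuine obstacle, is the transfer back to $L$ itself: one must show that $\Phi_{X^{2},\bm{\gamma},\mathbf{r}}(x)$ and $L(x)/\sqrt{x}$ differ by $o\bigl((\log\log\log X)^{5/4}\bigr)$ on the relevant range, which amounts to a truncated form of \eqref{Lxexplicit} with the sum over zeros cut at height $X^{2}$ and an error term controlled uniformly for $x\in[2,X]$. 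This is where the hypotheses re-enter: the tail of the zero-sum and the genuine error in the explicit formula for $L(x)$ are estimated in mean square against $\sum_{0<\gamma_n\le X^{2}}\gamma_n^{2}|r_n|^{2}$, so that \eqref{Kminus1T} again does the work --- precisely the mechanism used for $M(x)$ in Corollary~\ref{Mofxcorr}. Since $x_{i}\le X$ forces $\log\log\log x_{i}\le\log\log\log X$ (and $x_{i}\to\infty$), the bound at $x_{1}$ then yields \eqref{LimsupLofx} and the bound at $x_{2}$ yields \eqref{LiminfLofx}; letting $\e\to0$ completes the argument.
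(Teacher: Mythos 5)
Your proposal is correct and follows essentially the same route as the paper: the paper's own proof of Corollary~\ref{Lofxcorr} is a one-paragraph reduction to Corollary~\ref{Mofxcorr} with $r_n = \zeta(2\rho_n)/(\rho_n\zeta'(\rho_n))$ and $a_\pm$ replaced by $b_\pm$, and your argument is that reduction carried out explicitly (the only cosmetic difference is that you absorb the factor of $2$ from the explicit formula into $r_n$ rather than keeping it outside, which does not change anything). One point worth noting is that you are slightly more careful than the paper's sketch: when verifying Assumption 3 you explicitly invoke $|\zeta(1+2i\gamma_n)|\ll\log\gamma_n$ to pass from $\sum|\zeta(2\rho_n)|^2|\zeta'(\rho_n)|^{-2}$ to $(\log T)^2\sum|\zeta'(\rho_n)|^{-2}\ll T^{2-\e/2}$, a step the paper's ``replace $a_\pm$ by $b_\pm$'' shorthand leaves implicit but which is genuinely needed since \eqref{Kminus1T} only controls $\sum|\zeta'(\rho_n)|^{-2}$.
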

We now address the possible values for the constants $a_{\pm}$ and $b_{\pm}$. 
In light of the conjectured asymptotic \eqref{Jm1T}, we expect that $a_{+} =a+\e$ and $a_{-}=a-\e$, where $\e$ can be taken to be any sufficiently small positive constant. 
Based on Conjecture \ref{HKO}  it seems natural to make the following conjecture.
\begin{conj} \label{ANY2}
  If $0 \le s < 3$, 
then 
\begin{equation}
  \label{asymp}
    \sum_{0 \le \gamma_n\leq T}
\Big|\dfrac{\zeta(2\rho_n)}{\zeta'(\rho_n)}\Big|^{s}    \sim  \frac{G^2(2-\tfrac{s}{s})}{G(3-s)} a(-\tfrac{s}{2}) \Big( \sum_{n=1}^{\infty} \frac{d_{s/2}(n)}{n^2}  \Big) \frac{1}{2 \pi}
   \int_{1}^{T}  \Big(\log \frac{t}{2 \pi} \Big)^{(s/2-1)^2} dt
\end{equation}
where $d_k(\cdot)$ denotes the $k$-th divisor function \footnote{ $d_k(n)$ is defined to be the $n$-th coefficient of the Dirichlet series of $\zeta(s)^k$: i.e. $\zeta(s)^k = \sum_{n=1}^{\infty} d_k(n) n^{-s}$.}. 
\end{conj}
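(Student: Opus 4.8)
\emph{A heuristic derivation.} As with Conjecture~\ref{HKO} itself, the statement of Conjecture~\ref{ANY2} lies beyond current technology, so what I would present is a heuristic derivation in the style of the recipe of Conrey, Farmer, Keating, Rubinstein and Snaith and of Hughes--Keating--O'Connell, showing that \eqref{asymp} is the natural consequence of Conjecture~\ref{HKO} together with a decorrelation heuristic for the weight $|\zeta(2\rho_n)|^{s}$. The starting observation is that, on RH, $2\rho_n=1+2i\gamma_n$, so that
\[
  \Big|\frac{\zeta(2\rho_n)}{\zeta'(\rho_n)}\Big|^{s}\;=\;|\zeta(1+2i\gamma_n)|^{s}\,|\zeta'(\rho_n)|^{-s},
\]
which is the negative discrete moment $J_{-s/2}(T)$ of \eqref{JkT} weighted by the value of $|\zeta|^{s}$ on the line $\Re(s)=1$, well to the right of the critical line. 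The plan is to argue that this weight decouples from the discrete moment and contributes only a multiplicative arithmetic constant, equal to its mean value along that line.

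First I would pass to even integers $s=2m$, recovering the full range $0\le s<3$ at the end by analytically continuing the predicted main term in $s$ (the range $s<3$ being dictated by the pole of $G(3-s)^{-1}$). For $s=2m$, using $\overline{\zeta(2\rho_n)}=\zeta(2\overline{\rho_n})$ and $\overline{\zeta'(\rho_n)}=\zeta'(\overline{\rho_n})$, one rewrites $|\zeta(2\rho_n)/\zeta'(\rho_n)|^{2m}$ as $\zeta(2\rho_n)^{m}\zeta(2\overline{\rho_n})^{m}/(\zeta'(\rho_n)^{m}\zeta'(\overline{\rho_n})^{m})$; I would then insert $m$ small complex shifts into each of these four families of factors, apply the recipe to the resulting shifted discrete moment, and let the shifts tend to $0$. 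The analytic part of this is exactly the computation behind Conjecture~\ref{HKO} in the case $k=-\tfrac{s}{2}$: modelling $\zeta'(\rho_n)$ by the derivative of the characteristic polynomial of a Haar-random unitary matrix at an eigenvalue yields the matrix factor $G^2(2-\tfrac{s}{2})/G(3-s)$, the exponent $(\tfrac{s}{2}-1)^2=(1-\tfrac{s}{2})^2$ on $\log\tfrac{t}{2\pi}$, and the zero density $\tfrac{1}{2\pi}$, so that the sum over $\gamma_n\le T$ becomes $\tfrac{1}{2\pi}\int_1^T(\log\tfrac{t}{2\pi})^{(s/2-1)^2}\,dt$ after partial summation; the $\zeta(2\rho_n)$-factors, sitting at $\Re=1$, contribute only lower-order functional-equation ("swap") terms and do not change the exponent of the logarithm.

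The new ingredient is the arithmetic (Euler-product) factor. Expanding each $\zeta$-factor into its Dirichlet series reduces the shifted moment to sums over zeros of the form $\sum_{0<\gamma_n\le T}x^{i\gamma_n}\,w(\rho_n)$, where $x$ runs over ratios of integers coming from the $\zeta(2\rho_n)$-expansions and $w(\rho_n)$ is built from the $\zeta'(\rho_n)$-factors; evaluating these by a Landau--Gonek-type formula — the same mechanism that turns the naive diagonal into the correct arithmetic constant and that produces the factor $a(k)$ of \eqref{adefn} — leaves a local factor at each prime $p$. Because $\zeta(2\rho_n)=\prod_p(1-p^{-2\rho_n})^{-1}$ is built from $p^{-2\rho_n}$ while $\zeta'(\rho_n)$ involves $p^{-\rho_n}$, the cross-correlations between the two families are of lower order, and the local factor factorizes: the $\zeta'(\rho_n)$-part reproduces the local factor of $a(-\tfrac{s}{2})$, while the $\zeta(2\rho_n)$-part contributes $\sum_{j\ge0}d_{s/2}(p^{j})p^{-2j}=(1-p^{-2})^{-s/2}$. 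Multiplying over $p$ gives the global constant $a(-\tfrac{s}{2})\,\zeta(2)^{s/2}=a(-\tfrac{s}{2})\sum_{n\ge1}d_{s/2}(n)n^{-2}$, which is precisely the constant in \eqref{asymp}. As sanity checks, $s=0$ returns the Riemann--von Mangoldt formula $N(T)\sim\frac{T}{2\pi}\log\frac{T}{2\pi}$ (here $a(0)=1$ and $\sum_n d_0(n)n^{-2}=1$) and $s=2$ gives $a(-1)\zeta(2)=1$, i.e.\ the clean prediction $\sum_{0<\gamma_n\le T}|\zeta(2\rho_n)/\zeta'(\rho_n)|^{2}\sim\frac{T}{2\pi}$; the case $s=1$ yields a constant times $T(\log T)^{1/4}$, consistent with \eqref{Jm1T} and \eqref{Kminus1over2T}.

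The hard part — setting aside that the whole argument is only heuristic — is this last step. Making the factorization of the local factors rigorous would require controlling the sums $\sum_{0<\gamma_n\le T}x^{i\gamma_n}|\zeta'(\rho_n)|^{-s}$ uniformly in $x$, which is well out of reach: it needs, at the very least, RH, a strong pair-correlation hypothesis, and effective control of the small values of $|\zeta'(\rho_n)|$ — the same obstruction that makes every negative discrete moment difficult, and the reason even the case $s=2$ is open. A secondary obstacle is that for non-integer $s$ the quantity $|\zeta(2\rho_n)/\zeta'(\rho_n)|^{s}$ is not a finite product of Dirichlet series, so the passage from even integers to all $s\in[0,3)$ rests only on analytically continuing the predicted main term.
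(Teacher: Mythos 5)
This is a conjecture, not a proved result. The paper offers no derivation at all: it only remarks that ``one can try to use Conjecture~\ref{HKO} and partial summation to prove the asymptotic in~\eqref{asymp}, but it seems tricky to bound the error term,'' and then records the numerical checks at $s=1,2$. There is therefore no ``paper proof'' to compare your argument against; you are supplying a heuristic where the paper has only a pointer.

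Your route is genuinely different from the one the paper gestures at. The paper's suggested path is to take Conjecture~\ref{HKO} for $J_{-s/2}(T)$ as given and then partial-sum against the weight $|\zeta(1+2i\gamma_n)|^{s}$; the obstruction it flags is that the error term in HKO is not under control, so the partial summation is uncontrolled. You instead run a CFKRS-style moments recipe from scratch: reduce to even integers $s=2m$, insert shifts, model $|\zeta'(\rho_n)|^{-s}$ by random-matrix data (giving $G^2(2-\tfrac{s}{2})/G(3-s)$, the exponent $(\tfrac{s}{2}-1)^2$, and the density $\tfrac{1}{2\pi}$), and then argue that the weight $|\zeta(2\rho_n)|^{s}=|\zeta(1+2i\gamma_n)|^{s}$ decorrelates from the $|\zeta'(\rho_n)|^{-s}$ data, so that its only contribution is a multiplicative Euler factor $\prod_p(1-p^{-2})^{-s/2}=\zeta(2)^{s/2}=\sum_n d_{s/2}(n)n^{-2}$. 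This is a more informative heuristic: it explains where the arithmetic factor $\sum_n d_{s/2}(n)/n^{2}$ comes from, which the partial-summation route does not make visible, and it identifies the real obstruction (controlling $\sum_{\gamma_n\le T}x^{i\gamma_n}|\zeta'(\rho_n)|^{-s}$ uniformly in $x$) rather than merely the error term in HKO. Your sanity checks are correct: at $s=0$, $G^2(2)/G(3)=1$, $a(0)=1$, $\sum_n d_0(n)n^{-2}=1$ and one recovers $N(T)\sim\tfrac{T}{2\pi}\log\tfrac{T}{2\pi}$; at $s=2$, $a(-1)\zeta(2)=1$ and one recovers the paper's $\sum_{0<\gamma_n\le T}|\zeta(2\rho_n)/\zeta'(\rho_n)|^{2}\sim T/(2\pi)$; at $s=1$ one recovers the form of~\eqref{bvalue}. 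Two small notational points: the paper's displayed formula contains the typo $G^2(2-\tfrac{s}{s})$, which should read $G^2(2-\tfrac{s}{2})$ (you silently used the corrected form), and~\eqref{adefn} has a typo $x^2$ for $z^2$ in the Euler product; your reading of $a(\cdot)$ is the intended one. In short, your proposal is a reasonable and self-consistent heuristic, stronger than the paper's own brief remark, but both are heuristics and neither constitutes a proof.
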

This conjecture arose from joint work with summer student Yang Li, Amir Akbary, and Majid Shahabi.   One can try to use Conjecture \ref{HKO} and partial summation to prove the asymptotic in \eqref{asymp}, but it seems tricky to bound the error term. 
Note that Conjecture \ref{ANY2} implies 
\begin{equation}
  \label{bvalue}
   \sum_{0 \le \gamma_n\leq T}
\Big|\dfrac{\zeta(2\rho_n)}{\zeta'(\rho_n)}\Big| \sim b T(\log T)^{\frac{1}{4}} 
\text{ where } b = a   \cdot  \Big( \sum_{n=1}^{\infty} \frac{d_{1/2}(n)}{n^2}  \Big). 
\end{equation} 
This leads to the following conjecture for the extremal values of $L(x)$. 
\begin{conj}[2012]  \label{Lofxconjecture}
\begin{equation}
   \label{limsupinfLofx}
   \limsup_{x \to \infty} \frac{L(x)}{\sqrt{x} (\log \log \log x)^{\frac{5}{4}}} = \frac{8b}{5} \text{ and }
     \liminf_{x \to \infty} \frac{L(x)}{\sqrt{x} (\log \log \log x)^{\frac{5}{4}}} = -\frac{8b}{5}
\end{equation}
where $b$ is given in \eqref{bvalue}.
 
\end{conj}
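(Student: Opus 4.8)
The strategy is to establish the two matching directions of Conjecture \ref{Lofxconjecture} separately. For the lower bounds $\limsup_{x\to\infty} L(x)/(\sqrt x(\log\log\log x)^{5/4}) \ge \tfrac{8b}{5}$ and $\liminf \le -\tfrac{8b}{5}$, I would invoke Corollary \ref{Lofxcorr} with $b_- = b-\e$ and $b_+ = b+\e$ for arbitrary $\e>0$. Its hypotheses are met as follows: the two-sided bound \eqref{Kminus1over2T} is exactly the asymptotic \eqref{bvalue} up to the admissible $\e$, which in turn is the $s=1$ case of Conjecture \ref{ANY2}; the bound \eqref{Kminus1T} on $J_{-1}(T)$ follows from the weak Mertens conjecture (or may be posited outright); and ELI is the remaining hypothesis. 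Corollary \ref{Lofxcorr} then yields $\limsup \ge \tfrac{8(b-\e)}{5}$ and $\liminf \le -\tfrac{8(b-\e)}{5}$, and letting $\e\to 0^+$ gives the lower-bound half of the conjecture.

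For the upper bounds, the plan is to pass through the limiting distribution. Using the explicit formula \eqref{Lxexplicit} together with LI (the non-effective form), one shows that $y\mapsto L(e^y)/e^{y/2}$ has a limiting distribution $\nu_L$ coinciding with the law of the random series ${\bf X}_L = 2\sum_{n}\bigl|\zeta(2\rho_n)/(\rho_n\zeta'(\rho_n))\bigr|\cos(2\pi\theta_n)$ with the $\theta_n$ I.I.D.\ uniform on $[0,1]$; the required almost-sure convergence follows from $\sum_n|\zeta(2\rho_n)/(\rho_n\zeta'(\rho_n))|^2<\infty$, a consequence of \eqref{Kminus1T} via partial summation. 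The crux is then a \emph{sharp} large-deviation estimate for $P({\bf X}_L > V)$ as $V\to\infty$. Here one would run the resonance/alignment argument of Montgomery and Lamzouri: partial summation against \eqref{bvalue} gives $\sum_{0<\gamma_n\le T}|\zeta(2\rho_n)/(\rho_n\zeta'(\rho_n))| \sim \tfrac{4b}{5}(\log T)^{5/4}$, so forcing the initial block of cosines to align near $1$ contributes a value $\sim\tfrac{8b}{5}(\log T)^{5/4}$, and optimizing the block length against the cost of the alignment should yield $\max_{x\in[2,X]}\Phi_{X^2,\bm{\gamma},\mathbf r}(x)\sim\tfrac{4b}{5}(\log\log\log X)^{5/4}$, i.e.\ $\limsup L(x)/(\sqrt x(\log\log\log x)^{5/4}) \le \tfrac{8b}{5}$, and symmetrically for the $\liminf$.

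The main obstacle is this upper-bound half, and it has two genuinely hard components. First, one needs a matching \emph{upper} bound for $P({\bf X}_L>V)$ with the precise constant, which amounts to controlling the Laplace transform / modified-Bessel asymptotics of ${\bf X}_L$ to leading order; this is the same difficulty that leaves even the upper bound in Montgomery's Conjecture \ref{Montgomeryconjecture} (and the $\le\tfrac{8a}{5}$ half of Conjecture \ref{Ngconjecture}) open. Second, even granting the sharp large-deviation rate, converting a tail bound for the \emph{distribution} $\nu_L$ into a \emph{pointwise} statement valid for all sufficiently large $y$ requires an effective, quantitative form of the equidistribution underlying LI together with uniform control of the error term in \eqref{Lxexplicit}; no such effective equidistribution input is presently available. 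I therefore expect that, modulo Conjecture \ref{ANY2}, ELI, and the weak Mertens conjecture, the lower-bound half of Conjecture \ref{Lofxconjecture} is within reach via Corollary \ref{Lofxcorr}, while the upper-bound half will remain the essential difficulty.
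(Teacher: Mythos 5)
Your proposal matches the paper's own justification of Conjecture \ref{Lofxconjecture}: the $\geq \tfrac{8b}{5}$ / $\leq -\tfrac{8b}{5}$ inequalities are derived from Corollary \ref{Lofxcorr} conditionally on ELI, Conjecture \ref{ANY2} (via \eqref{bvalue}), and a $J_{-1}$-type bound, while the matching $\leq \tfrac{8b}{5}$ / $\geq -\tfrac{8b}{5}$ inequalities remain heuristic, obtained in the paper by combining the Chernoff-type tail bound for $P({\bf X}_{\bf r} \geq V)$ in Theorem \ref{ANSunpub} with a Borel--Cantelli argument and an explicitly unproven uniformity assumption on the convergence to the limiting distribution. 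You correctly identify both ingredients and both obstructions (the sharp upper tail bound and the distribution-to-pointwise passage), and your closing assessment of where the essential difficulty lies agrees with the paper; the only slight imprecision is that your middle paragraph's alignment mechanism is the device behind the \emph{lower} bound on $\max_{x\in[2,X]}\Phi_{X^2,\bm{\gamma},\mathbf{r}}(x)$ (i.e.\ Theorem \ref{generalsumthm}), not the sought upper bound, but your final paragraph recovers the correct picture.
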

Conjectures \ref{ANY2} and \ref{Lofxconjecture} first appear in the article \cite{ANS2}.
Observe that the $s=2$ case of Conjecture \ref{ANY2}  reduces to
\begin{equation}
    \sum_{0 \le \gamma_n\leq T}
\Big|\dfrac{\zeta(2\rho_n)}{\zeta'(\rho_n)}\Big|^2 \sim \frac{T}{2 \pi}. 
\end{equation}
This was previously conjectured in the author's Ph.D. thesis \cite[p.147]{Ng} by using a different method.  The author derived the conjecture by following ideas from  an unpublished article of Gonek (see \cite{Go2}), which are closely related to Montgomery's work on the pair correlation conjecture \cite{Mo1}.

We now discuss prime number error terms associated to real valued functions $\varphi$
which possess an explicit formula. 
Let $\varphi: [0,\infty)\to\mathbb{R}$
and let $y_0$ be a non-negative constant such that $\varphi$ is square-integrable on $[0, y_0]$. We shall assume 
there exists  $c$ a real constant, sequences
 $\mathbf{r}=(r_n)_{n \in \mathbb{N}}$,  $ \bm{\lambda}=(\lambda_n)_{n \in \mathbb{N}}$  of the types, \eqref{rn}, \eqref{lambdan},  which satisfy 
 Assumption 4 \eqref{ass4}  such that for $y \ge y_0$
 \begin{equation} \label{phiexplicit}
\varphi(y) = c + 2\Re \Big( \sum_{\lambda_n \le X}r_ne^{i\lambda_ny} \Big) + \mathcal{E}(y,X) ,
\footnote{In \cite{ANS} the definition of $\varphi(y)$ does not have the factor of 2 in front of the sum.  This is included for
consistency with the rest of this article. }
\end{equation}
for any $X \ge X_0>0$
where $\mathcal{E}(y,X)$ satisfies 
\begin{equation} \label{EyX}
\lim_{Y\to\infty}\dfrac{1}{Y}\int_{y_0}^Y|\mathcal{E}(y,e^Y)|^2dy=0.
\end{equation}
The prime number error term associated to $\varphi$ is given 
by 
\begin{equation}
  \label{Evarphix}
 E_{\varphi}(x) = \varphi(\log x).
\end{equation}
With this change of variable, the explicit formula is 
\begin{equation}
  \label{Eexplicit}
  E_{\varphi}(x)= c + 2\Re \Big( \sum_{\lambda_n \le X}r_n x^{i\lambda_n} \Big) + 
  \mathcal{F}(x,X)
\end{equation}
where $ \mathcal{F}(x,X):=
  \mathcal{E}(\log x,X)$
and we assume it satisfies the bound
\begin{equation}
  \label{errorbd}
  \mathcal{F}(x,X^2)=  \mathcal{E}(\log x,X^2) \ll 1 \, \text{ for }  \, 2 \le x \le X. 
\end{equation}
We present a general conjecture on the prime number error terms $E_{\varphi}(x)$. 
A version of this first appears in the unpublished manuscript \cite{ANS2}. 
\begin{conj} \label{phiconj}
Let $\varphi$ be a function of the above type which satisfies \eqref{phiexplicit} and \eqref{EyX}
and $E_{\varphi}(x)$ is the associated error term as in \eqref{Evarphix} which satisfies
\eqref{errorbd}. 
Assume  that the sequences 
$\bm{\lambda}=(\lambda_n)_{n \in \mathbb{N}}$ and $\mathbf{r}=(r_{n})_{n \in \mathbb{N}}$
  satisfy Assumption 3 \eqref{ass3} for some $\theta$ with $0 < \theta <2 $ 
  and assume  $\bm{\lambda}=(\lambda_n)_{n \in \N}$ satisfies  Assumption 4 \eqref{ass4}.
  If  $\bm{\lambda}=(\lambda_n)_{n \in \mathbb{N}}$ and $\mathbf{r}=(r_{n})_{n \in \mathbb{N}}$
  satisfy Assumption 1B: 
  \begin{equation}
   \sum_{0<\lambda_n \leq T}  |r_{n}| \sim \alpha (\log T)^{A} , 
  \end{equation}
 then we have 
\begin{equation*}
     \limsup_{x \to \infty} \frac{E_{\varphi}(x)}{ (\log \log \log x)^{A}} =  2\alpha
     \text{ and }
       \liminf_{x \to \infty} \frac{E_{\varphi}(x)}{ (\log \log \log x)^{A}} = -2\alpha.
\end{equation*}
\end{conj}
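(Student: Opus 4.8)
The plan is to derive the one-sided bounds
\[
\limsup_{x\to\infty}\frac{E_\varphi(x)}{(\log\log\log x)^A}\ \ge\ 2\alpha,
\qquad
\liminf_{x\to\infty}\frac{E_\varphi(x)}{(\log\log\log x)^A}\ \le\ -2\alpha
\]
from Theorem \ref{generalsumthm}, by feeding the explicit formula \eqref{Eexplicit} for $E_\varphi$ into that theorem; this additionally requires the Generalized Effective LI conjecture (Conjecture \ref{GELI}) for $\bm\lambda$, and it is these two inequalities — not the asserted equalities — that the plan delivers. The matching upper bounds, discussed at the end, are out of reach.

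\emph{Step 1: verify the hypotheses of Theorem \ref{generalsumthm}.} The assumptions of Conjecture \ref{phiconj}, together with Conjecture \ref{GELI} for $\bm\lambda$ and Assumption 5 \eqref{ass5} (not listed in the conjecture, but valid in the cases of interest — e.g.\ for $\bm\gamma=(\gamma_n)_{n\in\mathbb N}$ — and which should be appended), supply everything Theorem \ref{generalsumthm} needs. Indeed, Assumption 1B \eqref{ass1b} yields Assumption 1 \eqref{ass1} with $\alpha_\pm=\alpha\pm\e_0$ for any fixed $\e_0>0$ and all large $T$; Assumptions 3 \eqref{ass3} and 4 \eqref{ass4} are hypothesised outright; and Assumption 2 \eqref{ass2} follows from Assumption 1B by partial summation: writing $R(t):=\sum_{0<\lambda_n\le t}|r_n|\sim\alpha(\log t)^A$,
\[
\sum_{0<\lambda_n\le T}\lambda_n|r_n|=TR(T)-\int_0^T R(t)\,dt=\int_0^T\bigl(R(T)-R(t)\bigr)\,dt=o\bigl(T(\log T)^A\bigr),
\]
the main contribution being $\asymp T(\log T)^{A-1}$.

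\emph{Step 2: the reduction.} Replacing $X$ by $X^2$ in \eqref{Eexplicit} and invoking \eqref{errorbd}, for $2\le x\le X$ we have
\[
E_\varphi(x)=c+2\,\Phi_{X^2,\bm\lambda,\mathbf r}(x)+\mathcal F(x,X^2),\qquad \mathcal F(x,X^2)\ll 1 .
\]
Fix $\e\in(0,\alpha/2)$ and take $\alpha_\pm=\alpha\pm\e$ in Assumption 1. Theorem \ref{generalsumthm}, via \eqref{MaxOmega}, furnishes for each large $X$ a point $x_0=x_0(X)\in[2,X]$ with $\Phi_{X^2,\bm\lambda,\mathbf r}(x_0)\ge(\alpha_--\e)(\log\log\log X)^A=(\alpha-2\e)(\log\log\log X)^A$, so that
\[
E_\varphi(x_0)\ \ge\ (2\alpha-4\e)(\log\log\log X)^A+c-O(1)\ \ge\ (2\alpha-5\e)(\log\log\log X)^A
\]
once $X$ is large. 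Since $E_\varphi$ is locally bounded (clear from \eqref{Eexplicit} and Assumption 1B, applying the explicit formula with parameter $X=x_1$ on a given compact set $[2,x_1]$), the divergence $E_\varphi(x_0(X))\to\infty$ forces $x_0(X)\to\infty$; and $x_0(X)\le X$ gives $\log\log\log x_0(X)\le\log\log\log X$, whence
\[
\frac{E_\varphi(x_0(X))}{(\log\log\log x_0(X))^A}\ \ge\ 2\alpha-5\e
\]
for all large $X$. Letting $X\to\infty$ and then $\e\to0$ gives $\limsup_{x\to\infty}E_\varphi(x)/(\log\log\log x)^A\ge2\alpha$; running the same argument with \eqref{MinOmega} in place of \eqref{MaxOmega} gives $\liminf_{x\to\infty}E_\varphi(x)/(\log\log\log x)^A\le-2\alpha$.

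\emph{The main obstacle.} Upgrading these two inequalities to the equalities of Conjecture \ref{phiconj} requires the matching upper bound $\limsup_{x\to\infty}E_\varphi(x)/(\log\log\log x)^A\le2\alpha$ (and symmetrically for the $\liminf$), i.e.\ a sharp large-deviation \emph{upper} bound for the almost periodic function $x\mapsto 2\Re\bigl(\sum_{\lambda_n}r_nx^{i\lambda_n}\bigr)$, showing that the resonance construction underlying Theorem \ref{generalsumthm} is essentially optimal on the range $x\le X$. This is the precise analogue of the still-unbeaten upper bound $O(\log^2x)$ in von Koch's theorem and in Montgomery's Conjecture \ref{Montgomeryconjecture}, and no known method — even granting RH together with strong linear-independence hypotheses — produces such a bound. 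The plan therefore establishes only the two one-sided inequalities; the equalities remain conjectural.
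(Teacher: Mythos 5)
Your proposal correctly recognises that the stated equalities cannot be \emph{proved}, and the rigorous content you extract --- the one-sided inequalities
\[
\limsup_{x\to\infty}\frac{E_\varphi(x)}{(\log\log\log x)^A}\ \ge\ 2\alpha,
\qquad
\liminf_{x\to\infty}\frac{E_\varphi(x)}{(\log\log\log x)^A}\ \le\ -2\alpha
\]
under GELI --- is precisely the paper's Theorem~\ref{conjthm}, and your Steps~1 and~2 reconstruct its proof faithfully. Indeed you improve on the bookkeeping in two small ways: you show Assumption~2~\eqref{ass2} is a consequence of Assumption~1B~\eqref{ass1b} by partial summation, so it need not be hypothesised separately; and you make explicit the passage from $\max_{x\in[2,X]}E_\varphi(x)\gg(\log\log\log X)^A$ to the $\limsup$ statement via the observation that the extremising point $x_0(X)$ must tend to infinity (the paper leaves this implicit). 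Your remark that Assumption~5~\eqref{ass5} is needed for Theorem~\ref{generalsumthm} but is omitted from the hypotheses of Conjecture~\ref{phiconj} is a fair editorial point --- it does appear in Theorem~\ref{conjthm}.

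Where your account and the paper's diverge is in what supports the matching upper bound. You stop at noting it is ``out of reach,'' which is true as a theorem, but the paper offers a concrete probabilistic heuristic that is the real justification for stating the result as an \emph{equality} rather than merely an inequality: one models $\varphi$ by the random variable ${\bf X}_{\bf r}=2\sum_n r_n\cos(2\pi\theta_n)$ of~\eqref{Xr}, invokes the two-sided large-deviation estimate of Theorem~\ref{ANSunpub} showing $P({\bf X}_{\bf r}\ge V)\le\exp\bigl(-\exp\bigl((\alpha^{1/A}-\varepsilon)V^{1/A}\bigr)\bigr)$ with a matching lower bound, and then applies Borel--Cantelli to the events $E_n$ built from this tail bound. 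If the convergence of the empirical distribution of $\varphi$ to $\mu=\mathrm{law}({\bf X}_{\bf r})$ in \eqref{limdisac} were ``sufficiently uniform,'' this would give $\limsup_y \varphi(y)/(\log\log y)^A\le\alpha+o(1)$, i.e.\ $\limsup_x E_\varphi(x)/(\log\log\log x)^A\le 2\alpha$ after rescaling. The sharpness of the constant $2\alpha$ comes precisely from the fact that the exponent in Theorem~\ref{ANSunpub} is pinned down to $\alpha^{1/A}\pm\varepsilon$ on both sides. So your proposal captures the rigorous half of what the paper says in support of Conjecture~\ref{phiconj}, but a complete account should also record this Borel--Cantelli heuristic, since it is what turns the one-sided Theorem~\ref{conjthm} into a credible two-sided conjecture.
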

Note that this general conjecture contains Conjectures \ref{Montgomeryconjecture}, \ref{Ngconjecture}, and \ref{Lofxconjecture} as
 special cases.  The following table summarizes the values of $r_n$, $\lambda_n$, and other related quantities. 
\begin{center}
\begin{table}[h!]
\caption{Special cases of Conjecture \ref{phiconj}}
\begin{tabular}{|c|c|c|c|c|c|c|}
 \hline
   $\varphi(y)$ & $E_{\varphi}(x)$ & $r_{n}$ &  $\lambda_n$ & $\sum_{0 < \lambda_n < T}  |r_{n}|$  & $2\alpha$ & $A$ \\
 \hline
 \hline
   $\tfrac{\psi(e^y)-e^y}{e^{y/2}}$ &  $\tfrac{\psi(x)}{\sqrt{x}}$ & $\tfrac{1}{\rho_n}$ &  $\gamma_n$ & $\frac{1}{4 \pi} (\log T)^2$ & $\frac{1}{2 \pi}$ & 2 \\
 \hline
  $\tfrac{M(e^y)}{e^{y/2}}$ &  $\tfrac{M(x)}{\sqrt{x}}$ & $\tfrac{1}{\rho_n \zeta'(\rho_n)}$ & $\gamma_n$ &$\frac{4a}{5} (\log T)^{\frac{5}{4}}$ & $\frac{8a}{5}$ & $\frac{5}{4}$ \\
  \hline
  $\tfrac{L(e^y)}{e^{y/2}}$ & $\tfrac{L(x)}{\sqrt{x}}$ &  $\tfrac{\zeta(2 \rho_n)}{\rho_n \zeta'(\rho_n)}$ & $\gamma_n$ & $\frac{4b}{5} (\log T)^{\frac{5}{4}}$ & $\frac{8b}{5}$ & $\frac{5}{4}$  \\
  \hline
\end{tabular}
\end{table}
\end{center}

We are able to prove that GELI implies one side of the inequalities in the above conjecture. 
\begin{thm}
\label{conjthm}
Assume the Generalized Effective LI conjecture (Conjecture \eqref{GELI}).
Let $E_{\varphi}(x)$ be a function which satisfies an explicit formula of the type \eqref{Evarphix}  and assume that 
we have the estimate \eqref{errorbd}.
Also assume that the sequences 
$\bm{\lambda}=(\lambda_n)_{n \in \mathbb{N}}$ and $\mathbf{r}=(r_{n})_{n \in \mathbb{N}}$ satisfy Assumptions 1B, 2-5
\eqref{ass1b}, \eqref{ass2}, \eqref{ass3}, \eqref{ass4}, \eqref{ass5}.   Then we have 
\begin{equation}
  \label{lblimsup}
      \limsup_{x \to \infty} \frac{E_{\varphi}(x)}{ (\log \log \log x)^{A}} \ge   2 \alpha
      \text{ and }
        \liminf_{x \to \infty} \frac{E_{\varphi}(x)}{ (\log \log \log x)^{A}} \le   -2 \alpha.
\end{equation}
\end{thm}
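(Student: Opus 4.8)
The plan is to read the extreme values of $E_{\varphi}$ directly off Theorem~\ref{generalsumthm} by inserting the explicit formula. Applying \eqref{Eexplicit} with the parameter $X$ replaced by $X^{2}$, one has, for $2\le x\le X$,
\[
 E_{\varphi}(x)=c+2\,\Phi_{X^{2},\bm{\lambda},\mathbf{r}}(x)+\mathcal{F}(x,X^{2}),
\]
and $\mathcal{F}(x,X^{2})\ll 1$ on this range by \eqref{errorbd}. First I would observe that Assumption~1B is stronger than Assumption~1: given $\delta>0$, \eqref{ass1b} yields $(\alpha-\delta)(\log T)^{A}\le\sum_{0<\lambda_{n}\le T}|r_{n}|\le(\alpha+\delta)(\log T)^{A}$ for all large $T$, i.e.\ \eqref{ass1} holds with $\alpha_{-}=\alpha-\delta$. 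Hence all hypotheses of Theorem~\ref{generalsumthm} are in force, since Assumptions~2--5 are assumed outright. Absorbing $\delta$ and the $\e$ of that theorem into a single parameter, I then get: for every $\e\in(0,\alpha)$ and every sufficiently large $X$ there exist $x_{0}=x_{0}(X)$ and $x_{1}=x_{1}(X)$ in $[2,X]$ with $\Phi_{X^{2},\bm{\lambda},\mathbf{r}}(x_{0})\ge(\alpha-\e)(\log\log\log X)^{A}$ and $\Phi_{X^{2},\bm{\lambda},\mathbf{r}}(x_{1})\le-(\alpha-\e)(\log\log\log X)^{A}$.

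Feeding these into the explicit formula above, and using $x_{0},x_{1}\in[2,X]$ so that the error term is $O(1)$, I obtain
\[
 E_{\varphi}(x_{0})\ge 2(\alpha-\e)(\log\log\log X)^{A}+O(1),\qquad E_{\varphi}(x_{1})\le -2(\alpha-\e)(\log\log\log X)^{A}+O(1).
\]
Before concluding I would check that $x_{0}(X)\to\infty$ and $x_{1}(X)\to\infty$ as $X\to\infty$: taking $X=x$ in \eqref{Eexplicit} and combining the upper bound in Assumption~1 with \eqref{errorbd} gives the crude estimate $|E_{\varphi}(x)|\ll(\log x)^{A}$ for $x\ge 2$, which is incompatible with $|E_{\varphi}(x_{0}(X))|,|E_{\varphi}(x_{1}(X))|\to\infty$ unless the abscissae themselves tend to infinity.

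Finally, because $x_{0}\le X$ one has $0<(\log\log\log x_{0})^{A}\le(\log\log\log X)^{A}$ for $X$ large, and $E_{\varphi}(x_{0})>0$ for $X$ large; dividing the lower bound for $E_{\varphi}(x_{0})$ by the smaller of the two normalisations therefore only increases the quotient, giving
\[
 \frac{E_{\varphi}(x_{0})}{(\log\log\log x_{0})^{A}}\ge\frac{E_{\varphi}(x_{0})}{(\log\log\log X)^{A}}\ge 2(\alpha-\e)+o(1)\qquad(X\to\infty).
\]
Since $x_{0}(X)\to\infty$, this forces $\limsup_{x\to\infty}E_{\varphi}(x)/(\log\log\log x)^{A}\ge 2(\alpha-\e)$, and letting $\e\to 0^{+}$ gives the $\limsup$ half of \eqref{lblimsup}. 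The $\liminf$ half follows identically from $x_{1}$, the only change being that $E_{\varphi}(x_{1})<0$, so dividing by the smaller normalisation makes the quotient more negative and yields $E_{\varphi}(x_{1})/(\log\log\log x_{1})^{A}\le-2(\alpha-\e)+o(1)$.

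All of the genuine content sits inside Theorem~\ref{generalsumthm} (and behind it Lamzouri's method together with the $L^{2}$ bound for almost periodic functions); the present argument is a deduction. Accordingly, I expect the only delicate points to be the three bookkeeping steps used above: passing from Assumption~1B to Assumption~1 so that the limiting constant is exactly $\alpha$ rather than some $\alpha_{-}<\alpha$; confirming that the extremal abscissae escape to infinity, so that a pointwise bound at $x_{0}(X)$ legitimately feeds the limit superior over $x\to\infty$; and keeping the inequality pointing the right way when the normalisation $(\log\log\log X)^{A}$ is traded for $(\log\log\log x_{0})^{A}$, which goes in our favour precisely because $x_{0}\le X$.
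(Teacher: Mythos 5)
Your argument is correct and follows exactly the route the paper takes: apply \eqref{Eexplicit}/\eqref{errorbd} to write $E_{\varphi}(x)=2\Phi_{X^{2},\bm{\lambda},\mathbf{r}}(x)+O(1)$ for $2\le x\le X$, observe that Assumption~1B supplies Assumption~1 with $\alpha_{\pm}=\alpha\pm\e$, and invoke Theorem~\ref{generalsumthm}. You have in fact been more careful than the paper's own two-line deduction, which passes directly from $\max_{x\in[2,X]}E_{\varphi}(x)\ge(2\alpha-\e)(\log\log\log X)^{A}$ to the $\limsup$ without explicitly noting that the extremal abscissae escape to infinity or that replacing $(\log\log\log X)^{A}$ by $(\log\log\log x_{0})^{A}$ goes in the right direction; your bookkeeping at those two points is sound.
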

This result provides strong evidence toward Conjecture \ref{phiconj}.  
We now provide  some heuristic reasoning behind Conjecture \ref{phiconj} and why we believe
the above inequalities should be sharp.   
Note that the function $\varphi(y)$ can be modelled by the random sum
\begin{equation}
  \label{Xr}
  {\bf X}_{\bf r} = 2 \sum_{n=1}^{\infty} r_{n} \cos(2 \pi \theta_n) 
\end{equation}
where ${\bf r}= (r_{n})_{n \in \mathbb{N}}$ and
$\theta_n \in [0,1]$ are I.I.D. random variables.   
 The following large deviation result was established in \cite{ANS2}.
\begin{thm}[Akbary, Ng, Shahabi, 2012, unpublished] \label{ANSunpub}
Let $\e >0$.   Let  ${\bf X}_{\bf r}$ be the random variable \label{Xr} where  the sequences 
$ \mathbf{r}=(r_n)_{n \in \mathbb{N}}$
 and $\bm{\lambda}= (\lambda_n)_{n \in \mathbb{N}} $
 satisfy Assumption 1B \eqref{ass1b}, Assumption 5 \eqref{ass4}, and the following three 
 assumptions:
 \begin{enumerate}
 \item[(i)] There exists a positive $c$ such that  $\sum_{\lambda_n > T} |r_n|^2 \ll   \frac{(\log T)^{c}}{T}$.
 \item[(ii)] For every $0< \varepsilon < 1$, 
 $  \sum_{n=1}^\infty |r_n|^{1+\varepsilon}  \ll_{\e} 1$. 
 \item[(iii)]   For every $\varepsilon >0$, 
 there exists $C:=C(\e)>0$ such that 
$|r_n| \ge  \frac{C}{\lambda_n^{1+\varepsilon}}$, for all $n \in \mathbb{N}$. 
   \end{enumerate}
Then for $V \ge V_{\e}$, we have
\begin{equation}
 \label{largedevXphi}
\exp\bigg(-\exp\Big(\big(\alpha^{\frac{1}{A}}+\varepsilon\big)V^{\frac{1}{A}}\Big)\bigg)
\leq P\big( {\bf X}_{\bf r}  \geq V\big) \le 
\exp\bigg(-\exp\Big(\big(\alpha^{\frac{1}{A}}-\varepsilon\big)V^{\frac{1}{A}}\Big)\bigg).
\end{equation}
\end{thm}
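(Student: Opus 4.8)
The plan is to establish the two inequalities in \eqref{largedevXphi} by different means: the upper bound for the tail probability by a Chernoff (exponential--moment) argument, and the lower bound by a direct ``resonance'' construction. Throughout I replace each $r_n$ by $|r_n|$, which leaves the law of ${\bf X}_{\bf r}$ in \eqref{Xr} unchanged since the arguments of the $r_n$ may be absorbed into the (uniform) $\theta_n$.

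\emph{Upper bound.} As the $\theta_n$ are I.I.D.\ uniform on $[0,1]$ and $\mathbb{E}\big(e^{t\cos(2\pi\theta_1)}\big)=I_0(t)$, the exponential moment factorises: for every $s>0$,
\[
 \mathbb{E}\big(e^{s{\bf X}_{\bf r}}\big)=\prod_{n\ge1}I_0\big(2s|r_n|\big)=e^{\Psi(s)},\qquad \Psi(s):=\sum_{n\ge1}\log I_0\big(2s|r_n|\big),
\]
the product being finite since $\log I_0(t)\ll t^2$ and $\sum_n|r_n|^2<\infty$ by hypothesis (i). Markov's inequality gives $P\big({\bf X}_{\bf r}\ge V\big)\le\inf_{s>0}e^{-sV+\Psi(s)}$, so everything reduces to a sharp estimate of $\Psi(s)$ as $s\to\infty$. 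For this I would use the two elementary bounds $\log I_0(t)\le t$ (from $I_0(t)=\tfrac1\pi\int_0^\pi e^{t\cos\phi}\,d\phi\le e^{t}$) and $\log I_0(t)\le t^2/4$ (from the power series), so that \emph{for every} cutoff $U\ge1$,
\[
 \Psi(s)\ \le\ 2s\sum_{\lambda_n\le U}|r_n|\ +\ s^2\sum_{\lambda_n>U}|r_n|^2 .
\]
Applying Assumption 1B to the first sum and hypothesis (i) to the second, then choosing $U=U(s)$ with $\log U\sim\log s$ while $s^2(\log U)^{c}/U=o\big(s(\log s)^A\big)$ (e.g.\ $U=s(\log s)^{c-A+1}$), this yields $\Psi(s)\le(2\alpha+o(1))\,s(\log s)^A$. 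Substituting back and optimising in $s$ (the optimal $s$ has $\log s\asymp V^{1/A}$) gives the upper bound in \eqref{largedevXphi}.

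\emph{Lower bound.} The transform controls only upper tails, so instead I would exhibit an event of positive--but doubly exponentially small--probability on which ${\bf X}_{\bf r}\ge V$. Fix a small $\delta=\delta(V)$ and choose $U=U(V)$ so large that $2\cos(2\pi\delta)\sum_{\lambda_n\le U}|r_n|\ge V+1$; by Assumption 1B this forces $(\log U)^A\asymp V$, hence $\log U\asymp V^{1/A}$. Write ${\bf X}_{\bf r}=H+T$ with $H=2\sum_{\lambda_n\le U}|r_n|\cos(2\pi\theta_n)$ and $T$ the complementary tail. On the event $\mathcal{A}=\{|\theta_n|\le\delta\ \text{whenever}\ \lambda_n\le U\}$ one has $H\ge V+1$, while $T$ is independent of $\mathcal{A}$, has mean $0$ and variance $2\sum_{\lambda_n>U}|r_n|^2\ll(\log U)^{c}/U\to0$ by hypothesis (i), so $P(T<-1)=o(1)$ by Chebyshev; hence $P\big({\bf X}_{\bf r}\ge V\big)\ge\tfrac12 P(\mathcal{A})$. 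By independence $P(\mathcal{A})=(2\delta)^{N_{\bm{\lambda}}(U)}$, and $N_{\bm{\lambda}}(U)\ll U\log U$ by the counting hypothesis \eqref{ass4}; taking $\delta$ a fixed negative power of $U$ gives $\log P(\mathcal{A})\gg -U(\log U)^2$, which on inserting $\log U\asymp V^{1/A}$ (the exponentially large polynomial factor being absorbed) is the lower bound in \eqref{largedevXphi}. Hypotheses (ii), (iii) and Assumption 5 enter in the bookkeeping--e.g.\ in tuning $\delta$, in confirming that the lower-order terms in $\Psi$ are genuinely negligible, and in checking the resonance construction uses an admissible margin.

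The main obstacle is the sharp analysis of $\Psi(s)$ and of the dual quantity $\sum_{\lambda_n\le U}|r_n|$: it is these that pin down the constant in the double exponent. Because the $r_n$ need not be monotone one cannot simply read off where $2s|r_n|$ crosses $1$, so the split at an \emph{arbitrary} cutoff $U$ (legitimate because $\log I_0(t)\le\min\{t,t^2/4\}$ termwise) followed by optimisation over $U$ is essential, and one must verify that the quadratic contribution is of strictly lower order so that the leading constant is exactly the one supplied by Assumption 1B. Matching the upper and lower constants up to $\e$ requires, in addition, that the crude bounds $\log I_0(t)\le t$ and $\cos(2\pi\delta)\ge1-2\pi^2\delta^2$ are tight on the ranges that dominate; the remaining manipulations are routine partial summation.
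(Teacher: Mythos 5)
The paper does not contain a proof of Theorem~\ref{ANSunpub}: it is quoted from an unpublished manuscript, and the only methodological guidance given is the remark that the upper bound is a Chernoff-type argument in the style of Montgomery--Odlyzko, while the lower bound follows Montgomery's direct construction. Your proposal implements exactly that outline, with the correct moment-generating identity $\mathbb{E}(e^{s{\bf X}_{\bf r}})=\prod_n I_0(2s|r_n|)$, the legitimate termwise split $\log I_0(t)\le\min(t,t^2/4)$, the right use of hypothesis~(i) to make the quadratic tail of $\Psi(s)$ negligible after optimizing the cutoff $U$, and, for the lower bound, the resonance event on the head plus Chebyshev on the tail. The method is correct and matches the paper's stated approach.

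The genuine gap is that you never extract the constant in the inner exponent, asserting only that optimising in $s$ ``gives the upper bound in \eqref{largedevXphi}''; if you actually extract it, it does \emph{not} match what is printed. From $\Psi(s)\le(2\alpha+o(1))\,s(\log s)^A$, putting $\log s=\beta V^{1/A}$ gives $-sV+\Psi(s)\approx -sV\big(1-2\alpha\beta^A\big)$, which is useful only for $\beta<(2\alpha)^{-1/A}$; hence your Chernoff bound yields $P({\bf X}_{\bf r}\ge V)\le\exp\big(-\exp(((2\alpha)^{-1/A}-\e)V^{1/A})\big)$. Your lower-bound construction produces the \emph{same} constant: forcing $2\cos(2\pi\delta)\sum_{\lambda_n\le U}|r_n|\ge V+1$ with $\sum_{\lambda_n\le U}|r_n|\sim\alpha(\log U)^A$ requires $\log U\ge (V/(2\alpha))^{1/A}(1+o(1))$, and then $N_{\bm{\lambda}}(U)\asymp U\log U$ gives $\log P(\mathcal{A})\asymp-\exp\big((2\alpha)^{-1/A}(1+o(1))V^{1/A}\big)$. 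So your argument, correctly executed, proves \eqref{largedevXphi} with $(2\alpha)^{-1/A}$ in place of $\alpha^{1/A}$, and these are genuinely different (in Montgomery's case $\alpha=1/(4\pi)$, $A=2$, so $(2\alpha)^{-1/A}=\sqrt{2\pi}$ while $\alpha^{1/A}=1/(2\sqrt\pi)$). It is worth noting that $(2\alpha)^{-1/A}$ is precisely the constant that makes the Borel--Cantelli heuristic following the theorem give $\limsup E_\varphi(x)/(\log\log\log x)^A\le 2\alpha$ in accord with Conjecture~\ref{phiconj}; the paper's own intermediate step ``$(1/(\alpha^{1/A}-\e))^A=\alpha+o(1)$'' is also arithmetically off, since $(\alpha^{-1/A})^A=1/\alpha$. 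The printed $\alpha^{1/A}$ is therefore very likely a slip in the source, but a careful proof must notice and reconcile the discrepancy, and yours does not.
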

This result shows that the upper bound in \eqref{largedevXphi} is essentially sharp. The constants in the upper and lower bounds differ are differ by an $\e$. 
The upper bound in this result follows \cite[Theorem 2]{MO} which is  an application of Chernoff's inequality.
The lower bound can be established using ideas from \cite[Sec. 3, Theorem 1]{Mo3}. 
Note that there are sharper results in the literature.  The work of Granville and Lamzouri \cite{GL} derive more precise results for 
$P\big( {\bf X}_{\bf r}  \geq V\big)$  in the case that $ \mathbf{r}=(r_n)_{n \in \mathbb{N}}$ is decreasing. 
See also  Majid Shahabi's 2012 M.Sc. thesis \cite{Sh} where related results are proven. In Shahabi's work the coefficients 
$ \mathbf{r}=(r_n)_{n \in \mathbb{N}}$ are not assumed to be decreasing.  It is should be emphasized that in the cases of the distribution of  $M(x)$ and $L(x)$ the corresponding  coefficients $ \mathbf{r}=(r_n)_{n \in \mathbb{N}}$ do not decrease. 
In the special case of the random variable ${\bf X}_M$ \eqref{Mrandom} bounds of the above type were derived in \cite[Corollary 12, pp.384-387]{Ng}.
At the time, the author was unable to find the same constant multiplying the $V^{1/A}$ term in the exponent.   That is the constants $\tilde{c}_1, \tilde{c}_2$ in the cited article 
were off by a constant factor. 

Under suitable conditions the function $\varphi(y)$ (see Theorem \ref{ANScor} below) possesses a limiting distribution (probability measure) $\mu$. Namely, 
\[
  \lim_{Y \to \infty} \frac{1}{Y} \int_{1}^{Y} f(\varphi(y)) \, dy  = \int_{-\infty}^{\infty} f(t) d \mu(t)
\]
for all bounded, continuous functions $f(x)$ on $\mathbb{R}$. 
In the case that $\mu$ is absolutely continous and the  $\bm{\lambda}= (\lambda_n)_{n \in \mathbb{N}} $ 
are linearly independent over the rationals, we have 
\begin{equation}
  \label{limdisac}
  \lim_{Y \to \infty} \frac{1}{Y} 
  \text{meas} \Big( y \in [1,Y] \ | \ \varphi(y) \ge V \Big)
  =  \mu([V, \infty)) = P\big( {\bf X}_{\bf r}  \geq V\big). 
\end{equation}
Let us assume that $P\big( {\bf X}_{\bf r}  \geq V\big) \le  \exp(-\exp (CV^{\frac{1}{A}}))$ for the positive constant $C= (\alpha^{\frac{1}{A}}-\varepsilon)$. Now consider the event 
\[
  E_n = \Big\{ \underline{\theta} \in \mathbb{T}^{\infty} \ | \ 
      {\bf X}_{\bf r}(\underline{\theta}) \ge \left( \frac{1}{C}
             \log \log (n (\log n)^{\kappa}) \right)^{A} 
   \Big\} 
\]
where $\kappa > 1$. 
Therefore we have by the upper bound on $P\big( {\bf X}_{\bf r}  \geq V\big) $
\begin{equation}
   \sum_{n=n_{0}}^{\infty} P(E_{n}) 
   \ll  \sum_{n=n_{0}}^{\infty} \frac{1}{n(\log n)^{\kappa}}  
   \ll 1 
\end{equation}
for $n_{0}$ a sufficiently large integer. The  Borel-Cantelli lemma implies that 
\begin{equation}
  P(E_{n} \ \text{ infinitely often}) = 0. 
\end{equation}
Therefore if the convergence of \eqref{limdisac}  is
sufficiently uniform then we expect that
\begin{equation}
        \limsup_{y \to \infty} \frac{\varphi(y) }{ (\log \log
  y)^{A}}
  \le \left( \frac{1}{C} \right)^{A} = 
  \left( \frac{1}{\alpha^{\frac{1}{A}}-\varepsilon} \right)^{A} =\alpha + o(1).
\end{equation}
Since $E_{\varphi}(x) = \varphi(\log x)$ it follows that 
\begin{equation}
     \limsup_{x \to \infty} \frac{E_{\varphi}(x) }{ (\log \log
  \log x)^{A}}
  \le \alpha + o(1).
\end{equation}
Note that we do not expect to find a smaller bound since Theorem \ref{ANSunpub} 
shows that the upper bound for large deviations  is essentially sharp. 
This last inequality combined with the first inequality in \eqref{lblimsup} leads to Conjecture \ref{phiconj}.

To end this introduction, we explain how the condition $0 \le \theta < 2$ in Assumption 3 \ref{ass3} arises and 
we present an independent result. 
The following Proposition improves \cite[Theorem 1.2, pp.19-22]{ANS}
which in turn was a generalization of  \cite[Lemma 6, p. 372]{Ng} and an argument from \cite[pp.148-151]{Cr}. 
\begin{pro} \label{MengProp}
Assume  that the sequences 
$\bm{\lambda}=(\lambda_n)_{n \in \mathbb{N}}$ and $\mathbf{r}=(r_{n})_{n \in \mathbb{N}}$
  satisfy  Assumption 3 \eqref{ass3} for some $\theta$ with $0 < \theta <2 $
and assume  $\bm{\lambda}=(\lambda_n)_{n \in \N}$ satisfies  Assumption 5 \eqref{ass5}.
Let $V$ be a real number and $X > T \ge 1$. 
Then for $\epsilon>0$ sufficiently small
\begin{equation}
   \label{Vintbd}
    \int_{V}^{V+1}\Big|\sum_{T<\lambda_n\leq X}r_ne^{iy\lambda_n}\Big|^2dy 
   \ll 
  \frac{1}{T^{2-\theta-\epsilon}}
\end{equation}
and
\begin{equation}
 \label{rnrmbound}
\sum_{T<\lambda_n\leq X}\sum_{T<\lambda_m\leq X}|r_nr_m|\min\left(1,\dfrac{1}{|\lambda_n-\lambda_m|}\right)
\ll T^{-(2-\theta-\epsilon)}. 
\end{equation}
\end{pro}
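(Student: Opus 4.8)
The plan is to deduce the $L^2$-estimate \eqref{Vintbd} from the bilinear estimate \eqref{rnrmbound}, and then to prove \eqref{rnrmbound} by a dyadic decomposition of the range of the frequencies $\lambda_n$, handling near-diagonal and well-separated scales by different means. For the reduction I would expand the square,
\[
\int_{V}^{V+1}\Big|\sum_{T<\lambda_n\le X}r_ne^{iy\lambda_n}\Big|^2\,dy=\sum_{T<\lambda_n\le X}\sum_{T<\lambda_m\le X}r_n\overline{r_m}\int_{V}^{V+1}e^{iy(\lambda_n-\lambda_m)}\,dy ,
\]
and use the elementary bound $\big|\int_V^{V+1}e^{iyu}\,dy\big|\le\min\!\big(1,\tfrac{2}{|u|}\big)\ll\min\!\big(1,\tfrac1{|u|}\big)$, which shows the left side is $\ll\sum_n\sum_m|r_nr_m|\min(1,\tfrac1{|\lambda_n-\lambda_m|})$; hence \eqref{Vintbd} follows from \eqref{rnrmbound}. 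To prove \eqref{rnrmbound} I would fix an integer $j_0$ with $2^{j_0}\asymp T$, write $D_j=\{n:2^j<\lambda_n\le 2^{j+1}\}$ so that the sum is supported on $\bigcup_{j\ge j_0}D_j$, and record the two scale-by-scale inputs: Assumption 3 \eqref{ass3} gives $\sum_{n\in D_j}|r_n|^2\le 2^{-2j}\sum_{n\in D_j}\lambda_n^2|r_n|^2\ll 2^{-j(2-\theta)}$, while Assumption 5 \eqref{ass5} gives both $|D_j|\ll 2^j\log(2^j)\ll 2^j j$ and the unit-interval count $\#\{m:\lambda_m\in[t,t+1)\}\ll\log(t+2)$.

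The contribution of pairs lying in the same or an adjacent dyadic block ($|j-k|\le 1$, which also absorbs the diagonal $n=m$) I would estimate using $|r_nr_m|\le\tfrac12(|r_n|^2+|r_m|^2)$ and symmetry, reducing it to $\sum_{n\in D_j}|r_n|^2\sum_{m}\min(1,\frac1{|\lambda_n-\lambda_m|})$ with $m$ ranging over the three neighbouring blocks; grouping those $\lambda_m$ by the integer part of $|\lambda_n-\lambda_m|$ and invoking the unit-interval count bounds the inner sum by $\ll j\sum_{1\le l\le 2^{j+2}}l^{-1}\ll j^2$, whence this part is $\ll\sum_{j\ge j_0}j^2\,2^{-j(2-\theta)}\ll(\log T)^2\,T^{-(2-\theta)}$.

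For pairs in well-separated blocks ($k\ge j+2$) one has $|\lambda_n-\lambda_m|\ge 2^{k-1}$, so $\min(1,\frac1{|\lambda_n-\lambda_m|})\le 2^{1-k}$, and I would bound the linear sums by Cauchy--Schwarz, $\sum_{n\in D_j}|r_n|\le|D_j|^{1/2}\big(\sum_{n\in D_j}|r_n|^2\big)^{1/2}\ll j^{1/2}2^{j(\theta-1)/2}$; summing the resulting series first in $k$ (convergent since $\theta<3$) and then in $j$ (convergent since $\theta<2$) gives $\ll\sum_{j\ge j_0}j\,2^{j(\theta-2)}\ll(\log T)\,T^{-(2-\theta)}$. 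Adding the two pieces, the double sum in \eqref{rnrmbound} is $\ll(\log T)^2\,T^{-(2-\theta)}$; choosing $\epsilon\in(0,2-\theta)$ and absorbing $(\log T)^2\ll_\epsilon T^\epsilon$ yields \eqref{rnrmbound}, and then \eqref{Vintbd}.

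The main obstacle is the interaction between widely separated scales: a direct Cauchy--Schwarz on the full double sum only produces a bound of the shape $(\log X)^2\sum_{T<\lambda_n\le X}|r_n|^2$, and the factor $(\log X)^2$ is inadmissible because \eqref{Vintbd} is required uniformly for all $X>T$. One must therefore exploit the extra decay $\min(1,1/|\lambda_n-\lambda_m|)\le 2/\lambda_m$ that is available once $\lambda_m\gg\lambda_n$, and balance it against the second-moment bound from Assumption 3 and the counting bound $|D_j|\ll 2^j j$ from Assumption 5 used to control $\sum_{n\in D_j}|r_n|$. This is exactly the step that improves on \cite[Theorem 1.2]{ANS}: the cruder treatment there forces $\theta<3-\sqrt3$, whereas the scale-separated estimate — following Meng \cite{Me} — converges for the full range $0\le\theta<2$. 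A secondary technical point is confirming that all logarithmic losses collapse into a single $T^\epsilon$, which is automatic since $j_0\asymp\log_2 T$ and every series over scales is dominated by its term at $j=j_0$.
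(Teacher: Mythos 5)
Your proof is correct, and the overall skeleton (expand the square, reduce to the bilinear sum, then estimate by separating near and far pairs) matches the paper's. But the decomposition you use is genuinely different from the one in the paper. The paper follows Meng's scheme literally: for each fixed $\lambda_m$ it partitions the remaining $\lambda_n$ by their distance to $\lambda_m$ into $\sim 1/\epsilon$ shells of the form $\{\lambda_m^{a_\ell} \le |\lambda_m-\lambda_n| < \lambda_m^{a_{\ell-1}}\}$ with $a_\ell=1-\ell/N$ and $N\sim 1/\epsilon$, and estimates each of the $2N+1$ pieces separately, invoking an auxiliary partial-summation lemma (Lemma \ref{pslemma}) for the tail sums $\sum_{\lambda_n>T}|r_n|\lambda_n^{1-a}$ and $\sum_{\lambda_n>T}|r_n|^2\lambda_n^{2-b}$. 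You instead cut the $\lambda$-axis itself into dyadic blocks $D_j$, handle the near-diagonal part $|j-k|\le 1$ by the pointwise inequality $|r_nr_m|\le\frac12(|r_n|^2+|r_m|^2)$ plus the short-interval count from Assumption 5, and handle the far part $k\ge j+2$ by Cauchy--Schwarz on each block together with the trivial bound $\min(1,1/|\lambda_n-\lambda_m|)\le 2^{1-k}$. The two arguments pay for the sharp exponent $2-\theta$ in different currency: Meng's scheme pays with $O(1/\epsilon)$ sub-ranges and a genuine $T^{O(\epsilon)}$ loss at each shell boundary, whereas yours pays only $(\log T)^{O(1)}$ from the dyadic/short-interval counting and then absorbs it into $T^\epsilon$ at the end. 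Your route is a bit more self-contained (no auxiliary tail lemma, and the block-level estimate $\sum_{n\in D_j}|r_n|^2\ll 2^{-j(2-\theta)}$ is read directly off Assumption 3), and it makes transparent that the far-block sum in $k$ converges for $\theta<3$ while the sum in $j$ converges precisely for $\theta<2$, which is exactly the range claimed. One small point worth stating explicitly in a final write-up: the bound $|D_j|\ll j\,2^j$ used in the far-block Cauchy--Schwarz is where Assumption 5 enters a second time, via summing the unit-interval count over the $\sim 2^j$ unit intervals covering $D_j$.
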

This Proposition is a formalization of an argument of Meng \cite[Lemma 5, p.298]{Me}. 
Meng considers the special case 
\[
 r_n = \frac{\zeta(\rho_n/k)}{\zeta'(\rho_n)} \text{ and }
 \lambda_n =\gamma_n \text{ where }  k \in \mathbb{Z}_{\ge 2}
 \]
 in his study of the  distribution of $k$-free numbers. 
As a consequence of this proposition,   we also establish a limiting distribution result which improves Corollary 1.3 [b] in \cite{ANS}.

\begin{thm}\label{ANScor}
Let $\varphi: [0,\infty) \to \mathbb{R}$ satisfy \eqref{phiexplicit} and \eqref{EyX}.  
Assume $(\lambda_n)_{n \in \N}$ satisfies Assumption 5 \eqref{ass5}
 and 
assume there exists  $0\leq \theta< 2$ such that the sequences 
$ \mathbf{r}=(r_n)_{n \in \mathbb{N}}$
 and $\bm{\lambda}= (\lambda_n)_{n \in \mathbb{N}} $
  satisfy Assumption 3 \eqref{ass3}.
Then $\varphi(y)$ is a $B^2$-almost periodic function and therefore possesses a limiting distribution.
\end{thm}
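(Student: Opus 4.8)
The plan is to show that $\varphi$ lies in the $B^2$-closure of the space of generalized trigonometric polynomials, with respect to the (one-sided) Besicovitch seminorm $\|f\|_{B^2}^2 := \limsup_{Y\to\infty}\frac1Y\int_{0}^{Y}|f(y)|^2\,dy$. Concretely, I would introduce the partial sums
\[
  S_N(y) := c + 2\Re\Big(\sum_{\lambda_n \le N} r_n e^{i\lambda_n y}\Big),
\]
which are finite exponential sums, hence $B^2$-almost periodic, and prove that $\|\varphi - S_N\|_{B^2} \to 0$ as $N \to \infty$. Once this is done, $\varphi$ belongs to the $B^2$-space by definition, and the second assertion (existence of a limiting distribution) follows from the standard fact that every $B^2$-almost periodic function possesses a limiting distribution, exactly as in \cite[Corollary 1.3]{ANS}; I would simply cite this.

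The crucial device for estimating $\varphi - S_N$ is to evaluate the explicit formula \eqref{phiexplicit} not at a fixed truncation but at the \emph{moving} point $X = e^Y$. This gives, for $y \in [y_0, Y]$,
\[
  \varphi(y) - S_N(y) = 2\Re\Big(\sum_{N < \lambda_n \le e^Y} r_n e^{i\lambda_n y}\Big) + \mathcal{E}(y, e^Y).
\]
By hypothesis \eqref{EyX}, the contribution of $\mathcal{E}(y,e^Y)$ to $\frac1Y\int_{y_0}^{Y}|\cdot|^2\,dy$ tends to $0$ as $Y\to\infty$; and since $\varphi$ is square-integrable on $[0,y_0]$ while $S_N$ is bounded there, the piece $\frac1Y\int_0^{y_0}|\varphi-S_N|^2\,dy$ is $O(1/Y)=o(1)$. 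It therefore remains only to control the mean square of the tail sum $\sum_{N<\lambda_n\le e^Y} r_n e^{i\lambda_n y}$.

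For this I would apply Proposition \ref{MengProp}: cover $[y_0,Y]$ by $\ll Y$ unit intervals $[V,V+1]$ and apply \eqref{Vintbd} with $T=N$ and $X=e^Y$ on each (the bound there is uniform in $X>T$ and in $V$), obtaining
\[
  \int_{y_0}^{Y}\Big|\sum_{N<\lambda_n\le e^Y} r_n e^{i\lambda_n y}\Big|^2\,dy \ll \frac{Y}{N^{2-\theta-\epsilon}}.
\]
Dividing by $Y$ and taking $\limsup_{Y\to\infty}$ yields $\|\varphi - S_N\|_{B^2}^2 \ll N^{-(2-\theta-\epsilon)}$. (If $\theta=0$, first replace it by an arbitrarily small positive value, which is legitimate since Assumption 3 with exponent $0$ implies it with any positive exponent.) Because $0\le\theta<2$, for $\epsilon$ sufficiently small the exponent $2-\theta-\epsilon$ is positive, so letting $N\to\infty$ gives the desired approximation and hence the theorem. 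The only real obstacle — the uniform mean-square bound for the tail — is already packaged in Proposition \ref{MengProp}; beyond that, the single point requiring care is the legitimacy of decoupling the fixed truncation $N$ in $S_N$ from the growing truncation $e^Y$ in the explicit formula, which is precisely what the hypothesis \eqref{EyX} is designed to supply.
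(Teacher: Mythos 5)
Your argument is correct and is essentially the paper's proof: the paper simply cites Corollary 1.3(b) of \cite{ANS} and remarks that the only change is replacing the mean-square bound for the tail (previously valid only for $\theta<3-\sqrt{3}$) by Proposition~\ref{MengProp}, which works for all $\theta<2$. Your write-up fills in the details that the paper leaves to the reference --- in particular the moving truncation $X=e^Y$, the use of \eqref{EyX} to absorb the explicit-formula error, and the unit-interval covering to deduce the $B^2$-convergence of $S_N$ to $\varphi$ --- all of which match the cited proof.
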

This improves Corollary 1.3 (b) of \cite{ANS} since in that article the condition \eqref{ass3} was only valid for $\theta < 3- \sqrt{3}$. 
Observe that the proof of Theorem \ref{ANScor} is exactly the proof of Corollary 1.3 (b) of \cite{ANS}.  The only difference is that the bound (1.13) \cite[p. 4]{ANS} has the condition $\theta <  3- \sqrt{3}$ replaced by $\theta <2$. 
As noted in the proof of Corollary 1.3 (b) \cite[p. 23]{ANS} we are actually using the fact that $\theta <2$ and the proof works verbatim.  This general result is useful for proving  limiting distribution results for number theoretic error terms, including those in the  prime number race game. 
 For instance, Theorem \ref{ANScor} can be used to weaken one of the conditions in Corollary 1.6 of \cite{ANS}.


\section{Key ingredients and outline of the Proof of Theorem \ref{generalsumthm}}

We now give an overview of Lamzouri's argument which establishes large values, assuming Conjecture \ref{GELI} (GELI). 
We define 
\begin{equation}
 \label{FtT}
 F_{ \bm{\lambda}, \mathbf{r}} (t, X):=
  \Phi_{X,  \bm{\lambda}, \mathbf{r}}(e^t)
  =\sum_{0<\lambda_n \leq X} \cos(\lambda_n t+\beta_{n})  |r_{n}|, 
\end{equation} 
where $\beta_{n}=\arg r_{n}$.  
 Note that is suffices to show for  $X$ large that   there exists $t \in [1,X]$ such that
$$ F_{ \bm{\lambda}, \mathbf{r}} (t,e^{2X}) = \sum_{0 < \lambda_n \le e^{2X}} \cos(\lambda_n t + \beta_{n}) |r_{n}|  \ge (\alpha_{-}-\e) (\log \log X)^2.$$
This is since the variable change $e^X \to X$, $t=\log x$ establishes  Theorem \ref{generalsumthm}. 
 The problem is reduced to finding the maximal and minimal values of $ F_{ \bm{\lambda}, \mathbf{r}} (t,e^{2X})$.  
We shall focus on determining its maximal value. 

\begin{enumerate}
\item[(i)]  {\bf Connection to the random model.} \\
 The  Effective Linear Independence Conjecture implies
\begin{equation}
  \label{identity}
  \frac{1}{X} \int_{1}^{X} \exp(s F_{ \bm{\lambda}, \mathbf{r}} (t, T) ) \, dt  \sim  
  \mathbb{E} \Big( \exp \Big( s \sum_{0 < \lambda_n \le T} |r_{n}| \cos(\theta_{n}) \Big) \Big)
\end{equation}
where the $\theta_n$ are I.I.D. random variables, uniformly distributed on $[-\pi,\pi]$,  $T= (\log X)^{1-\e}$ with an explicit error term.  The right hand side is the random moment generating function.   This will be established in  Proposition \ref{laplaceprop} 
and is a consequence of the key lemma, 
 Lemma 
\ref{lemmacosineint}. 

\item[(ii)]  {\bf The random moment generating function is large.} \\
By using probability ideas, we show that the right hand side of \eqref{identity} is large.  
Namely, 
\begin{equation}
   \label{rmgflb}
    \mathbb{E} \Big( \exp \Big( s \sum_{0 < \lambda_n \le T} |r_{n}| \cos(\theta_{n}) \Big) \Big) 
    \gg \exp ( (\alpha_{-}(1-o(1))T \log T )
\end{equation}
where $\alpha_{-}$ is the constant given in Assumption 1 \eqref{ass1}.  This lower bound is obtained by 
using independence of random variables, lower bounds for $I_0$ Bessel functions (Lemma \ref{I0lblemma}), 
and the lower bound for $\sum_{0 < \lambda_n \le T} |r_{n}|$ from Assumption 1 in  \eqref{ass1}. 
\item[(iii)] {\bf Large values of  $ F_{ \bm{\lambda}, \mathbf{r}} (t, T):= \Phi_{T}(e^t)$  with $T = (\log T)^{1-\e}$}.  
 In Proposition \ref{measureprop}: We deduce from \eqref{identity} 
and \eqref{rmgflb}
that 
$$F_{ \bm{\lambda}, \mathbf{r}} (t, T)  \ge (\alpha_{-}  -\e') (\log T)^A$$
for  many values of $t \in [1,X]$ for $T = (\log X)^{1-\e}$. 
\item[(iv)]  {\bf Smoothing}.  In Lemma \ref{LemFejer}, a smoothing with Fejer's Kernel is used 
 to relate $F_{ \bm{\lambda}, \mathbf{r}} (t, (\log X)^{1-\e})$ to an average of $F_{ \bm{\lambda}, \mathbf{r}} (t+u, X)$  where $|u| \le (\log X)^A$. 
\item[(v)]  {\bf Mean square estimate}. In Lemma \ref{LemSecondMomentError} it is demonstrated that $F_{ \bm{\lambda}, \mathbf{r}} (t+u,e^{2X})-F_{ \bm{\lambda}, \mathbf{r}} (t+u,X)$ is small on average for $t \in [1,X]$.  This allows
one to obtain large values of $F_{ \bm{\lambda}, \mathbf{r}} (t+u,e^{2X})$ as desired. 
\end{enumerate}
The final element of Theorem \ref{generalsumthm}  is the establishment of Proposition  \ref{MengProp}. 
\begin{itemize}
\item[(vi)] In Proposition  \ref{MengProp}  we establish the technical bound \eqref{rnrmbound} using an idea of Meng \cite{Me}.
\end{itemize}
 Note that 
Proposition \ref{MengProp} is used in Lemma \ref{LemSecondMomentError}. In addition,   Proposition \ref{MengProp}
is also the key new bound which is used in the proof of the independent result Theorem \ref{ANScor}.

Below we have a diagram of the logical implications in the proof of Theorem \ref{generalsumthm}.
\begin{equation}
 \label{logic1}
 \text{Steps (i), (ii), (iii):}  \, \,
 \mybox{Lemma \ref{lemmacosineint}} \implies \mybox{Proposition \ref{laplaceprop}}
 \implies \mybox{Proposition \ref{measureprop} } 
\end{equation}

\begin{equation}
 \label{logic2}
  \text{Steps (iv), (v):}  \, \,
 \mybox{Lemma \ref{LemFejer}} +   \mybox{Lemma \ref{LemSecondMomentError}}
 +   \mybox{Proposition \ref{measureprop}}
 \implies \mybox{Theorem \ref{generalsumthm} } 
\end{equation}

\begin{equation}
\label{logic3}
     \text{Steps (vi):}  \, \,   \mybox{Proposition \ref{MengProp}}  \implies
    \mybox{Lemma \ref{LemSecondMomentError}}.
\end{equation}
The rest of the article is organized as follows.  
In Section \ref{examples}, several further examples are given. 
In Section \ref{keypropsec}, Proposition \ref{measureprop} will be established along 
with  Lemma \ref{lemmacosineint} and Proposition \ref{laplaceprop}.
In Section \ref{fejersec},   Lemma \ref{LemFejer} and   Lemma \ref{LemSecondMomentError} are established. 
In Section \ref{mainthmsec}  Theorem \ref{generalsumthm} is deduced from 
Lemma \ref{LemFejer}, Lemma \ref{LemSecondMomentError}, and Proposition \ref{measureprop}.
Finally, in Section \ref{Mengsec}, Proposition \ref{MengProp} is established. 


\section{Further examples} \label{examples}
In this section, we provide several examples of error terms and their predicted size. 
\subsection{Prime number theorem for automorphic $L$-functions}
Let $d \in \mathbb{N}$ and let $\piup$ be an irreducible unitary cuspidal automorphic representation of ${\rm GL}_d(\mathbb{A}_\mathbb{Q})$. Let $L(s, \piup)$ be the automorphic $L$-function attached to $\piup$ and 
its associated prime counting function is 
$\psi(x, \piup)=\sum_{n \le x} \Lambda(n) a_{\piup}(n)$.  Properties of such functions may be found  in \cite{LY}.
There exists a constant $0 \le \theta < \frac{1}{2}$ (see \cite[Proposition 4.2]{ANS} such that, for all $x>1$ and $T\geq 2$ we have 
\begin{equation}\label{802}
\psi(x, \piup)-\delta(x, \piup)=R_\piup-\sum_{|\Im(\rho_{\pi})|\leq T}\dfrac{x^{\rho_{\pi}}}{\rho_{\pi}}+
O\Big(\dfrac{x^{\theta+1}\log^2 x}{T}+x^\theta \log{x}+\dfrac{x\log^2{T}}{T\log x}
+\dfrac{x\log{T}}{T}\Big),
\end{equation}
where 
$$\delta(x, \piup)= \left\{  \begin{array}{ll} \frac{x^{1+i\tau_0}}{1+i\tau_0}&{\rm if}~L(s, \piup)=\zeta(s-i\tau_0),\\ 0&{\rm otherwise},\end{array} \right.$$
 $\rho_{\pi}$ runs over the nontrivial zeros of $L(s, \piup)$ and $R_\piup \in \mathbb{C}$.
Choosing $T=X^2$, it  follows from the identity $\Re(\frac{x^{-i \gamma}}{\frac{1}{2} -i \gamma} )
= \Re(\frac{x^{i \gamma}}{\frac{1}{2} +i \gamma} )$ that 
\begin{align*}
  \frac{\Re \psi(x, \piup)- \Re \delta(x,\pi)}{\sqrt{x}}
  = -2 
     \Re \Bigg(
  \sum_{\substack{ 0 < \gamma \le X^2 \\
  \gamma \in S_{\pi}}} \frac{1}{2} \cdot \frac{x^{i \gamma}}{\frac{1}{2}+i \gamma}
   \Bigg) + O(1), 
\end{align*}
for $2 \le x \le X$ where $S_{\pi}= \{ \gamma_{\pi} \ | \ \gamma_{\pi} >0 \}  \cup  \{ -\gamma_{\pi} \ | \ \gamma_{\pi} <0 \}  $.
Since $N_{\pi}(T) \sim \frac{dT}{2 \pi} \log(T)$ as $T \to \infty$, it follows that
\begin{align*}
 \sum_{\substack{0 < \gamma < T \\ \gamma \in S}}
  \frac{1}{| \frac{1}{2}+i \gamma|}
  = \sum_{0 < |\gamma_{\pi}| < T} \frac{1}{|\rho_{\pi}|}
  \sim \sum_{1 \le |\gamma_{\pi}| \le T} \frac{1}{|\gamma_{\pi}|}
  \sim \int_{1}^{T} \frac{1}{t} dN_{\pi}(t) 
  \sim \frac{d}{\pi} \int_{1}^{T} \frac{(\log t)}{t} \, dt 
  \sim \frac{d}{2 \pi} (\log T)^2.
\end{align*}
We label the countable set $S_{\pi}$ as $S_{\pi} = (\lambda_n)_{n \in \mathbb{N}}$
where the sequence is non-decreasing. 
By an application of Theorem \ref{conjthm} we obtain the following. 
\begin{thm}
Assume the Generalized Effective Linear Independence Conjecture (Conjecture \ref{GELI})
for the sequence $S_{\pi} = (\lambda_n)_{n  \in \mathbb{N}}$.  Then we have 
\begin{equation}
 \limsup_{x \to \infty}  \frac{\Re \psi(x, \piup)- \Re \delta(x,\pi)}{\sqrt{x} (\log \log \log x)^2}
 \ge \frac{d}{2 \pi}
 \text{ and }
  \liminf_{x \to \infty}  \frac{\Re \psi(x, \piup)-\Re \delta(x,\pi)}{\sqrt{x} (\log \log \log x)^2}
 \le  -\frac{d}{2 \pi}.
\end{equation}
\end{thm}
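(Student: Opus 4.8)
The strategy is to cast the displayed identity for $\frac{\Re\psi(x,\piup)-\Re\delta(x,\pi)}{\sqrt x}$ into the shape of the explicit formula \eqref{Eexplicit} and then invoke Theorem \ref{conjthm}. Concretely, I would set $\varphi(y)=\frac{\Re\psi(e^y,\piup)-\Re\delta(e^y,\pi)}{e^{y/2}}$, so that $E_{\varphi}(x)=\varphi(\log x)=\frac{\Re\psi(x,\piup)-\Re\delta(x,\pi)}{\sqrt x}$, let $\bm{\lambda}=(\lambda_n)_{n\in\mathbb{N}}$ be the non-decreasing enumeration of $S_{\pi}$ already fixed above, and put $r_n=-\frac12\cdot\frac{1}{\frac12+i\lambda_n}$. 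Taking the truncation parameter in \eqref{802} to be $T=X^2$ and using the computation carried out just before the theorem, one obtains exactly $E_{\varphi}(x)=2\Re\big(\sum_{0<\lambda_n\le X^2}r_n x^{i\lambda_n}\big)+\mathcal{F}(x,X^2)$, where $\mathcal{F}(x,X^2)$ absorbs the $O(\cdots)$ term of \eqref{802} divided by $\sqrt x$ together with $\Re R_{\piup}/\sqrt x$; in particular the constant term is $c=0$.

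The first thing I would verify is the error bound \eqref{errorbd}. Setting $T=X^2$ in the $O$-term of \eqref{802}, dividing by $\sqrt x$ and restricting to $2\le x\le X$, the four contributions are $\ll X^{\theta-3/2}\log^2X$, $\ll x^{\theta-1/2}\log x$, $\ll X^{-3/2}\log^2X$, and $\ll X^{-3/2}\log X$ respectively; since $\theta<\frac12$ each is $O(1)$ uniformly on $[2,X]$ (the second because $x^{\theta-1/2}\log x$ is bounded on $[2,\infty)$), and $\Re R_{\piup}/\sqrt x\ll1$ there as well, so $\mathcal{F}(x,X^2)\ll1$ on $[2,X]$. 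Next I would check Assumptions 1B and 2--5 for $(\bm{\lambda},\mathbf{r})$. Since $|r_n|=\frac12|\frac12+i\lambda_n|^{-1}$, the partial-summation estimate in the text, combined with $N_{\pi}(T)\sim\frac{dT}{2\pi}\log T$, gives $\sum_{0<\lambda_n\le T}|r_n|\sim\frac12\cdot\frac{d}{2\pi}(\log T)^2=\frac{d}{4\pi}(\log T)^2$, so Assumption 1B holds with $\alpha=\frac{d}{4\pi}$ and $A=2$. For Assumption 2, $\lambda_n|r_n|\to\frac12$, whence $\sum_{0<\lambda_n\le T}\lambda_n|r_n|\asymp N_{\bm{\lambda}}(T)\asymp T\log T=o(T(\log T)^2)$. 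For Assumption 3, $\lambda_n^2|r_n|^2=\frac14\lambda_n^2(\frac14+\lambda_n^2)^{-1}\le\frac14$, so $\sum_{0<\lambda_n\le T}\lambda_n^2|r_n|^2\ll N_{\bm{\lambda}}(T)\ll T\log T\ll T^{\theta}$ for any fixed $\theta\in(1,2)$. Assumptions 4 and 5 amount to $N_{\bm{\lambda}}(T)=\#\{\rho_{\pi}:0<|\Im\rho_{\pi}|\le T\}\asymp T\log T$ and to $L(s,\piup)$ having $\ll\log(T+2)$ nontrivial zeros in any unit interval at height $T$; both are standard Riemann--von Mangoldt-type facts for $L(s,\piup)$.

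Having all the hypotheses of Theorem \ref{conjthm} in place, I would conclude $\limsup_{x\to\infty}\frac{E_{\varphi}(x)}{(\log\log\log x)^2}\ge2\alpha=\frac{d}{2\pi}$ and $\liminf_{x\to\infty}\frac{E_{\varphi}(x)}{(\log\log\log x)^2}\le-\frac{d}{2\pi}$, which upon unwinding the definition of $E_{\varphi}(x)$ is precisely the claimed statement. I expect the only genuinely delicate point to be the passage from \eqref{802} to the form \eqref{Eexplicit}: it tacitly uses that the nontrivial zeros of $L(s,\piup)$ lie on the critical line (so that $x^{\rho_{\pi}}/\rho_{\pi}=\sqrt x\cdot x^{i\gamma_{\pi}}/(\tfrac12+i\gamma_{\pi})$ and the identity $\Re\frac{x^{-i\gamma}}{1/2-i\gamma}=\Re\frac{x^{i\gamma}}{1/2+i\gamma}$ can be used to fold negative ordinates into $S_{\pi}$), and it uses that the truncation level $T=X^2$ in \eqref{802} is compatible with the convention underlying \eqref{errorbd}. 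Everything else, namely the verification of Assumptions 1B--5, is routine input from the analytic theory of $L(s,\piup)$.
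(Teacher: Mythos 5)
Your proposal is correct and follows essentially the same route the paper takes: the paper manipulates \eqref{802} with $T=X^2$ and the conjugation identity to put $\frac{\Re\psi(x,\piup)-\Re\delta(x,\pi)}{\sqrt x}$ in the form $2\Phi_{X^2,\bm\lambda,\mathbf r}(x)+O(1)$, computes $\sum_{0<\gamma<T,\,\gamma\in S_\pi}|\tfrac12+i\gamma|^{-1}\sim\frac{d}{2\pi}(\log T)^2$, and then cites Theorem~\ref{conjthm}; you carry out precisely this reduction and additionally spell out the checks of \eqref{errorbd} and Assumptions 1B and 2--5 that the paper leaves implicit, correctly obtaining $\alpha=\frac{d}{4\pi}$, $A=2$, hence $2\alpha=\frac{d}{2\pi}$.
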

This leads to the following conjecture. 
\begin{conj}
For $\piup$ an irreducible unitary cuspidal automorphic representation of ${\rm GL}_d(\mathbb{A}_\mathbb{Q})$, we have 
\begin{equation}
 \limsup_{x \to \infty}  \frac{\Re \psi(x, \piup)- \Re \delta(x,\pi)}{\sqrt{x} (\log \log \log x)^2}
 = \frac{d}{2 \pi}
 \text{ and }
  \liminf_{x \to \infty}  \frac{\Re \psi(x, \piup)- \Re \delta(x,\pi)}{\sqrt{x} (\log \log \log x)^2}
 = -\frac{d}{2 \pi}.
\end{equation}
\end{conj}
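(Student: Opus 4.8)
\emph{Strategy.} The asserted equalities split into two one-sided statements. The inequalities
$\limsup_{x\to\infty}\frac{\Re\psi(x,\piup)-\Re\delta(x,\piup)}{\sqrt x(\log\log\log x)^2}\ge\frac{d}{2\pi}$ and $\liminf_{x\to\infty}(\cdots)\le-\frac{d}{2\pi}$ are already proved (conditionally on GELI for $S_\piup$) by the Theorem stated just before this conjecture, which is itself an application of Theorem \ref{conjthm}. So the plan is to prove the reverse inequalities $\limsup(\cdots)\le\frac{d}{2\pi}$ and $\liminf(\cdots)\ge-\frac{d}{2\pi}$ — equivalently, the upper-bound half of Conjecture \ref{phiconj} specialized to $E_\piup(x)=\bigl(\Re\psi(x,\piup)-\Re\delta(x,\piup)\bigr)/\sqrt x$ — and then to combine the two halves.

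\emph{Verifying the data.} With $T=X^2$ in \eqref{802}, $E_\piup$ has an explicit formula of the form \eqref{Eexplicit} with $c=0$ (the terms $R_\piup x^{-1/2}$ and, if present, the Eisenstein shift, decay), $\bm\lambda$ the non-decreasing enumeration of $S_\piup$, and $r_n=-\tfrac12(\tfrac12+i\lambda_n)^{-1}$; since $\theta<\tfrac12$ in \eqref{802}, a direct estimate shows $\mathcal F(x,X^2)\ll 1$ for $2\le x\le X$, so \eqref{errorbd} holds. Because $|r_n|\asymp\lambda_n^{-1}$, the sequence $\mathbf r$ is essentially decreasing, and using $N_\piup(T)\sim\frac{dT}{2\pi}\log T$ together with the partial-summation computation carried out just above one obtains Assumption 1B with $A=2$ and $\alpha=\frac{d}{4\pi}$ (so $2\alpha=\frac{d}{2\pi}$); Assumption 3 holds for every $\theta\in(1,2)$, since $\lambda_n^2|r_n|^2\asymp1$; Assumption 4 holds; and Assumption 5 is the standard short-interval zero count for $L(s,\piup)$. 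The three hypotheses of Theorem \ref{ANSunpub} also hold, again because $r_n\asymp\lambda_n^{-1}$: (i) $\sum_{\lambda_n>T}|r_n|^2\asymp\int_T^\infty t^{-2}\,dN_\piup(t)\asymp(\log T)/T$; (ii) $\sum_n|r_n|^{1+\e}\asymp\int t^{-1-\e}\,dN_\piup(t)<\infty$; (iii) $|r_n|\asymp\lambda_n^{-1}\ge C\lambda_n^{-1-\e}$.

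\emph{The upper bound.} I would then run the heuristic argument given after Theorem \ref{conjthm} with these sequences. By Theorem \ref{ANScor} (its hypotheses, Assumptions 3 and 5, having just been checked) $\varphi(y)=E_\piup(e^y)$ is $B^2$-almost periodic and has a limiting distribution $\mu$; GELI for $S_\piup$ presupposes $\Q$-linear independence of $S_\piup$, so $\mu$ equals the law of ${\bf X}_{\bf r}=2\sum_n r_n\cos(2\pi\theta_n)$ and the identity \eqref{limdisac} holds. Theorem \ref{ANSunpub} supplies a large-deviation bound $P({\bf X}_{\bf r}\ge V)\le\exp(-\exp(CV^{1/2}))$ for a positive constant $C$ determined by $\alpha$, and, by the symmetry of ${\bf X}_{\bf r}$, the same bound for $P({\bf X}_{\bf r}\le-V)$. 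Choosing $E_n=\{{\bf X}_{\bf r}\ge((1/C)\log\log(n(\log n)^\kappa))^2\}$ with $\kappa>1$ makes $\sum_nP(E_n)<\infty$, so by Borel--Cantelli almost surely only finitely many $E_n$ occur; transferring this through \eqref{limdisac} yields $\limsup_{y\to\infty}\varphi(y)/(\log\log y)^2\le\frac{d}{2\pi}+o(1)$ as $\e\to0$, i.e. $\limsup_{x\to\infty}E_\piup(x)/(\log\log\log x)^2\le\frac{d}{2\pi}$, and the symmetric argument with $-{\bf X}_{\bf r}$ gives $\liminf_{x\to\infty}E_\piup(x)/(\log\log\log x)^2\ge-\frac{d}{2\pi}$. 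Combined with the lower bounds from the preceding Theorem, this gives the two claimed equalities.

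\emph{Main obstacle.} The weak link is the step ``transferring this through \eqref{limdisac}'': Borel--Cantelli is a statement about almost every sample path of the random model, whereas $\varphi(y)$ is a single deterministic function, so one needs the convergence in \eqref{limdisac} to be \emph{uniform} as the threshold $V=V(Y)$ grows, here like $(\log\log Y)^{1/2}$ — equivalently, a quantitative equidistribution (discrepancy) estimate for the flow $y\mapsto(\lambda_ny\bmod 2\pi)_{\lambda_n\le T(V)}$ tested against the thin tail sets $\{{\bf X}_{\bf r}\ge V\}$, together with a modulus-of-continuity bound for $\varphi$ near its extreme values so that one large value forces a short interval of large values. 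Ordinary linear independence gives no such rate, and obtaining one appears to require substantially more than GELI (in the spirit of an effective equidistribution theorem for the associated flow); this is precisely the gap that keeps the statement at the level of a conjecture rather than a theorem. A secondary, milder point is that the constant $C$ in Theorem \ref{ANSunpub} is determined only up to $\e$, so the exactness of the value $\frac{d}{2\pi}$ — as opposed to a two-sided bound $\asymp(\log\log\log x)^2$ — also relies on the large-deviation asymptotic for ${\bf X}_{\bf r}$ being genuinely sharp.
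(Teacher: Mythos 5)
Your proposal is correct and follows essentially the same reasoning the paper uses to motivate this conjecture: the one-sided inequalities come (conditionally on GELI for $S_\piup$) from the Theorem preceding the conjecture, while the reverse inequalities are supported by the Borel--Cantelli heuristic after Theorem \ref{conjthm} applied with $r_n\asymp\lambda_n^{-1}$, $A=2$, $\alpha=\frac{d}{4\pi}$. You also correctly identify the missing uniformity in transferring the probabilistic Borel--Cantelli estimate to the deterministic $\varphi(y)$ as exactly the gap that keeps this a conjecture rather than a theorem.
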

This gives an example of an application of a linear independence conjecture involving 
sequences other than the ordinates of the zeros of zeta. 

\subsection{Primes in arithmetic progressions}
Let $a,q$ be relatively prime positive integers and $\pi(x;q,a) = \#\{p\le x\colon p\text{ prime, }p\equiv a\mod q\}$. Define the normalized error term
\[
\widetilde{E}(x;q,a) = \frac{\log x}{\sqrt x} \Big( \pi(x;q,a)-\frac{\pi(x)}{\phi(q)} \Big).
\]
Then by  \cite[Lemma 2.1]{RS} we have 
\begin{equation}
\widetilde{E}(x;q,a) = -\frac{c(q,a)}{\phi(q)} + \frac{1}{\phi(q)} \mostchisum \bar\chi(a) \frac{\psi(x,\chi)}{\sqrt x} + O\bigg( \frac{1}{\phi(q) \log x} \bigg)
\label{Exqa in terms of Exchi}
\end{equation}
where $\chi \mod q$  ranges over Dirichlet characters modulo $q$ and $\psi(x,\chi) = \sum_{n \le x} \Lambda(n) \chi(n)$.  Further, we have the explicit formula
\begin{equation}
  \label{psixchiexplicit}
  \frac{\psi(x,\chi)}{\sqrt{x}} = - \sum_{\substack{|\gamma| \le T \\
  L(\frac{1}{2}+i \gamma,\chi)=0}} \frac{x^{i \gamma}}{\rho}
  +O \Big(\frac{x \log^2(xT)}{T} + \log x \Big). 
\end{equation}
Combining these formulae, we find that 
\begin{equation}
   \widetilde{E}(x;q,a) = - \frac{1}{\phi(q)} \mostchisum \bar\chi(a) 
   \Bigg(\sum_{\substack{|\gamma| \le T \\
  L(\frac{1}{2}+i \gamma,\chi)=0}} \frac{x^{i \gamma}}{\rho}
  \Bigg) + O\Big( \frac{x \log^2(xT)}{T} + \log x  \Big). 
\end{equation}
A standard calculation shows that 
\[
   \mostchisum \bar\chi(a) 
   \Bigg(\sum_{\substack{0 < |\gamma| \le T \\
  L(\frac{1}{2}+i \gamma,\chi)=0}} \frac{x^{i \gamma}}{\rho}
  \Bigg) = 2\Re \bigg( \mostchisum \chi(a) \Lsum{0<\gamma<T} \frac{x^{i\gamma}}{\frac12+i\gamma} \bigg). 
\]
The proof of this may be found within the proof of \cite[Proposition 2.3, pp.130-132]{FM}.
The contribution from possible zeros at $\gamma=0$ is $O(\log q)$.  Therefore 
choosing $T=X^2$, we obtain 
\begin{equation}
   \widetilde{E}(x;q,a) = - \frac{1}{\phi(q)} 2 \Re \mostchisum \bar\chi(a) 
   \Bigg(\sum_{\substack{0 < \gamma  \le X^2 \\
  L(\frac{1}{2}+i \gamma,\chi)=0}} \frac{x^{i \gamma}}{\rho}
  \Bigg) + O_q ( 1 )
\end{equation}
for $2 \le x \le X$.  This is now in the form to apply Theorem \ref{conjthm} and
it suffices to evaluate
\begin{equation}
  \mostchisum 
   \sum_{\substack{0 < \gamma  \le T \\
  L(\frac{1}{2}+i \gamma,\chi)=0}} \frac{1}{|\rho|}.
\end{equation} 
Let 
\begin{equation}
 N_q(T) = \mostchisum 
   \sum_{\substack{0 < \gamma  \le T \\
  L(\frac{1}{2}+i \gamma,\chi)=0}} 1.
\end{equation}
By \cite[Lemma 2.4]{La}, 
$N_q(T) = \frac{(\phi(q)-1)T \log T}{2 \pi} + O_q(T)$
and thus partial summation  implies  
\begin{equation}
  \mostchisum 
   \sum_{\substack{0 < \gamma  \le T \\
  L(\frac{1}{2}+i \gamma,\chi)=0}} \frac{1}{|\rho|}
  \sim \frac{(\phi(q)-1)}{2 \pi} (\log T)^2. 
\end{equation} 
Let $S_q =  \cup_{\chi \ne \chi_0} \{\gamma >0 \ | \  L(\frac{1}{2}+i \gamma,\chi)=0 \}$
and label the elements of $S_q$ as $(\lambda_n)_{n=1}^{\infty}$ where the sequence is non-decreasing. Then Theorem \ref{conjthm} establishes the following.
\begin{thm}
Assume the Generalized Effective Linear Independence Conjecture (Conjecture \ref{GELI})
for the sequence $S_{q} = (\lambda_n)_{n  \in \mathbb{N}}$.  Then we have 
\begin{equation}
 \limsup_{x \to \infty}  \widetilde{E}(x;q,a)
 \ge  \frac{(1-\frac{1}{\phi(q)})}{2 \pi} 
 \text{ and }
  \liminf_{x \to \infty}  \widetilde{E}(x;q,a)
 \le  -\frac{(1-\frac{1}{\phi(q)})}{2 \pi}.
\end{equation}
\end{thm}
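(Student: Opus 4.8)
The plan is to deduce the theorem from Theorem~\ref{conjthm}, applied to $E_\varphi(x)=\widetilde E(x;q,a)$ and working throughout under GRH for the Dirichlet $L$-functions modulo $q$, as is implicit in writing \eqref{psixchiexplicit} with real ordinates. Taking $T=X^2$ in the computation preceding the statement, one has for $2\le x\le X$
\[
\widetilde E(x;q,a)= c_{q,a}-\frac{2}{\phi(q)}\,\Re\!\left(\mostchisum\bar\chi(a)\sum_{\substack{0<\gamma\le X^2\\ L(\frac12+i\gamma,\chi)=0}}\frac{x^{i\gamma}}{\rho}\right)+\mathcal F(x,X^2),
\]
with $c_{q,a}=-c(q,a)/\phi(q)$ a constant and $\mathcal F(x,X^2)\ll_q 1$ on this range, so that \eqref{errorbd} holds; the same bounds give \eqref{EyX} for the associated $\mathcal E(y,e^Y)$. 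Hence $\widetilde E(x;q,a)$ is an error term of the type \eqref{Eexplicit} with constant term $c=c_{q,a}$, with $\bm\lambda=(\lambda_n)_{n\in\mathbb N}$ the increasing enumeration of $S_q$, and with $r_n=-\tfrac{1}{\phi(q)}\bar\chi(a)\rho_n^{-1}$ whenever $\lambda_n$ is an ordinate of $L(\cdot,\chi)$, where $\rho_n=\tfrac12+i\lambda_n$; since $|\chi(a)|=1$ this gives $|r_n|=\bigl(\phi(q)\,|\rho_n|\bigr)^{-1}$. It therefore suffices to verify that $\bm\lambda$ and $\mathbf{r}$ satisfy Assumptions 1B, 2, 3, 4, 5 and then to invoke Theorem~\ref{conjthm}.

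By \cite[Lemma 2.4]{La} the counting function of $S_q$ satisfies
\[
N_{\bm\lambda}(T)=N_q(T)=\mostchisum\#\bigl\{\,0<\gamma\le T:\ L(\tfrac12+i\gamma,\chi)=0\,\bigr\}=\frac{\phi(q)-1}{2\pi}\,T\log T+O_q(T),
\]
so $N_{\bm\lambda}(T)\asymp_q T\log T$ and Assumption 4 \eqref{ass4} holds; Assumption 5 \eqref{ass5} is the classical bound that the zeros of the $L(s,\chi)$, $\chi$ modulo $q$, with imaginary part in $[T,T+1]$ number $\ll_q\log(T+2)$. For Assumption 1B \eqref{ass1b}, partial summation against $N_q(T)$ (as for $\zeta(s)$, where $\sum_{0<\gamma_n\le T}|\rho_n|^{-1}\sim\tfrac{1}{4\pi}(\log T)^2$) gives
\[
\sum_{0<\lambda_n\le T}|r_n|=\frac{1}{\phi(q)}\sum_{0<\lambda_n\le T}\frac{1}{|\rho_n|}\sim\frac{1}{\phi(q)}\cdot\frac{\phi(q)-1}{4\pi}(\log T)^2=\frac{1}{4\pi}\Bigl(1-\frac{1}{\phi(q)}\Bigr)(\log T)^2,
\]
so Assumption 1B holds with $A=2$ and $\alpha=\tfrac{1}{4\pi}\bigl(1-1/\phi(q)\bigr)$.

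For Assumptions 2 and 3 it is enough to use the trivial bound $\lambda_n/|\rho_n|=\lambda_n/\sqrt{\tfrac14+\lambda_n^2}<1$, which gives $\lambda_n|r_n|\le\phi(q)^{-1}$ and $\lambda_n^2|r_n|^2\le\phi(q)^{-2}$; summing over $0<\lambda_n\le T$ and using $N_q(T)\ll_q T\log T$ yields $\sum_{0<\lambda_n\le T}\lambda_n|r_n|\ll_q T\log T=o\bigl(T(\log T)^2\bigr)$, which is Assumption 2 \eqref{ass2} with $A=2$, and $\sum_{0<\lambda_n\le T}\lambda_n^2|r_n|^2\ll_q T\log T\ll T^{3/2}$, which is Assumption 3 \eqref{ass3} with $\theta=\tfrac32<2$. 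With every hypothesis of Theorem~\ref{conjthm} in place, that theorem gives
\[
\limsup_{x\to\infty}\frac{\widetilde E(x;q,a)}{(\log\log\log x)^{2}}\ge 2\alpha=\frac{1}{2\pi}\Bigl(1-\frac{1}{\phi(q)}\Bigr)\quad\text{and}\quad \liminf_{x\to\infty}\frac{\widetilde E(x;q,a)}{(\log\log\log x)^{2}}\le-\frac{1}{2\pi}\Bigl(1-\frac{1}{\phi(q)}\Bigr),
\]
as asserted. The routine part of the argument is the error-term bookkeeping — checking \eqref{errorbd} and \eqref{EyX} for the truncation $T=X^2$ (resp.\ $T=e^Y$) directly from the explicit formula \eqref{psixchiexplicit} and the character decomposition of $\widetilde E(x;q,a)$. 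The one genuinely delicate point is the identification of the coefficient sequence: to have $N_{\bm\lambda}(T)=N_q(T)$ and $|r_n|=(\phi(q)|\rho_n|)^{-1}$ exactly one needs each $\lambda_n\in S_q$ to be a zero ordinate of a \emph{unique} character $\chi\bmod q$, i.e.\ that no two $L(s,\chi)$ modulo $q$ share a critical zero. This is the expected behaviour and is consistent with GELI, which posits only the $\mathbb Q$-linear independence of $S_q$ and not its multiplicity structure; I would take it as part of the standing conjectural framework, under which the estimates above are exact and the argument goes through verbatim.
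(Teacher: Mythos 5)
Your proof is correct and follows essentially the paper's route: you recast $\widetilde E(x;q,a)$ as $2\Phi_{X^2,\bm\lambda,\mathbf r}(x)+O_q(1)$ via the orthogonality/explicit-formula step, identify $\bm\lambda=S_q$ and $r_n=-\phi(q)^{-1}\bar\chi(a)\rho_n^{-1}$, verify Assumptions 1B and 2--5 from $N_q(T)\sim\frac{\phi(q)-1}{2\pi}T\log T$, and invoke Theorem~\ref{conjthm} with $A=2$, $2\alpha=\frac{1}{2\pi}\bigl(1-\frac{1}{\phi(q)}\bigr)$, which is exactly what the paper does (just more tersely). Two minor remarks: your partial-summation value $\sum_{0<\lambda_n\le T}|\rho_n|^{-1}\sim\frac{\phi(q)-1}{4\pi}(\log T)^2$ is the correct one (the paper's intermediate display has $\frac{\phi(q)-1}{2\pi}$, an apparent slip, though the final constant is right), and your caveat about distinct critical ordinates across characters is a genuine implicit assumption in the paper's argument as well.
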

This leads to the following conjecture. 
\begin{conj}
\begin{equation}
 \limsup_{x \to \infty}  \widetilde{E}(x;q,a)
 =  \frac{(1-\frac{1}{\phi(q)})}{2 \pi} 
 \text{ and }
  \liminf_{x \to \infty}  \widetilde{E}(x;q,a)
 =  -\frac{(1-\frac{1}{\phi(q)})}{2 \pi}.
\end{equation}
\end{conj}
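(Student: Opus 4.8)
The plan is to deduce this conjecture directly from the preceding theorem in the same subsection together with the general heuristic developed in the introduction, in exact parallel with how Conjecture~\ref{Lofxconjecture} was obtained from Theorem~\ref{conjthm} and the large deviation result Theorem~\ref{ANSunpub}. First I would recall the explicit formula just derived,
\[
  \widetilde{E}(x;q,a) = - \frac{2}{\phi(q)}\,\Re \mostchisum \bar\chi(a)
   \Bigg(\sum_{\substack{0 < \gamma  \le X^2 \\ L(\frac{1}{2}+i \gamma,\chi)=0}} \frac{x^{i \gamma}}{\rho}\Bigg) + O_q(1),
\]
which identifies $\widetilde{E}(x;q,a)$ as an error term of the type $E_\varphi(x)$ with $\lambda_n$ enumerating $S_q$ and coefficients $r_n$ essentially $-\tfrac{1}{\phi(q)}\bar\chi(a)\rho^{-1}$ (with the appropriate character $\chi$ attached to each ordinate). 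The preceding theorem already gives, under GELI for $S_q$, the one-sided bounds $\limsup \widetilde{E}(x;q,a) \ge (1-\tfrac1{\phi(q)})/(2\pi)$ and $\liminf \widetilde{E}(x;q,a) \le -(1-\tfrac1{\phi(q)})/(2\pi)$, so the content of the conjecture is that these are sharp.

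Next I would argue, following the heuristic reasoning given after Theorem~\ref{ANSunpub}, that $\widetilde{E}(x;q,a)$ should be modelled by the random variable ${\bf X}_{\bf r} = 2\sum_n r_n \cos(2\pi\theta_n)$ with $\theta_n$ I.I.D.\ uniform on $[0,1]$, under the (expected) linear independence of the union of the ordinate sets of the $L(s,\chi)$. One checks that the relevant coefficient sum satisfies Assumption~1B: by the computation in the excerpt, $\sum_{0<\lambda_n\le T}|r_n| = \tfrac1{\phi(q)}\sum_{\chi\ne\chi_0}\sum_{0<\gamma\le T}|\rho|^{-1} \sim \tfrac{1}{\phi(q)}\cdot\tfrac{\phi(q)-1}{2\pi}(\log T)^2 = \tfrac{1-1/\phi(q)}{2\pi}(\log T)^2$, so $A=2$ and $\alpha = \tfrac{1-1/\phi(q)}{4\pi}$, giving $2\alpha = \tfrac{1-1/\phi(q)}{2\pi}$. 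Then the large deviation estimate (the analogue of \eqref{largedevXphi}) yields $P({\bf X}_{\bf r}\ge V) = \exp(-\exp((\alpha^{1/A}+o(1))V^{1/A}))$, and the Borel--Cantelli argument sketched in the introduction gives the matching upper bound $\limsup_{x\to\infty} \widetilde{E}(x;q,a)/(\log\log\log x)^2 \le 2\alpha$ — except that here $A=2$ so the normalizing power of $\log\log\log x$ is $2$, consistent with the statement. Combining with the lower bound from the preceding theorem produces the claimed equality, and symmetrically for the $\liminf$.

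The main obstacle — and the reason this is stated as a conjecture rather than a theorem — is exactly the one flagged throughout the introduction: the upper bound half relies on the heuristic passage \eqref{limdisac} from the limiting distribution of $\widetilde{E}(x;q,a)$ to the distribution of ${\bf X}_{\bf r}$, which in turn presupposes both that the ordinates of the various $L(s,\chi)$ (for $\chi\ne\chi_0$, $\chi$ ranging mod $q$) are linearly independent over $\mathbb{Q}$ as a single combined set, and that the convergence in \eqref{limdisac} is sufficiently uniform to justify the Borel--Cantelli conclusion. Neither of these is known; moreover the large deviation input requires verifying the technical conditions (i)--(iii) of Theorem~\ref{ANSunpub} for the coefficients $r_n \asymp |\rho_n|^{-1}$ associated to the set $S_q$, which is routine here since these coefficients are decreasing in $|\gamma|$ and one has good control on $N_q(T)$ from \cite[Lemma 2.4]{La}. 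So the honest statement is that the lower-bound inequalities are theorems under GELI, while the reverse inequalities follow from the same circle of heuristics (combined linear independence plus uniform convergence of the limiting distribution) that underlie all the conjectures in this paper; I would present the derivation as such, making the parallel with Conjectures~\ref{Montgomeryconjecture}, \ref{Ngconjecture}, and \ref{Lofxconjecture} explicit.
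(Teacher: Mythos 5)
Your proposal correctly identifies what the paper actually does here: the conjecture is not proved, but motivated by (a) the preceding theorem, which under GELI gives the one-sided inequalities, and (b) the general Borel--Cantelli heuristic from the introduction, which suggests those inequalities are sharp. Your outline matches the paper's logic and your reduction of $\widetilde{E}(x;q,a)$ to the $E_\varphi$ framework with $\lambda_n$ running over $S_q$ and $|r_n| = \tfrac{1}{\phi(q)|\rho|}$ is the right one, so in spirit the answer is ``same approach.''

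Two arithmetic/normalization points are worth flagging, though, because your write-up silently inherits a pair of slips from the paper. First, as stated the conjecture (and the preceding theorem) is missing the normalization by $(\log\log\log x)^2$; you correctly restore it, and you should say explicitly that you are correcting a typo, since otherwise $\limsup_{x\to\infty}\widetilde{E}(x;q,a)$ is simply $+\infty$. Second, your displayed chain
\[
\sum_{0<\lambda_n\le T}|r_n| = \frac{1}{\phi(q)}\mostchisum\sum_{0<\gamma\le T}\frac{1}{|\rho|}
\sim \frac{1-1/\phi(q)}{2\pi}(\log T)^2,
\qquad \alpha = \frac{1-1/\phi(q)}{4\pi},
\]
is internally inconsistent: if the asymptotic in the middle were right, then by Assumption~1B one would have $\alpha = \tfrac{1-1/\phi(q)}{2\pi}$ and hence $2\alpha = \tfrac{1-1/\phi(q)}{\pi}$, which does not match the theorem. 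The resolution is that the paper's intermediate asymptotic has a factor-of-two slip. With $N_q(T)\sim\tfrac{(\phi(q)-1)T\log T}{2\pi}$ (counting only positive ordinates), partial summation gives
\[
\mostchisum\sum_{0<\gamma\le T}\frac{1}{|\rho|} \sim \frac{\phi(q)-1}{4\pi}(\log T)^2,
\]
exactly parallel to the $\psi(x)$ row of Table~1 where $\sum_{0<\gamma_n\le T}|\rho_n|^{-1}\sim\tfrac{1}{4\pi}(\log T)^2$. Then $\alpha = \tfrac{1-1/\phi(q)}{4\pi}$ and $2\alpha = \tfrac{1-1/\phi(q)}{2\pi}$, recovering the stated constant. (In the automorphic example the paper's $\tfrac{d}{2\pi}(\log T)^2$ is correct because there the sum runs over $|\gamma_\pi|<T$, i.e., both signs, and the explicit formula carries a compensating $\tfrac12$; the AP sum runs over positive $\gamma$ only.) Your final value of $\alpha$ is the right one; just fix the asymptotic you derive it from so the two are consistent, and note that this requires correcting the paper's displayed constant.

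One more small remark: you should make explicit that the ``combined linear independence'' assumption needed here is exactly GELI for the merged sequence $S_q$, which is the hypothesis in the preceding theorem; this is a stronger assertion than LI for the ordinates of each $L(s,\chi)$ separately, since it also forbids rational relations mixing ordinates from different characters. You gesture at this, but it is the crux of why the conjecture is stated as a conjecture, and deserves a sentence of its own.
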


\subsection{Classical prime number sums}
The Chebyshev function $\vartheta(x) = \sum_{p \le x} \log p$ satisfies the identity
$\psi(x) = \sum_{j=1}^{\infty} \vartheta(j^{\frac{1}{n}})$ and thus 
\begin{equation}
  \label{psithetaid}
  \psi(x) 
  = \vartheta(x) + x^{\frac{1}{2}} +o(x^{\frac{1}{2}}). 
\end{equation}
Therefore from identity \eqref{psithetaid} and Conjecture \ref{Montgomeryconjecture} we obtain 
\begin{conj} \label{thetaconj}
\begin{equation}
     \label{limsupinftheta}
   \limsup_{x \to \infty} \frac{\vartheta(x) -x}{\sqrt{x} (\log \log \log x)^2} = \frac{1}{2 \pi} \text{ and }
     \liminf_{x \to \infty} \frac{\vartheta(x) -x}{\sqrt{x} (\log \log \log x)^2} = -\frac{1}{2 \pi}.
\end{equation}
\end{conj}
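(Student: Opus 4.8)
The plan is to obtain Conjecture~\ref{thetaconj} as an immediate consequence of Montgomery's Conjecture~\ref{Montgomeryconjecture} together with the elementary identity~\eqref{psithetaid} relating $\psi$ and $\vartheta$. First I would recall the standard decomposition $\psi(x)=\sum_{k\ge 1}\vartheta(x^{1/k})$, which is a finite sum since $\vartheta(x^{1/k})=0$ once $x^{1/k}<2$. Separating the first two terms gives
\[
\psi(x)=\vartheta(x)+\vartheta(x^{1/2})+\sum_{k\ge 3}\vartheta(x^{1/k}).
\]
By the prime number theorem $\vartheta(x^{1/2})=x^{1/2}+o(x^{1/2})$, while crudely $\sum_{k\ge 3}\vartheta(x^{1/k})\ll x^{1/3}\log x=o(x^{1/2})$, there being $O(\log x)$ summands each of size $O(x^{1/3})$. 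This yields exactly~\eqref{psithetaid}, namely $\psi(x)=\vartheta(x)+x^{1/2}+o(x^{1/2})$.

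Next I would subtract $x$ from both sides and divide by $\sqrt{x}\,(\log\log\log x)^2$, obtaining
\[
\frac{\psi(x)-x}{\sqrt{x}\,(\log\log\log x)^2}-\frac{\vartheta(x)-x}{\sqrt{x}\,(\log\log\log x)^2}=\frac{\psi(x)-\vartheta(x)}{\sqrt{x}\,(\log\log\log x)^2}=\frac{1+o(1)}{(\log\log\log x)^2}\longrightarrow 0
\]
as $x\to\infty$. Consequently the two normalized error terms have the same $\limsup$ and the same $\liminf$ as $x\to\infty$, so assuming Conjecture~\ref{Montgomeryconjecture} these common values equal $\tfrac{1}{2\pi}$ and $-\tfrac{1}{2\pi}$ respectively, which is precisely~\eqref{limsupinftheta}.

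There is no real obstacle in this argument; the only point meriting a word of care is confirming that the discrepancy $\psi(x)-\vartheta(x)-\sqrt{x}$ is $o(\sqrt{x})$, so that after normalization by $\sqrt{x}\,(\log\log\log x)^2$ it contributes nothing to the limit superior or inferior. This follows from the prime number theorem applied to $\vartheta(x^{1/2})$ and the trivial bound on the tail $\sum_{k\ge 3}\vartheta(x^{1/k})$, as indicated above; no input beyond~\eqref{psithetaid} and Montgomery's conjecture is needed.
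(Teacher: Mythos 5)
Your argument is correct and follows exactly the route the paper takes: the identity $\psi(x)=\sum_{k\ge 1}\vartheta(x^{1/k})$ (stated in the paper with a typo as $\sum_{j}\vartheta(j^{1/n})$) yields \eqref{psithetaid}, and then Montgomery's Conjecture~\ref{Montgomeryconjecture} transfers to $\vartheta$ because the normalized discrepancy $(\psi(x)-\vartheta(x))/(\sqrt{x}(\log\log\log x)^2)=(1+o(1))/(\log\log\log x)^2\to 0$. You simply fill in the tail estimate and the limsup/liminf transfer that the paper leaves implicit; the underlying reasoning is identical.
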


The Mertens sum  satisfies 
the identity
\begin{equation}
    \sum_{p \le x} \frac{1}{p}=  \log \log x + M 
    + \frac{\vartheta(x)-x}{x \log x} + O \Big( \frac{1}{\log x} \Big)
\end{equation}
where $M=0.26149 72128 4764 \ldots 
$ is the Meissel-Mertens constant (see OEIS: A077761). 
Based on this identity and Conjecture \ref{thetaconj}, we have
\begin{conj}
\begin{equation}
  \label{limsupMertconj}
   \limsup_{x \to \infty} \frac{\sqrt{x} \log x}{(\log \log \log x)^2} \Bigg( \sum_{p \le x} \frac{1}{p} -   \log \log x -M \Bigg) = \frac{1}{2 \pi}
\end{equation}
and 
\begin{equation}
   \label{liminfMertconj}
    \liminf_{x \to \infty}\frac{\sqrt{x} \log x}{(\log \log \log x)^2} \Bigg( \sum_{p \le x} \frac{1}{p} -   \log \log x -M \Bigg)  = -\frac{1}{2 \pi}.
\end{equation}
\end{conj}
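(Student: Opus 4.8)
The plan is to deduce the statement from Conjecture~\ref{thetaconj} (equivalently, via \eqref{psithetaid}, from Montgomery's Conjecture~\ref{Montgomeryconjecture}) together with a refined form of the Mertens identity displayed above, in which the crude unconditional error $O(1/\log x)$ is replaced by a much smaller one; this is legitimate because Conjecture~\ref{thetaconj} entails RH. First I would record the exact identity obtained by Riemann--Stieltjes integration in $\sum_{p\le x}\tfrac1p=\int_{2^-}^{x}\tfrac{d\vartheta(t)}{t\log t}$, integrating by parts and separating off the fluctuating term $\vartheta(t)-t$:
\[
\sum_{p \le x} \frac{1}{p} = \log\log x + M + \frac{\vartheta(x)-x}{x\log x} - \int_x^\infty \bigl(\vartheta(t)-t\bigr)\,\frac{\log t + 1}{t^2\log^2 t}\,dt ,
\]
where $M$ is the Meissel--Mertens constant, collecting the boundary terms together with a convergent tail integral (finite on RH).

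Next I would estimate the remaining tail integral. The pointwise von Koch bound $\vartheta(t)-t\ll\sqrt t\,\log^2 t$ gives only $O(\log x/\sqrt x)$ for it, which is too weak; one must exploit the cancellation coming from the sign changes of $\vartheta(t)-t$. Using the classical second-integral formula $\int_{0}^{x}(\psi(t)-t)\,dt=-\sum_{\rho}\frac{x^{\rho+1}}{\rho(\rho+1)}+O(x)$, the absolute convergence of $\sum_{\rho}|\rho(\rho+1)|^{-1}\ll\sum_{\gamma>0}\gamma^{-2}<\infty$, and the elementary bound $\int_{2}^{x}(\psi(t)-\vartheta(t))\,dt=O(x^{3/2})$, one obtains $W(x):=\int_{2}^{x}(\vartheta(t)-t)\,dt=O(x^{3/2})$ on RH. Integrating the tail integral by parts against $W$ --- noting that $\frac{\log t+1}{t^2\log^2 t}\asymp\frac{1}{t^2\log t}$ has derivative of size $\asymp\frac{1}{t^3\log t}$ --- then gives $\int_x^\infty(\vartheta(t)-t)\frac{\log t+1}{t^2\log^2 t}\,dt=O\bigl(\tfrac{1}{\sqrt x\,\log x}\bigr)$, so that
\[
\sum_{p \le x} \frac{1}{p} - \log\log x - M = \frac{\vartheta(x)-x}{x\log x} + O\Bigl(\frac{1}{\sqrt x\,\log x}\Bigr).
\]

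Finally, multiply through by $\frac{\sqrt x\,\log x}{(\log\log\log x)^2}$. Since $\frac{\sqrt x\,\log x}{x\log x}=\frac{1}{\sqrt x}$, the main term becomes $\frac{\vartheta(x)-x}{\sqrt x\,(\log\log\log x)^2}$ while the error becomes $O\bigl((\log\log\log x)^{-2}\bigr)=o(1)$, whence
\[
\frac{\sqrt x\,\log x}{(\log\log\log x)^2}\Bigl(\sum_{p\le x}\tfrac1p-\log\log x-M\Bigr)=\frac{\vartheta(x)-x}{\sqrt x\,(\log\log\log x)^2}+o(1) .
\]
Taking $\limsup$ and $\liminf$ as $x\to\infty$ and invoking Conjecture~\ref{thetaconj} gives the values $\tfrac1{2\pi}$ and $-\tfrac1{2\pi}$, completing the argument. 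I expect the main obstacle to be exactly the sharpening in the second step: the error in the Mertens identity has to carry the extra factor $1/\log x$, since the more easily obtained bounds $O(1/\sqrt x)$ or $O(\log x/\sqrt x)$ would, after multiplication by $\frac{\sqrt x\,\log x}{(\log\log\log x)^2}$, diverge. This refinement is nonetheless routine on RH through the absolutely convergent explicit formula for $\int_0^x(\psi(t)-t)\,dt$, and requires no hypothesis beyond RH; as with the companion statements in this section, the conclusion is of course conditional on Conjecture~\ref{thetaconj}.
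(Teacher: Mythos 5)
Your proposal is correct, and it follows the same basic route the paper intends---reduce the Mertens sum to $\vartheta(x)-x$ via the Riemann--Stieltjes form of the Mertens identity, normalize, and invoke Conjecture~\ref{thetaconj}---but you have filled in a genuine gap that the paper glosses over. As displayed, the paper records the identity with error term $O(1/\log x)$, which, after multiplying by $\sqrt{x}\log x/(\log\log\log x)^2$, would become $O\bigl(\sqrt{x}/(\log\log\log x)^2\bigr)$ and swamp the main term; the paper's derivation of the conjecture is therefore only heuristic. You correctly identify this obstacle and remove it: the exact identity
\[
\sum_{p \le x} \tfrac{1}{p} = \log\log x + M + \frac{\vartheta(x)-x}{x\log x} - \int_x^\infty \bigl(\vartheta(t)-t\bigr)\,\frac{\log t + 1}{t^2\log^2 t}\,dt
\]
is right, and your partial-summation argument for the tail integral is sound. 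On RH (which Conjecture~\ref{thetaconj} entails) one has $W(x)=\int_2^x(\vartheta(t)-t)\,dt = O(x^{3/2})$ via the absolutely convergent explicit formula for $\int_0^x(\psi(t)-t)\,dt$ together with $\int_2^x(\psi(t)-\vartheta(t))\,dt \ll x^{3/2}$; since the kernel has derivative $\asymp t^{-3}(\log t)^{-1}$, integration by parts against $W$ yields the tail bound $O\bigl(1/(\sqrt{x}\,\log x)\bigr)$, and the boundary term $W(x)g(x)$ is of the same order. After normalization this contributes $O\bigl((\log\log\log x)^{-2}\bigr)=o(1)$, so the $\limsup$ and $\liminf$ coincide with those for $\frac{\vartheta(x)-x}{\sqrt{x}(\log\log\log x)^2}$, which Conjecture~\ref{thetaconj} pins at $\pm\frac{1}{2\pi}$. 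In short, your write-up is a rigorous deduction of the conjecture from Conjecture~\ref{thetaconj}, whereas the paper's version is a sketch with a too-coarse error term; the missing sharpening is exactly the cancellation argument you supply.
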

Note that Lamzouri's result  \eqref{limsuppsilb}  or an application of Theorem \ref{conjthm} will imply inequalities for the $\limsup$ terms in  \eqref{limsupinftheta},  \eqref{limsupMertconj} on the assumption of Conjecture \ref{ELI}.  Similarly, one can establish inequalities for the $\liminf$ terms.

\section{Proof of the key Proposition  \ref{measureprop}}
\label{keypropsec}

One of the key ideas in the study of limiting distributions of functions, is to show that the function under 
consideration can be modelled by a random sum.  In order to study this distribution, one often computes 
high moments or the Laplace transform of the random sum.  
This is an old idea and has been studied by many researchers including Wintner \cite{Wi2}.  
In recent years, this idea has been effectively employed by Granville and Soundararajan \cite{GS1}, \cite{GS2}
and by Lamzouri \cite{La1}, \cite{La2}, \cite{La3}.  What is notable in these results
is that the authors establish very strong uniform bounds for high moments of the distribution under consideration and then use this to exhibit extreme values of the function of interest. 

This section contains the proofs of Step (i), Step (ii), Step (iii)  as outlined above. 
The key result in the article is the following proposition. 
\begin{pro}\label{measureprop}
Assume the Generalized Effective LI conjecture (Conjecture \eqref{GELI}).
Let $ \mathbf{r}=(r_n)_{n \in \mathbb{N}}$
 and $\bm{\lambda}= (\lambda_n)_{n \in \mathbb{N}} $
  be  sequences of complex numbers satisfying 
 Assumption 1 \eqref{ass1} and Assumption 4 \eqref{ass4}. Let $\varepsilon,\e_1>0$ be sufficiently small and fixed.  There exists a positive constant $c_2$ such that for $X$ large and  all real numbers $2\leq T\leq (\log X)^{1-\varepsilon}$ we have
\begin{equation}\label{EqMeasureLargeF} 
\frac{1}{X}\textup{meas} \{t\in [1, X] : F_{ \bm{\lambda}, \mathbf{r}} (t, T)> (\alpha_{-}-\e_1) (\log T)^A\} \gg e^{-c_2 T\log T} ,
\end{equation}
 where meas denotes the Lebesgue measure on $\mathbb{R}$. Furthermore, we also have   
 \begin{equation}\label{EqMeasureSmallF} \frac{1}{X}\textup{meas} \{t\in [1, X] : F_{ \bm{\lambda}, \mathbf{r}} (t, T)<- (\alpha_{-}-\e_1) (\log T)^A\}\gg e^{-c_2 T\log T}.
 \end{equation}
\end{pro}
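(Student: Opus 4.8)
The plan is to follow Lamzouri's strategy of passing through the random model. First I would introduce the moment generating function on the arithmetic side, namely
\[
\frac{1}{X}\int_1^X \exp\!\big(s\,F_{\bm{\lambda},\mathbf{r}}(t,T)\big)\,dt,
\]
and use Conjecture \ref{GELI} together with Lemma \ref{lemmacosineint} to show (this is Proposition \ref{laplaceprop}) that this integral is asymptotic, with an explicit error term, to the random Laplace transform
\[
\mathbb{E}\Big(\exp\Big(s\sum_{0<\lambda_n\le T}|r_n|\cos\theta_n\Big)\Big)
=\prod_{0<\lambda_n\le T} I_0\big(s|r_n|\big),
\]
where the $\theta_n$ are i.i.d.\ uniform on $[-\pi,\pi]$ and $I_0$ is the modified Bessel function. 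The admissible range $2\le T\le(\log X)^{1-\varepsilon}$ is exactly what makes the GELI error term negligible, since the number of terms is $N_{\bm{\lambda}}(T)\ll T\log T$ by Assumption 4, and GELI gives a lower bound of shape $e^{-T^{1+\varepsilon}}$ on the relevant linear forms.

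Next I would produce a lower bound for the random moment generating function. Using the lower bound $I_0(z)\ge \exp\big(z - \tfrac12\log(2\pi z) + O(1/z)\big)$ for $z$ large (Lemma \ref{I0lblemma}; this is the sharper form that yields the constant $\alpha_{-}$ rather than something smaller) and the crude bound $I_0(z)\ge 1$ for small $z$, I would take $s$ of size roughly $(\log T)/\max_n|r_n|$ or, more robustly, optimize $s$ so that
\[
\sum_{0<\lambda_n\le T} \log I_0(s|r_n|) \ge s\sum_{0<\lambda_n\le T}|r_n| - (\text{lower order}) \ge \alpha_{-}(1-o(1))\,s\,(\log T)^A,
\]
invoking the lower bound in Assumption 1 \eqref{ass1} for $\sum_{0<\lambda_n\le T}|r_n|$. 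Choosing $s \asymp T\log T/(\log T)^A$ (or whatever balances the terms; since $A\ge 1$ in the cases of interest one checks the error terms $\sum \log(2\pi s|r_n|)$ and $\#\{n:\lambda_n\le T\}$ are both $o(s(\log T)^A)$ using Assumption 5 and the upper bound in \eqref{ass1}) yields
\[
\frac{1}{X}\int_1^X \exp\!\big(s\,F_{\bm{\lambda},\mathbf{r}}(t,T)\big)\,dt \gg \exp\big((\alpha_{-}-\e/2)\,s\,(\log T)^A\big).
\]

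Finally I would convert this mean lower bound into a measure estimate by a standard Chebyshev-type argument: split the range of integration according to whether $F_{\bm{\lambda},\mathbf{r}}(t,T) > (\alpha_{-}-\e_1)(\log T)^A$ or not. On the complementary set the integrand is at most $\exp\big(s(\alpha_{-}-\e_1)(\log T)^A\big)$, which is smaller than the mean lower bound by a factor $\exp\big(-(\e_1-\e/2)\,s\,(\log T)^A\big)$, so that part of the integral is negligible. Hence the large-$F$ set carries essentially all of the integral, and trivially bounding $\exp(sF)\ll \exp\big(s\sum|r_n|\big)\ll \exp\big(O(s(\log T)^A)\big)$ on that set gives
\[
\frac{1}{X}\,\mathrm{meas}\{t\in[1,X]: F_{\bm{\lambda},\mathbf{r}}(t,T) > (\alpha_{-}-\e_1)(\log T)^A\} \gg \exp\big(-C\,s\,(\log T)^A\big) = \exp(-c_2\,T\log T)
\]
for a suitable $c_2$, using $s(\log T)^A \asymp T\log T$. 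The lower bound \eqref{EqMeasureSmallF} follows identically with $s<0$ (or by symmetry $\theta_n\mapsto\theta_n+\pi$, $F\mapsto -F$). The main obstacle is Step (i): establishing \eqref{identity} with an error term good enough to survive division by the tiny main term $\exp(-c_2 T\log T)$; this is where GELI enters essentially, and where the restriction $T\le(\log X)^{1-\varepsilon}$ is forced. The bookkeeping in the optimization of $s$ — checking that the secondary terms $\log(2\pi s|r_n|)$ summed over $n$ are genuinely $o(s(\log T)^A)$ — also requires care and uses Assumptions 1 and 5.
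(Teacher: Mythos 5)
Your high-level plan — pass to the random model via Proposition~\ref{laplaceprop}, lower-bound $\prod_n I_0(s|r_n|)$, and run a Chebyshev argument using the pointwise upper bound $F_{\bm\lambda,\mathbf r}(t,T)\le\alpha_+(\log T)^A$ — is exactly the structure of the paper's proof, and your choice $s\asymp T(\log T)^{1-A}$ is the same as the paper's $s=c_0T(\log T)^{1-A}$.

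There is, however, a genuine gap in the way you account for the secondary term. You assert that $\#\{n:\lambda_n\le T\}$ can be checked to be $o(s(\log T)^A)$. This is false: Assumption 4 gives $N_{\bm\lambda}(T)\asymp T\log T$, and with your choice of $s$ one has $s(\log T)^A\asymp T\log T$ as well, so the two quantities are of the \emph{same} order, not one $o$ of the other. Since every factor $I_0(s|r_n|)$ you lower-bound by something exponential contributes a bounded multiplicative constant (e.g.\ the $\frac{\e}{2\pi}$ of Lemma~\ref{I0lblemma}, or the $1/\sqrt{2\pi z}$ in your asymptotic version), the cumulative prefactor over all $n\le N_{\bm\lambda}(T)$ would be $\exp(-\Theta(T\log T))$, which is of the same order as the main term $\exp(s\alpha_-(\log T)^A)$ and could wipe it out. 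The paper circumvents exactly this by introducing a truncation parameter $U=T^b$ with $b<1$ arbitrarily close to $1$: the exponential lower bound for $I_0$ is applied only for $\lambda_n\le U$, and for $U<\lambda_n\le T$ the trivial bound $I_0\ge 1$ is used. This caps the number of ``costly'' factors at $N_{\bm\lambda}(U)\ll T^b\log T=o(T\log T)$, while the restriction of the main term to $\lambda_n\le U$ costs only a factor $b^A$ in front of $\alpha_-(\log T)^A$, which is absorbed by taking $b\to 1$. Without this truncation device the lower bound for the random moment generating function does not close, so you should add it to your argument before the Chebyshev step.

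A minor further point: the lower bound you quote, $I_0(z)\ge\exp(z-\tfrac12\log(2\pi z)+O(1/z))$, is the large-$z$ asymptotic and fails for small $z$ (where the right-hand side blows up while $I_0(z)\to 1$). The lemma as proved in the paper is the uniform two-sided bound $I_0(t)\ge\max\bigl(1,\tfrac{\e}{2\pi}e^{t(1-\e^2/2)}\bigr)$, which is what makes the truncation argument clean because one can apply the exponential bound for \emph{every} $\lambda_n\le U$ regardless of the size of $s|r_n|$, without a separate dichotomy on whether $s|r_n|$ is large.
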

This is a direct consequence of the Generalized  Effective LI conjecture, we prove that as $t$ varies in $[1, X]$, the moment generating function of $F_{ \bm{\lambda}, \mathbf{r}} (t, T)$ is close to that of its corresponding probabilistic random model in a large range, if $T\leq (\log X)^{1-\varepsilon}$.   This is a generalization of Proposition 2.1 of \cite{La0} to arbitrary sequences 
$\bm{\lambda}= (\lambda_n)_{n \in \mathbb{N}} $ and $ \mathbf{r}=(r_n)_{n \in \mathbb{N}}$. One key difference with 
\cite[Proposition 2.1]{La0} is that the constant $c_1$ there is given by $(\alpha_{-}-\e_1)$ here where $\alpha_{-}$ is the 
constant from Assumption 1 \eqref{ass1}.   This is  the done by careful bookkeeping which keeps track of the exact size of the constant $c_2$ and uses an improved bound for the modified $I_0$ Bessel function and a careful choice of one of the parameters $U$ in the argument.  

Note that Proposition \ref{laplaceprop} and Lemma \ref{lemmacosineint} below are essentially  Proposition 3.1 and Lemma 3.2 of \cite{La0}.   The proofs will not be given as they are nearly identical. 
The only differences are that the general sequence $\bm{\lambda}= (\lambda_n)_{n \in \mathbb{N}} $ replaces the ordinates of the zeros of zeta $(\gamma_n)_{n \in \mathbb{N}} $ and instead of invoking Conjecture \ref{ELI} (ELI), Conjecture \ref{GELI} (GELI) is invoked.  These are the key results that connect the sums $F_{ \bm{\lambda}, \mathbf{r}} (t, T)$ to their random versions and this is where GELI is crucially used.  

\begin{pro}\label{laplaceprop}
Assume the Generalized Effective LI conjecture (Conjecture \ref{GELI}).
Let $\bm{\lambda}= (\lambda_n)_{n \in \mathbb{N}} $ and $ \mathbf{r}=(r_n)_{n \in \mathbb{N}}$
  be  sequences of complex numbers satisfying 
 Assumption 1 \eqref{ass1}.
  Let $\varepsilon>0$ be small and fixed. Let $X$ be large and $2\leq T\leq (\log X)^{1-\varepsilon}$ be a real number.  Let 
\begin{equation}
  \label{c0}
c_0 = \frac{1}{27 \pi \alpha_{+}}.
\end{equation} 
For all real numbers $s$ with $|s|\leq c_0 T(\log T)^{1-A} $ we have 
\begin{equation}
  \label{integraltorandom}
\frac{1}{X}\int_2^X \exp\big(s F_{ \bm{\lambda}, \mathbf{r}} (t, T) \big)dt= \ex\left(\exp\left(s\sum_{0<\lambda_n \leq T}  |r_{n}|\cos(\theta_{n})\right)\right)+ O\left(e^{-N_{\bm{\lambda}}(T)}\right).
\end{equation}
\end{pro}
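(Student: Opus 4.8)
The plan is to expand $\exp(sF_{\bm{\lambda},\mathbf{r}}(t,T))$ as a power series in $s$ and match the resulting moments against the moments of the random sum $\sum_{0<\lambda_n\le T}|r_n|\cos(\theta_n)$, exactly as in \cite[Proposition 3.1]{La0}. First I would write, for an integer $K$ to be chosen later,
\[
\exp\big(sF_{\bm{\lambda},\mathbf{r}}(t,T)\big)=\sum_{k=0}^{K}\frac{s^k}{k!}F_{\bm{\lambda},\mathbf{r}}(t,T)^k+O\!\left(\frac{|s|^{K+1}}{(K+1)!}\Big(\sum_{0<\lambda_n\le T}|r_n|\Big)^{K+1}e^{|s|\sum|r_n|}\right),
\]
using the crude bound $|F_{\bm{\lambda},\mathbf{r}}(t,T)|\le\sum_{0<\lambda_n\le T}|r_n|$. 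Integrating $\frac1X\int_2^X$ term by term reduces the problem to comparing the integral moments $\frac1X\int_2^X F_{\bm{\lambda},\mathbf{r}}(t,T)^k\,dt$ with the random moments $\ex\big((\sum|r_n|\cos\theta_n)^k\big)$, which is precisely the content of Lemma \ref{lemmacosineint}: that lemma, proved via GELI, shows each monomial $\prod\cos(\lambda_{n_j}t+\beta_{n_j})$ integrates over $[2,X]$ to its expected value $\ex(\prod\cos\theta_{n_j})$ up to an error controlled by the GELI lower bound $C_\ep e^{-T^{1+\ep}}$ on linear combinations of the $\lambda_n$ with $\lambda_n\le T$.

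The key point is that the denominators appearing when we invoke GELI — the small quantities $|\sum \ell_n\lambda_n|$ — are bounded below by $C_\ep e^{-T^{1+\ep}}$, and since the $\ell_n$ that arise here satisfy $|\ell_n|\le k\le K$, we need $K\le N_{\bm{\lambda}}(T)$ for the GELI hypothesis to apply. Summing the per-monomial errors over all $k\le K$ monomials (there are at most $N_{\bm{\lambda}}(T)^k$ of them for the $k$-th moment) produces a total error of size roughly $X^{-1}$ times something like $e^{K\log N_{\bm{\lambda}}(T)}e^{T^{1+\ep}}$, which one arranges to be $O(e^{-N_{\bm{\lambda}}(T)})$ by choosing $K$ of order $N_{\bm{\lambda}}(T)$ (using $N_{\bm{\lambda}}(T)\ll T\log T$ from Assumption 4 and the range $T\le(\log X)^{1-\ep}$, so $e^{T^{1+\ep}}$ is a small power of $X$). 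Simultaneously, the tail term from truncating the exponential series is controlled: with $|s|\le c_0T(\log T)^{1-A}$ and $\sum|r_n|\le\alpha_+(\log T)^A$ from Assumption 1, one has $|s|\sum|r_n|\le c_0T\log T\cdot\alpha_+$, and the choice $c_0=\tfrac1{27\pi\alpha_+}$ makes $|s|\sum|r_n|/(K+1)$ small enough that $\frac{|s|^{K+1}}{(K+1)!}(\sum|r_n|)^{K+1}e^{|s|\sum|r_n|}$ is also $O(e^{-N_{\bm{\lambda}}(T)})$ after applying Stirling. The completed random series $\sum_{k>K}\frac{s^k}{k!}\ex((\sum|r_n|\cos\theta_n)^k)$ is bounded the same way.

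The main obstacle is the careful bookkeeping of the two competing errors: the truncation error grows in $K$ while the GELI-based arithmetic error per moment shrinks, and one must verify there is a single admissible choice of $K$ (essentially $K\asymp N_{\bm{\lambda}}(T)$, or a small constant multiple thereof) for which both are simultaneously $O(e^{-N_{\bm{\lambda}}(T)})$ across the whole range $2\le T\le(\log X)^{1-\ep}$ and $|s|\le c_0T(\log T)^{1-A}$. Since the paper states this proof is "nearly identical" to \cite[Proposition 3.1]{La0}, I would not reproduce it in full; instead I would indicate that the only substantive change is replacing $(\gamma_n)$ by the general sequence $\bm{\lambda}$ and invoking GELI (Conjecture \ref{GELI}) in place of ELI, noting that Assumptions 1 and 4 supply exactly the bounds on $\sum|r_n|$ and $N_{\bm{\lambda}}(T)$ that the argument requires, and refer to Lemma \ref{lemmacosineint} for the arithmetic input.
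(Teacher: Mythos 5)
Your proposal is correct and follows essentially the same approach as the paper (which itself defers almost entirely to Lamzouri's Proposition 3.1 in \cite{La0} and only details the origin of the constant $c_0$): expand $\exp(sF)$ in a power series, truncate at $K \asymp N_{\bm{\lambda}}(T)$, match moments term-by-term via Lemma \ref{lemmacosineint} (the GELI input), and use Stirling together with the bounds $\sum |r_n| \le \alpha_+(\log T)^A$ (Assumption 1) and $N_{\bm{\lambda}}(T) \ge \kappa_- T\log T$ (Assumption 4) to check that $c_0 = \frac{1}{27\pi\alpha_+}$ makes the tail decay like $e^{-N_{\bm{\lambda}}(T)}$. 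You also correctly note the implicit reliance on Assumption 4, which the proposition's statement omits but which the paper's own proof uses.
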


Proposition \eqref{laplaceprop} is a consequence of the following lemma. 
This lemma demonstrates  on the  Generalized Effective LI conjecture (Conjecture \ref{GELI}) that the sequence $(\cos(\lambda_n  t+ \beta_{n}))_{n \in \mathbb{N}}$
behaves like  $ ( \cos(\theta_n) )_{n \in \mathbb{N}}$ where the $\theta_n$ are 
I.I.D. random variables.  Observe that GELI allows for the error term of the size 
$O(X^{-2/3})$. 

\begin{lem}\label{lemmacosineint}
Assume the Generalized Effective LI conjecture (Conjecture \eqref{GELI}).
Let $\bm{\lambda}= (\lambda_n)_{n \in \mathbb{N}} $
  be a  sequence of complex numbers.
  Let $\varepsilon>0$ be small and fixed. Let $X$ be large and $2\leq T\leq (\log X)^{1-\varepsilon}$ be a real number. Let $k\leq N_{\bm{\lambda}}(T)$ be a positive integer and $0<\lambda_{n_1}, \dots, \lambda_{n_k}<T$ are elements  (not necessarily distinct) of the sequence $\bm{\lambda}= (\lambda_n)_{n \in \mathbb{N}} $.  For any real numbers $\beta_1, \dots, \beta_k$, we have 
$$ \frac1X\int_{1}^X\cos(\lambda_{n_1} t+\beta_1)\cos(\lambda_{n_2} t+\beta_2)\cdots \cos(\lambda_{n_k} t+\beta_k) dt= \ex\left(\cos(\theta_{n_1}) \cdots\cos(\theta_{n_k})\right) + O\left(X^{-2/3}\right). 
$$
\end{lem}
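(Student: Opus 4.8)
The statement is an equidistribution / linear-independence estimate: the trigonometric product on the left, averaged over a long interval, matches the expectation of the corresponding product of cosines of independent uniform angles, up to a power-saving error. I would expand each cosine into exponentials, multiply out, and track the resulting frequencies; the average over $[1,X]$ of $e^{i\mu t}$ is $O(1/(X|\mu|))$ when $\mu\neq 0$ and is exactly $1$ when $\mu=0$, so everything comes down to separating the ``diagonal'' (zero-frequency) terms from the rest and showing the off-diagonal frequencies cannot be too close to $0$.

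\textbf{Step 1: Exponential expansion.} Write each factor as $\cos(\lambda_{n_j}t+\beta_j)=\tfrac12(e^{i\beta_j}e^{i\lambda_{n_j}t}+e^{-i\beta_j}e^{-i\lambda_{n_j}t})$. Expanding the product of $k$ such factors gives $2^{-k}\sum_{\bm{\sigma}\in\{\pm1\}^k} e^{i\sum_j \sigma_j\beta_j}\, e^{i t\sum_j \sigma_j\lambda_{n_j}}$. Doing the same on the probabilistic side, $\cos(\theta_{n_j})=\tfrac12(e^{i\theta_{n_j}}+e^{-i\theta_{n_j}})$, yields $2^{-k}\sum_{\bm{\sigma}} \ex\!\left(e^{i\sum_j \sigma_j\theta_{n_j}}\right)$, where the $\theta_n$ are independent uniform on $[-\pi,\pi]$ and repeated indices refer to the \emph{same} random variable. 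For a fixed sign pattern $\bm\sigma$, group the indices $n_1,\dots,n_k$ by their value; if some distinct value $\lambda_m$ appears with a net signed multiplicity $c_m=\sum_{j:\, n_j=m}\sigma_j$, then $\ex(e^{i\sum\sigma_j\theta_{n_j}})=\prod_m \ex(e^{ic_m\theta_m})$, and $\ex(e^{ic\theta_m})$ equals $1$ if $c=0$ and $0$ otherwise. So the right-hand side is exactly $2^{-k}$ times the number of sign patterns $\bm\sigma$ for which every distinct value has net multiplicity zero.

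\textbf{Step 2: Matching diagonals and bounding the off-diagonal.} On the left-hand side, the average $\frac1X\int_1^X e^{it\Lambda(\bm\sigma)}\,dt$ with $\Lambda(\bm\sigma)=\sum_j\sigma_j\lambda_{n_j}=\sum_m c_m\lambda_m$ equals $1+O(1/X)$ precisely when $\Lambda(\bm\sigma)=0$, and is $O(1/(X|\Lambda(\bm\sigma)|))$ otherwise. The crucial point is that, by the \emph{linear independence over $\mathbb{Q}$} of the sequence $\bm\lambda$ (which is part of the hypothesis that GELI holds for $\bm\lambda$; note the statement should be read with $\bm\lambda$ a linearly independent sequence, consistent with Conjecture \ref{GELI}), $\Lambda(\bm\sigma)=\sum_m c_m\lambda_m=0$ forces $c_m=0$ for every $m$ — exactly the condition isolating the diagonal terms on the probabilistic side. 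Hence the diagonal contributions on both sides agree identically, each equal to $2^{-k}\cdot\#\{\bm\sigma: \text{all } c_m=0\}$, and what remains is to bound the total off-diagonal error $2^{-k}\sum_{\bm\sigma:\Lambda(\bm\sigma)\neq0} O(1/(X|\Lambda(\bm\sigma)|))$.

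\textbf{Step 3: The power-saving error via GELI.} Here I invoke the Generalized Effective LI conjecture. Each nonzero $\Lambda(\bm\sigma)$ is an integer linear combination $\sum_{0<\lambda_m\le T}\ell_m\lambda_m$ with $|\ell_m|\le k\le N_{\bm\lambda}(T)$, so \eqref{GELIlbd} gives $|\Lambda(\bm\sigma)|\ge C_\e e^{-T^{1+\e}}$. Since $T\le(\log X)^{1-\e}$, we have $T^{1+\e}\le(\log X)^{(1-\e)(1+\e)}=(\log X)^{1-\e^2}=o(\log X)$, hence $e^{-T^{1+\e}}\ge X^{-o(1)}$; a small enough choice of $\e$ makes $e^{-T^{1+\e}}\gg X^{-1/3}$ for $X$ large. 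There are at most $2^k$ sign patterns and $2^k\le 2^{N_{\bm\lambda}(T)}$, which by Assumption 4 (or just $N_{\bm\lambda}(T)\ll T\log T\ll(\log X)\log\log X$) is also $X^{o(1)}$. Therefore the off-diagonal total is $\ll 2^{-k}\cdot 2^k\cdot \frac{1}{X}\cdot X^{1/3}\ll X^{-2/3}$, which gives the claimed bound. \textbf{The main obstacle} is bookkeeping the uniformity: one must check that $k$ (hence the number of sign patterns and the size of the admissible integer coefficients $\ell_m$) stays within the regime where GELI applies and where $2^k$ and $e^{T^{1+\e}}$ are both absorbed into $X^{o(1)}$ — this is where the hypotheses $k\le N_{\bm\lambda}(T)$ and $T\le(\log X)^{1-\e}$ are used in an essential, coordinated way. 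This is exactly the content of Lemma 3.2 of \cite{La0} adapted from $\bm\gamma$ to a general linearly independent $\bm\lambda$, with ELI replaced by GELI.
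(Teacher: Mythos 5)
Your proof reconstructs exactly the argument the paper defers to (the paper gives no proof of this lemma, saying only that it is ``nearly identical'' to Lemma~3.2 of Lamzouri's \cite{La0}): expand each cosine into exponentials, split the $2^k$ sign patterns $\bm\sigma$ into diagonal ($\sum_j\sigma_j\lambda_{n_j}=0$) and off-diagonal, match the diagonal to the probabilistic side using linear independence, and bound the off-diagonal using $|\frac1X\int_1^X e^{it\Lambda}\,dt|\ll (X|\Lambda|)^{-1}$ together with the GELI lower bound $|\Lambda|\ge C_\e e^{-T^{1+\e}}\ge X^{-1/3}$ (which applies since the integer coefficients $c_m$ satisfy $|c_m|\le k\le N_{\bm\lambda}(T)$). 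The prefactor $2^{-k}$ cancels the $2^k$ sign patterns exactly, so you never actually need the parenthetical remark that $2^k=X^{o(1)}$ — it is true but superfluous.

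One point you gloss over: in Step~2 you assert the diagonal contributions ``agree identically, each equal to $2^{-k}\cdot\#\{\bm\sigma:\text{all }c_m=0\}$,'' but on the left-hand side each diagonal sign pattern carries a phase $e^{i\sum_j\sigma_j\beta_j}$, which equals $1$ only when the $\beta_j$ depend solely on the index $n_j$ (so that $\sum_j\sigma_j\beta_j=\sum_m c_m\beta_m=0$ once all $c_m=0$). For genuinely arbitrary $\beta_1,\dots,\beta_k$ with repeated indices (e.g.\ $k=2$, $n_1=n_2$, $\beta_1=0$, $\beta_2=\pi/2$) the left side averages to $o(1)$ while the right side is $\tfrac12$, so the identity as literally written fails. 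This is a looseness in the lemma's statement (in the application, $\beta_j=\arg r_{n_j}$, so the convention is automatic), but a complete write-up should say so rather than silently assuming it.
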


The moment generating function in \eqref{integraltorandom} is closely related to the modified Bessel function
 of the first kind   defined by  
\begin{equation}
  \label{I0a}
 I_0(t) =  \sum_{n=0}^{\infty} \frac{1}{(k!)^2} \Big( \frac{t}{2} \Big)^{2k}
\end{equation}
 (see \cite[p.375 , eq. 9.6.10]{AS})
 and which also possesses the integral representation  
\begin{equation}
  \label{I0b}
 I_0(t) = \frac{1}{2 \pi} \int_{-\pi}^{\pi} e^{t \cos(u)} \, du
\end{equation}
 (see \cite[p.181, eq. (4)]{Wa}).
We require the following lower bound. 
\begin{lem} \label{I0lblemma}
Let $\e \in (0,\frac{\pi}{2})$. 
For all $t \ge 0$, 
\begin{equation}
 \label{I0lb}
 I_0(t) \ge \max\Big(1, \frac{\e}{2 \pi} e^{t(1-\e^2/2)} \Big).
\end{equation}
\end{lem}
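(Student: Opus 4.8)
The plan is to prove the two lower bounds separately and then take the maximum. The bound $I_0(t) \ge 1$ is immediate from the power series representation \eqref{I0a}: every term $\frac{1}{(k!)^2}(t/2)^{2k}$ is non-negative for $t \ge 0$, and the $k=0$ term equals $1$. So the only real content is the inequality $I_0(t) \ge \frac{\e}{2\pi} e^{t(1-\e^2/2)}$.

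For that, I would start from the integral representation \eqref{I0b}, namely $I_0(t) = \frac{1}{2\pi}\int_{-\pi}^{\pi} e^{t\cos u}\,du$. Since the integrand is positive, one may restrict the range of integration to any subinterval and only decrease the value: for a parameter $\delta \in (0,\pi)$ to be chosen,
\[
 I_0(t) \ge \frac{1}{2\pi}\int_{-\delta}^{\delta} e^{t\cos u}\,du.
\]
On $[-\delta,\delta]$ we have $\cos u \ge \cos\delta$, and more usefully the elementary inequality $\cos u \ge 1 - u^2/2 \ge 1 - \delta^2/2$ for all real $u$. Hence $e^{t\cos u} \ge e^{t(1-\delta^2/2)}$ throughout the interval, and the integral is at least $2\delta \cdot e^{t(1-\delta^2/2)}$, giving
\[
 I_0(t) \ge \frac{\delta}{\pi} e^{t(1-\delta^2/2)}.
\]
Choosing $\delta = \e$ (valid since $\e \in (0,\tfrac{\pi}{2}) \subset (0,\pi)$) yields $I_0(t) \ge \frac{\e}{\pi}e^{t(1-\e^2/2)} \ge \frac{\e}{2\pi}e^{t(1-\e^2/2)}$, which is even slightly stronger than claimed. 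Combining with $I_0(t)\ge 1$ gives \eqref{I0lb}.

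There is essentially no obstacle here; the only point requiring a little care is the choice of the truncation parameter and the use of $\cos u \ge 1 - u^2/2$ rather than the cruder $\cos u \ge \cos\delta$, since the former is what produces the clean exponent $1 - \e^2/2$ matching the statement. (One could alternatively integrate $e^{t\cos u}$ against the bound $\cos u \ge \cos\e$ on $[-\e,\e]$ and then invoke $\cos\e \ge 1-\e^2/2$; the two routes are equivalent.) I would also remark that the factor $\frac{1}{2}$ in $\frac{\e}{2\pi}$ is not optimal and is kept only because it suffices for the applications in Proposition~\ref{measureprop}; the sharper constant $\frac{1}{\pi}$ drops out of the argument above for free.
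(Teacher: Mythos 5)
Your proof is correct and takes essentially the same approach as the paper: restrict the Bessel integral to a short interval near the origin and bound $\cos u$ below by $1-\e^2/2$. The only (immaterial) difference is that the paper integrates over $[0,\e]$ using $\cos u \ge \cos\e$ followed by $\cos\e \ge 1-\e^2/2$, whereas you integrate symmetrically over $[-\e,\e]$ and pick up the extra factor of $2$ that you correctly note is not needed.
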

\begin{proof}
Note that for all $t \in \mathbb{R}$, the bound $I_0(t) \ge 1$ follows directly from the series representation \eqref{I0a}. 
Let $\e \in (0,\frac{\pi}{2})$.  Then  for $t \ge 0$,  we have 
\begin{equation}\label{Bessellb}
I_0(t)= \frac{1}{2\pi}\int_{-\pi}^{\pi}e^{t\cos u} du\geq \frac{1}{2\pi} \int_0^{\e} e^{t\cos u} du \geq
\frac{1}{2 \pi}  e^{t \cos(\e)} \int_{0}^{\e} \, du 
= \frac{\e}{2 \pi}  e^{t \cos(\e)}.
\end{equation}
The second lower bound in \eqref{I0lb} follows  from the inequality $\cos(\e) \ge 1-\e^2/2$, which is valid for all $\e \in \mathbb{R}$, hence for all 
$\e \in (0,\frac{\pi}{2})$.  
\end{proof}
Note that Lamzouri has proven the  better bound $I_0(t) \ge \frac{e^t}{10 \pi t}$ in \cite[Lemma 4.2, p. 162]{La}.

We now prove Proposition \ref{laplaceprop}.
\begin{proof}[Proof of Proposition \ref{laplaceprop}]  
In the proof of the corresponding result  in \cite{La0} $c_0$ arises from the inequality:

\begin{align*}
  E_1 & \ll  \sum_{k>R} \frac{(|s|H(T))^k}{k!}   \ll \frac{1}{\sqrt{2 \pi R}}\sum_{k > R}  \Big( \frac{e|s| H(T)}{k} \Big)^k
\end{align*}
where  $E_1$ is the error term in \cite[Proposition 3.1]{La0}, 
$H(T) = \sum_{0 < \lambda_n \le T} r_n$, and $R=N_{ \bm{\lambda}}(T)$, defined in \eqref{Nlambda}.  
Note that  Stirling's 
 formula $k! \sim \sqrt{2 \pi k} (k/e)^k$ has been applied. 
Further
\[
  a  :=  \frac{e|s| H(T)}{k}   \le 
   \frac{e|s| H(T)}{N_{ \bm{\lambda}}(T)} \le \frac{e|s| \alpha_{+}(\log T)^A}{
   \kappa_{-} T (\log T) } \le
   \frac{ \pi e|s| \alpha_{+}(\log T)^{A-1}}{ \kappa_{-}T}.
\]
by the upper bound in \eqref{ass1} and the lower bound
  $N_{ \bm{\lambda}}(T) \ge \kappa_{-} T (\log T)$, for $T$ sufficiently large. 
Now suppose $|s| \le c_0 T (\log T)^{1-A}$ and thus $|a| \le \frac{\pi e \alpha_{+}} {\kappa_{-}} c_0 \le  9 \pi  \alpha_{+} c_0 = 1/3$
since we recall that  $c_0 = \frac{1}{27 \pi \alpha_{+}}$ in \eqref{c0}. 
It follows that
\[
  E_1 \ll \frac{1}{\sqrt{2 \pi R}}\sum_{k > R}  a^k \ll a^{R} = e^{R \log(a)} \le e^{-R \log(3)} < e^{-R}. 
\]
\end{proof}

We end this section by proving Proposition \ref{measureprop}. 

\begin{proof}[Proof of Proposition \ref{measureprop}]
We begin with the proof of  \eqref{EqMeasureLargeF}.
Let $\e >0$ and $s=c_0 T(\log T)^{1-A}$, where $c_0$  is defined in \eqref{c0}.
We begin by establishing a lower bound for the expected value on the right hand side of \eqref{integraltorandom}.
By the independence of the $\theta_n$ it follows that
\begin{equation}
  \label{expectationeven}
 \ex\left(\exp\left(s\sum_{0<\lambda_n \leq T}  |r_{n}|\cos(\theta_{n})\right)\right)= \prod_{0<\lambda_n \leq T}\ex\Big(\exp\big(s |r_{n}|\cos(\theta_{n})\big)\Big)=\prod_{0<\lambda_n \leq T} I_0(s|r_{n}|), 
\end{equation}
where $I_0(t)$ is defined in \eqref{I0a}.  We choose $U=T^{b}$ with $b \in (0,1)$ and we shall let $b$ be arbitrarily close to $1$.  For those $\lambda_n \in (U,T]$, we apply the 
the first bound in \eqref{I0lb} and for those $\lambda_n \in (0, U]$ we apply  the second bound 
in \eqref{I0lb} to obtain
\begin{equation}\label{LowerBoundLaplace}
\ex\left(\exp\left(s\sum_{0<\lambda_n \leq T}  |r_n|\cos(\theta_n)\right)\right) \geq  
  \prod_{0<\lambda_n\leq U} C_{\varepsilon} e^{s|r_n|(1-\varepsilon^2/2)}
\end{equation}
where $C_{\e}= \frac{\e}{2 \pi}$.  We have
\begin{align*}
    \prod_{0<\lambda_n\leq U} C_{\varepsilon} e^{s|r_n|(1-\varepsilon^2/2)}
    = \exp \Big(s(1-\varepsilon^2/2) \sum_{0<\lambda_n\leq U} |r_n| +\log(C_{\varepsilon}) 
    N_{ \bm{\lambda}}(U) 
   \Big)
\end{align*}
and then observe that 
\begin{align*}
  s(1-\varepsilon^2/2) \sum_{0<\lambda_n\leq U} |r_n| +\log(C_{\varepsilon}) N_{ \bm{\lambda}}(U)  
 & \ge  (c_0  T (\log T)^{1-A})  (1-\varepsilon^2/2)  \alpha_{-} (\log U)^{A}  + \log(C_{\varepsilon}) N_{ \bm{\lambda}}(U)   \\
 & = c_0 \alpha_{-} \cdot b^A \cdot (1-\e^2/2) ( T \log T) + O_{\e}(T^b \log T) \\ 
 & \ge s \alpha_{-} (1-\e') ( \log T)^{A}
\end{align*}
where $b$ is taken sufficiently close to 1, $\e'$ is taken sufficiently small,  and $T$ is sufficiently large.
Note that we have applied the upper bound for $N_{\lambda}(U)$ in \eqref{ass4}. Therefore we have 
\begin{equation}
  \ex\left(\exp\left(s\sum_{0<\lambda_n \leq T}  |r_n|\cos(\theta_n)\right)\right)
  \ge  \exp \Big(  s  \alpha_{-}  (1-\e')( \log T)^{A} \Big). 
 \end{equation}
 Combining this last bound with  Proposition  \ref{laplaceprop}, we deduce that 
\begin{equation}\label{LowerBoundLaplaceF}
\frac{1}{X}\int_1^X \exp\big(s F_{ \bm{\lambda}, \mathbf{r}} (t, T) \big)dt \gg 
\exp ( sc_3 ( \log T)^{A} )
\end{equation}
where 
\begin{equation}
 \label{c3}
  c_3 =  \alpha_{-}  (1-\e'). 
\end{equation}
Now let
\begin{equation}
  \label{AplusX}
 \mathcal{A}_{+}(X) = \Big\{ t \in [1,X] \ | \ F_{ \bm{\lambda}, \mathbf{r}} (t, T)\geq (1-\e')  c_3 (\log T)^A \Big\}.
\end{equation}
Then we have 
\begin{align*}
\int_1^X \exp\big(s F_{ \bm{\lambda}, \mathbf{r}} (t, T) \big)dt &= \int_{\mathcal{A}_{+}(X)} \exp\big(s F_{ \bm{\lambda}, \mathbf{r}} (t, T) \big)dt+ \int_{[1, X]\setminus\mathcal{A}_{+}(X)} \exp\big(s F_{ \bm{\lambda}, \mathbf{r}} (t, T) \big)dt\\
&= \int_{\mathcal{A}_{+}(X)}  \exp\big(s F_{ \bm{\lambda}, \mathbf{r}} (t, T) \big)dt +O\left(X\exp\left( (1-\e') c_3 s (\log T)^A\right)\right).
\end{align*}
Hence, it follows from \eqref{LowerBoundLaplaceF} that 
\begin{equation}
 \label{intlb}
\int_{\mathcal{A}_{+}(X)}  \exp\big(s F_{ \bm{\lambda}, \mathbf{r}} (t, T) \big)dt\gg 
X \exp\left(c_3 s(\log T)^A\right).
\end{equation}
By Assumption 1, $
 F_{ \bm{\lambda}, \mathbf{r}} (t, T) \le \alpha_{+} (\log T)^A$
and thus 
\begin{align*}
  \int_{\mathcal{A}_{+}(X)}  \exp\big(s F_{ \bm{\lambda}, \mathbf{r}} (t, T) \big)dt
  \le      \int_{\mathcal{A}_{+}(X)}  \exp\big(s  \alpha_{+} (\log T)^A \big)dt
  \le  \textup{meas}(\mathcal{A}_{+}(X)) \exp\big(s  \alpha_{+} (\log T)^A \big).
\end{align*}
It follows that 
\begin{align*}
  \textup{meas}(\mathcal{A}_{+}(X))  & \ge \exp\big(-s  \alpha_{+} (\log T)^A \big)   \int_{\mathcal{A}(X)}  \exp\big(s F_{ \bm{\lambda}, \mathbf{r}} (t, T) \big)dt \\
  &  \ge \exp\big(-s  \alpha_{+} (\log T)^A \big) \cdot X \exp\left(c_3 s(\log T)^A\right) \\
  &  \ge X  \exp\big(-s  (\alpha_{+} -c_3) (\log T)^A \big) 
\end{align*}
by an application of \eqref{intlb}. Therefore we have 
\begin{equation}
  \label{measlb}
 \frac{1}{X}   \textup{meas}(\mathcal{A}_{+}(X))   
 \ge \exp \big(- c_2  T (\log T) \big) 
 \text{ where } c_2 = c_0 (\alpha_{+} -c_3).
\end{equation}
Note that $c_2 >0$ since $c_3 < \alpha_{-} < \alpha_{+}$. Finally, 
note that by \eqref{c3}
\begin{equation}
  \label{numsineq}
   (1-\e')  c_3 =(1-\e')^2 \alpha_{-} > \alpha_{-} (1-\e_1),
\end{equation}
where $\e_1$ is sufficiently small. Hence combining  \eqref{AplusX}, \eqref{measlb}, and
\eqref{numsineq}, we establish \eqref{EqMeasureLargeF}. 

The proof of  \eqref{EqMeasureSmallF} is very similar.  In this case, we would define $s=-c_0 T(\log T)^{1-A}$. 
Since $I_0(t)$ is an even function, it follows that the right hand side of \eqref{expectationeven} is even. 
Thus we obtain 
$$ \frac{1}{X}\int_2^X \exp\big(sF_{ \bm{\lambda}, \mathbf{r}} (t, T))dt \gg \exp(-c_3s (\log T)^A).$$
We would then consider the set 
\begin{equation}
 \mathcal{A}_{-}(X) = \Big\{ t \in [1,X] \ | \ F_{ \bm{\lambda}, \mathbf{r}} (t, T)\le -(1-\e')  c_3 (\log T)^A \Big\}. 
\end{equation}
Following, exactly the same argument as above, we deduce \eqref{measlb} with $\mathcal{A}_{+}(x)$
replaced by $\mathcal{A}_{-}(x)$ and thus we also have \eqref{EqMeasureSmallF}.
\end{proof}

\section{Smoothing the sum}
\label{fejersec}

This section contains Steps (iv) and (v) of the argument. 
A standard technique in analytic number is to smooth an unsmoothed sum.  The first proofs of the prime number theorem make use of this idea \cite{VP1}, \cite{VP2}.   There are many possible ways to smooth a sum. For instance, see \cite[section 5.1, pp.142-144]{MV} for some standard choices.  In this section we make use of the Fejer kernel given by 
\begin{equation}
  \label{Fejer}
K(u):= \left(\frac{\sin(\pi u)}{\pi u}\right)^2.
\end{equation}
Its Fourier transform is the sawtooth function 
\begin{equation}\label{Fejerft}
\widehat{K}(t) := 
\int_{-\infty}^{\infty} K(u) e^{2\pi i t u} du= \max(0, 1-|t|).
\end{equation}
We shall employ the following properties of $K$. 
\begin{lem} We have for any positive reals $T,Z$
\begin{equation}
  \label{Kprops}
 \int_{-\infty}^{\infty} \frac{T}{2\pi} K\left(\frac{Tu}{2\pi}\right)du =1, 
 \end{equation}
 \begin{equation}
   \label{Kprops2}
  \int_{-Z}^{Z} \frac{T}{2\pi} K\left(\frac{Tu}{2\pi}\right)du =1 + O
  \Big(\frac{1}{ZT} \Big). 
\end{equation}
\end{lem}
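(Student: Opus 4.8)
The two identities to prove concern the scaled Fejér kernel $\frac{T}{2\pi}K\!\left(\frac{Tu}{2\pi}\right)$, where $K(u)=\left(\frac{\sin\pi u}{\pi u}\right)^2$, so I would proceed as follows.

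\medskip

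\textbf{The full-line identity \eqref{Kprops}.} First I would substitute $v = \frac{Tu}{2\pi}$, so that $dv = \frac{T}{2\pi}\,du$ and the integral becomes simply $\int_{-\infty}^{\infty} K(v)\,dv$. Then the claim reduces to the classical fact that $\int_{-\infty}^{\infty}\left(\frac{\sin\pi v}{\pi v}\right)^2 dv = 1$. I would justify this either by quoting $\widehat K(0) = \max(0,1-0) = 1$ from \eqref{Fejerft} (since $\int K(u)\,du = \widehat K(0)$), or by the standard Dirichlet-integral evaluation. This is routine.

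\medskip

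\textbf{The truncated identity \eqref{Kprops2}.} After the same substitution $v=\frac{Tu}{2\pi}$, the integral over $[-Z,Z]$ becomes $\int_{-ZT/2\pi}^{ZT/2\pi} K(v)\,dv$. Writing this as $\int_{-\infty}^\infty K(v)\,dv - 2\int_{ZT/2\pi}^{\infty} K(v)\,dv = 1 - 2\int_{ZT/2\pi}^{\infty}\left(\frac{\sin\pi v}{\pi v}\right)^2 dv$, I then need to bound the tail. Using $|\sin\pi v|\le 1$ gives $\int_{W}^{\infty}\left(\frac{\sin\pi v}{\pi v}\right)^2 dv \le \int_W^\infty \frac{dv}{\pi^2 v^2} = \frac{1}{\pi^2 W}$ with $W = ZT/2\pi$, which is $O(1/(ZT))$. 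This yields exactly the stated error term $1 + O(1/(ZT))$.

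\medskip

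\textbf{Main obstacle.} There is essentially no obstacle here; this is a two-line change-of-variables lemma. The only thing to be slightly careful about is making sure the implied constant in $O(1/(ZT))$ is genuinely absolute (independent of $T$ and $Z$ separately), which the crude tail bound $\frac{1}{\pi^2 W}$ manifestly provides. I would simply present the substitution, invoke \eqref{Fejerft} at $t=0$ for the normalization, and then estimate the tail by the trivial bound $|\sin|\le 1$.
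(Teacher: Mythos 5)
Your proposal is correct and follows essentially the same route as the paper: the first identity by the substitution $v=Tu/(2\pi)$ and $\widehat K(0)=1$, and the second by splitting off the tail and bounding it via $K(v)\ll v^{-2}$. The only cosmetic difference is that you change variables before estimating the tail while the paper estimates the tail directly in the $u$-variable; the estimates are identical in substance.
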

\begin{proof}
The equality \eqref{Kprops} is given by the change of variable  $v=  \tfrac{T}{2\pi} u$:
\begin{equation}
   \int_{-\infty}^{\infty} \frac{T}{2\pi} K\left(\frac{Tu}{2\pi}\right)du
= \int_{-\infty}^{\infty}  K\left(v\right)du= \widehat{K}(0)= 1.
\end{equation}
The formula in \eqref{Kprops2} is 
\begin{align*}
   \int_{-Z}^{Z} \frac{T}{2\pi} K\left(\frac{Tu}{2\pi}\right)du
   & = \int_{-\infty}^{\infty} \frac{T}{2\pi} K\left(\frac{Tu}{2\pi}\right)du - \int_{|u| >Z}  \frac{T}{2\pi} K\left(\frac{Tu}{2\pi} 
   \right)du \\
   & = 1+ O \Big( T \int_{|u| \ge Z} (Tu)^{-2}  \Big) \\
   & = 1+ O \Big(\frac{1}{TZ} \Big),  
\end{align*}
by \eqref{Kprops} and the bound $K(u) \ll |u|^{-2}$ for $u \ne 0$. 

\end{proof}

The following key identity can be proven exactly as in \cite[Lemma 2.2]{La0}. 
\begin{lem}\label{LemFejer}
Let $ \mathbf{r}=(r_n)_{n \in \mathbb{N}}$
 and $\bm{\lambda}= (\lambda_n)_{n \in \mathbb{N}} $
  be  sequences of complex numbers satisfying 
 Assumption 1 \eqref{ass1}.
 Let $Y\geq T\geq 2$ be real numbers. For any real number $Z\geq (\log Y)^A$ we have 
\begin{equation}\label{FejerMain}
F_{ \bm{\lambda}, \mathbf{r}} (t, T) = \int_{-Z}^{Z} \frac{T}{2\pi} K\left(\frac{Tu}{2\pi}\right) F_{ \bm{\lambda}, \mathbf{r}}(t+u, Y) du +O\left( \frac{1}{T} \sum_{0<\lambda_n \leq T} \lambda_n |r_n| \right).
\end{equation}
\end{lem}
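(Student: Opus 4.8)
The plan is to expand the right-hand side of \eqref{FejerMain} term-by-term over the frequencies $\lambda_n$, use the Fourier-transform identity \eqref{Fejerft} for the Fej\'er kernel to pick out exactly the frequencies with $\lambda_n \le T$, and then control the two sources of error: the truncation of the kernel integral to $[-Z,Z]$, and the difference between the sawtooth weight $\widehat K(\lambda_n/(2\pi)/ (T/2\pi)) = \max(0,1-\lambda_n/T)$ and the sharp cutoff $\mathbbm 1_{\lambda_n \le T}$.

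First I would write $F_{\bm\lambda,\mathbf r}(t+u,Y) = \sum_{0<\lambda_n\le Y}\cos(\lambda_n(t+u)+\beta_n)|r_n|$ and interchange the (finite) sum with the integral over $[-Z,Z]$. For each $n$, expanding $\cos(\lambda_n(t+u)+\beta_n)$ and using that $\frac{T}{2\pi}K(\frac{Tu}{2\pi})$ is even in $u$, the integral against the kernel reproduces $\cos(\lambda_n t+\beta_n)$ times $\int_{-Z}^{Z}\frac{T}{2\pi}K(\frac{Tu}{2\pi})\cos(\lambda_n u)\,du$. Extending this last integral to all of $\mathbb R$ gives, by \eqref{Fejerft} with the scaling $v=\frac{T}{2\pi}u$, exactly $\widehat K(\lambda_n/T) = \max(0,1-\lambda_n/T)$, which vanishes for $\lambda_n \ge T$. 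The tail $\int_{|u|>Z}$ is $O(T\int_{|u|\ge Z}(Tu)^{-2}du) = O(1/(TZ))$ uniformly in $n$, exactly as in the proof of \eqref{Kprops2}; summing this over $0<\lambda_n\le Y$ and using Assumption 1 \eqref{ass1} (so $\sum_{0<\lambda_n\le Y}|r_n|\ll(\log Y)^A \le Z/(T)\cdot T = Z$, using $Z\ge(\log Y)^A$ and $T\ge 2$) keeps the total truncation error acceptably small — in fact absorbed into the stated error term after a further comparison below.

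Next I would handle the discrepancy between the Fej\'er weight and the sharp indicator. For $0<\lambda_n\le T$ we have $\max(0,1-\lambda_n/T) = 1 - \lambda_n/T$, so
\[
\int_{-Z}^{Z}\frac{T}{2\pi}K\!\Big(\frac{Tu}{2\pi}\Big)F_{\bm\lambda,\mathbf r}(t+u,Y)\,du
= \sum_{0<\lambda_n\le T}\Big(1-\frac{\lambda_n}{T}\Big)\cos(\lambda_n t+\beta_n)|r_n| + O\!\Big(\frac{1}{TZ}\sum_{0<\lambda_n\le Y}|r_n|\Big).
\]
Subtracting $F_{\bm\lambda,\mathbf r}(t,T) = \sum_{0<\lambda_n\le T}\cos(\lambda_n t+\beta_n)|r_n|$ leaves $-\frac1T\sum_{0<\lambda_n\le T}\lambda_n\cos(\lambda_n t+\beta_n)|r_n|$, whose absolute value is $\le \frac1T\sum_{0<\lambda_n\le T}\lambda_n|r_n|$; combined with the truncation error (which by $Z\ge(\log Y)^A$ and Assumption 1 is $\ll (\log Y)^A/(TZ)\ll 1/T \ll \frac1T\sum_{0<\lambda_n\le T}\lambda_n|r_n|$, since the latter sum is $\gg \lambda_1|r_1|$ and one may assume it is bounded below) this yields \eqref{FejerMain}.

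I expect the only genuinely delicate point to be bookkeeping the two error terms so they both fit under the single clean bound $O\big(\frac1T\sum_{0<\lambda_n\le T}\lambda_n|r_n|\big)$: the tail-of-kernel term is controlled by $Z\ge(\log Y)^A$ together with the upper bound in Assumption 1, while the main structural term comes out automatically from the sawtooth shape of $\widehat K$. Since all of this is carried out in \cite[Lemma 2.2]{La0} for the special sequence $\bm\gamma$ and the argument uses nothing about that sequence beyond Assumption 1 and the elementary properties \eqref{Fejerft}, \eqref{Kprops}, \eqref{Kprops2} of the Fej\'er kernel, the proof transcribes verbatim to general $\bm\lambda,\mathbf r$.
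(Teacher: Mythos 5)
Your reconstruction is correct and is exactly the Fej\'er--kernel argument that the paper invokes (it simply defers to \cite[Lemma 2.2]{La0}, whose proof is the one you describe): expand $F_{\bm\lambda,\mathbf r}(t+u,Y)$ over frequencies, use evenness of the kernel to reduce to $\int_{-Z}^{Z}\frac{T}{2\pi}K(\frac{Tu}{2\pi})\cos(\lambda_n u)\,du$, extend to $\mathbb R$ to get $\widehat K(\lambda_n/T)=\max(0,1-\lambda_n/T)$ which kills $\lambda_n>T$, bound the tail by $O(1/(TZ))$ and the sawtooth-vs-indicator discrepancy by $\frac1T\sum_{0<\lambda_n\le T}\lambda_n|r_n|$. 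The only point worth flagging is the absorption of the $O(1/T)$ truncation error into $O\big(\frac1T\sum_{0<\lambda_n\le T}\lambda_n|r_n|\big)$: this does require $\sum_{0<\lambda_n\le T}\lambda_n|r_n|\gg 1$, which follows (for $T$ large enough) from the lower bound in Assumption 1 together with $\lambda_n\ge\lambda_1>0$, rather than just from "$\gg\lambda_1|r_1|$" (which could vanish); you gesture at this but it is worth stating cleanly. Otherwise the argument transcribes verbatim from the cited lemma, as you say.
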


The following lemma is a direct consequence of Proposition \ref{MengProp}.  The proof follows
\cite{La0}.  The only difference is that we may apply the weaker condition given by Assumption 3 \eqref{ass3}. 
In \cite{La0} the condition was $\theta < 3-\sqrt{3}$.

\begin{lem}\label{LemSecondMomentError}
Let $ \mathbf{r}=(r_n)_{n \in \mathbb{N}}$
 and $\bm{\lambda}= (\lambda_n)_{n \in \mathbb{N}} $
  be  sequences of complex numbers satisfying 
 Assumption 3 \eqref{ass3} and Assumption 5 \eqref{ass5}. 
Let $X, Y\geq 2$ be real numbers and $h,Z$ are positive real numbers.  
There exists a positive constant $\eta$ such that 
 \begin{equation}
   \label{doubleintbd}
 \frac{1}{X}\int_1^{X} \left|\int_{-Z}^Z h K(hu)
 \Bigg( \sum_{Y <\lambda_n \leq e^{2X}} e^{i \lambda_n (t+u)} r_n \Bigg) 
  du\right|^2dt \ll Y^{-\eta}.
  \end{equation}
 \end{lem}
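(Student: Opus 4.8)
The plan is to expand the square and integrate in $t$ first, isolating the arithmetic of the frequencies $\lambda_n$. Writing $G(t) := \int_{-Z}^{Z} h K(hu)\big(\sum_{Y<\lambda_n\le e^{2X}} e^{i\lambda_n(t+u)} r_n\big)\,du$, I would first interchange the $u$-integral and the finite sum over $n$ to get $G(t) = \sum_{Y<\lambda_n\le e^{2X}} r_n e^{i\lambda_n t}\, w_n$, where $w_n := \int_{-Z}^{Z} h K(hu) e^{i\lambda_n u}\,du$. Since $\widehat{K}(t)=\max(0,1-|t|)$ and the full-line integral of $hK(hu)e^{i\lambda u}$ equals $\widehat{K}(\lambda/(2\pi h))$, which is bounded by $1$, and since the tail $|u|>Z$ contributes $O(1/(hZ))$ by the bound $K(u)\ll |u|^{-2}$, we obtain $|w_n|\le 1 + O(1/(hZ)) \ll 1$ uniformly in $n$. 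This reduces matters to estimating $\frac1X\int_1^X |G(t)|^2\,dt$ with $|w_n|\ll 1$.

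Next I would compute $\frac1X\int_1^X |G(t)|^2\,dt = \sum_{n}\sum_{m} r_n \bar r_m w_n \bar w_m \cdot \frac1X\int_1^X e^{i(\lambda_n-\lambda_m)t}\,dt$, both sums over $Y<\lambda_n,\lambda_m\le e^{2X}$. The diagonal-type bound on the $t$-integral is the standard one: $\big|\frac1X\int_1^X e^{i(\lambda_n-\lambda_m)t}\,dt\big| \ll \min\big(1, \frac{1}{X|\lambda_n-\lambda_m|}\big) \le \min\big(1,\frac{1}{|\lambda_n-\lambda_m|}\big)$ for $X\ge 1$. Combining this with $|w_n\bar w_m|\ll 1$ gives
\[
  \frac1X\int_1^X |G(t)|^2\,dt \ll \sum_{Y<\lambda_n\le e^{2X}}\sum_{Y<\lambda_m\le e^{2X}} |r_n r_m|\,\min\Big(1,\frac{1}{|\lambda_n-\lambda_m|}\Big).
\]
Now I would invoke Proposition \ref{MengProp}, specifically the bound \eqref{rnrmbound}, with $T$ there taken to be $Y$ and $X$ there taken to be $e^{2X}$: this is exactly a sum of the form $\sum_{T<\lambda_n\le X}\sum_{T<\lambda_m\le X}|r_n r_m|\min(1,1/|\lambda_n-\lambda_m|) \ll T^{-(2-\theta-\epsilon)}$. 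Hence the double sum is $\ll Y^{-(2-\theta-\epsilon)}$, and since $0\le\theta<2$ we may set $\eta := 2-\theta-\epsilon > 0$ for $\epsilon$ small enough, giving \eqref{doubleintbd}.

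The only subtlety — and the step I expect to require the most care — is verifying the uniform bound $|w_n|\ll 1$ rigorously, since $hK(hu)$ is a nonnegative approximate identity of total mass $1$ but we truncate it to $[-Z,Z]$; one must check that the oscillatory factor $e^{i\lambda_n u}$ does not cause cancellation issues going the wrong way, which it cannot here because we only need an upper bound and $|e^{i\lambda_n u}|=1$ makes $|w_n|\le \int_{-Z}^Z hK(hu)\,du \le \int_{-\infty}^\infty hK(hu)\,du = 1$ immediately. So in fact the bound $|w_n|\le 1$ is trivial, and no tail estimate is even needed for the $w_n$; the genuine content of the lemma is entirely the passage from the double integral to the double sum over frequency pairs and then the application of Proposition \ref{MengProp}. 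The role of Assumption 5 \eqref{ass5} is hidden inside that Proposition (it controls the number of $\lambda_n$ in short intervals), and Assumption 3 \eqref{ass3} supplies the exponent $\theta$; neither needs to be re-derived here. This matches the claim in the paper that the proof follows \cite{La0} verbatim except that the weaker hypothesis $\theta<2$ now suffices in place of $\theta<3-\sqrt3$.
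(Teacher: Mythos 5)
Your proposal is correct and matches the paper's argument in all essentials: both expand the square, use non-negativity of the Fejér kernel so that its integral over $[-Z,Z]$ is at most $1$, bound the $t$-integral by $\min(1,1/|\lambda_n-\lambda_m|)$, and then invoke \eqref{rnrmbound} from Proposition~\ref{MengProp} with $T=Y$. The only cosmetic difference is the order of operations — you pull the $u$-integral inside the sum first to produce weights $w_n$ with $|w_n|\le 1$, whereas the paper expands $|\cdot|^2$ into a double $u,v$-integral, bounds the inner $t$-integral uniformly by $Y^{-\eta}$, and then integrates out $h^2K(hu)K(hv)$; both routes are equivalent.
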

 
 \begin{proof}
 Expanding out the square we find that the left hand side of \eqref{doubleintbd}
 is 
 \begin{equation}
   \label{lhsexpansion}
\int_{-Z}^Z\int_{-Z}^Z h^2 K(hu) K(hv) \left(\frac{1}{X}\int_2^X
\Bigg( \sum_{Y <\lambda_n \leq e^{2X}} e^{i \lambda_n (t+u)} r_n \Bigg)
\overline{\Bigg( \sum_{Y <\lambda_n \leq e^{2X}} e^{i \lambda_n (t+u)} r_n \Bigg)}
 dt \right) du dv.
\end{equation}
The inner integral is 
\begin{align*} 
& \frac{1}{X}\int_1^X \sum_{Y<\lambda_m \leq e^{2X}} e^{i\lambda_m (t+u)}r_m \sum_{Y<\lambda_n \leq e^{2X}} e^{-i\lambda_n (t+v)} \overline{r_n}  dt
& = \sum_{Y<\lambda_m, \lambda_n \leq e^{2X}}e^{i(\lambda_m u-\lambda_n v)} r_{m} \overline{r_{n}} \frac{1}{X}\int_1^X e^{i(\lambda_m-\lambda_n) t}dt\\
& \ll \sum_{Y<\lambda_m, \lambda_n \leq e^{2X}} |r_m r_n|\min\left(1, \frac{1}{|\lambda_m-\lambda_n|}\right) \ll  Y^{-\eta}
\end{align*}
where $\eta=2-\theta-2\epsilon$
by an application of Lemma \ref{MengProp}.  Plugging this back in \eqref{lhsexpansion} we find that  
\begin{align*}
 \frac{1}{X}\int_1^{X} \left|\int_{-Z}^Z h K(hu)
 \Bigg( \sum_{Y <\lambda_n \leq e^{2X}} e^{i \lambda_n (t+u)} r_n \Bigg)
  du\right|^2dt 
  & \ll Y^{-\eta} \int_{-Z}^Z\int_{-Z}^Z h^2 K(hu) K(hv) du dv  \\
  & = Y^{-\eta} \Big(  \int_{-Z}^Z  h K(hu)  du \Big)^2 \\
  & \le Y^{-\eta}.  
\end{align*}
 
 \end{proof}

\section{Proof of Theorem \ref{generalsumthm} and Corollaries \ref{Mofxcorr}, \ref{Lofxcorr}}
\label{mainthmsec}

As indicated in the logical diagram \eqref{logic2}, 
Theorem \ref{generalsumthm} will now be deduced from Proposition \ref{measureprop}, Lemma \ref{LemFejer}, and Lemma \ref{LemSecondMomentError}.
 \begin{proof}[Proof of Theorem \ref{generalsumthm}]
 In this argument $\delta, \delta' ,\delta'', \e, \e_2, \e',\e''$ are sufficiently small positive numbers. 
  We set  $Z=(\log X)^A$ and define 
 \begin{equation}
 \mathcal{E}(X) = \Big\{
 t \in  [\sqrt{X}, X-\sqrt{X}] \ | \ \left|\int_{-Z}^Z \frac{T}{2\pi} K\left(\frac{Tu}{2\pi}\right) 
\big(F_{ \bm{\lambda}, \mathbf{r}}(t+u, e^{2X})-F_{ \bm{\lambda}, \mathbf{r}}(t+u, X)\big) du\right|>1
 \Big\}.
 \end{equation}
 By an application of  Lemma \ref{LemSecondMomentError}, we find that 
 \begin{equation}\label{MeasureEpsilon}
\begin{aligned}\text{meas}(\mathcal{E}(X)) &\leq \int_1^X \left|\int_{-Z}^Z \frac{T}{2\pi} K\left(\frac{Tu}{2\pi}\right) \big(F_{ \bm{\lambda}, \mathbf{r}}(t+u, e^{2X})-F_{ \bm{\lambda}, \mathbf{r}}(t+u, X)\big) du\right|^2dt\\
& \leq \int_1^X \left|\int_{-Z}^Z \frac{T}{2\pi} K\left(\frac{Tu}{2\pi}\right) 
\Bigg( \sum_{X <\lambda_n \leq e^{2X}} e^{i \lambda_n (t+u)} r_n \Bigg)
du\right|^2dt \\
&  \ll X^{1-\eta},
\end{aligned}
\end{equation}
where $\eta$ is the constant from Lemma \ref{LemSecondMomentError}.
Let  $T=(\log X)^{1-\delta}$ where $\delta$ is a sufficiently small positive number. 
We define the following set on which $F_{ \bm{\lambda}, \mathbf{r}} (t, T)$ exhibits 
large positive values
\begin{equation}
  \mathcal{A}_{+}(X) = \Big\{ t\in  [\sqrt{X}, X-\sqrt{X}] \ | \ F_{ \bm{\lambda}, \mathbf{r}} (t, T)\geq c_1 (\log T)^A \Big\}, 
\end{equation} 
where $c_1 =(1-\e_2)  \alpha_{-} $ is the constant appearing in Proposition
\ref{measureprop}.
Observe that 
\begin{equation}
\begin{split}
  \label{measdiff}
  & \text{meas}\big(\mathcal{A}_{+}(X)\setminus \mathcal{E}(X)\big)  \\
  &  \ge
   \text{meas}\big( \Big\{ t\in  [\sqrt{X}, X-\sqrt{X}] \ | \ F_{ \bm{\lambda}, \mathbf{r}} (t, T)\geq c_1 (\log T)^A \Big\} \big) -2 \sqrt{X} -
     \text{meas}\big( \mathcal{E}(X)\big)  \\
     & \ge \exp(-c_2 T \log T) - 2 \sqrt{X} - c_5 X^{1-\eta}
\end{split}
\end{equation} 
by an application of Proposition \ref{measureprop} and by the bound  \eqref{MeasureEpsilon}
where $c_5 >0$.  
Here $c_2=c_0 (\alpha_{+} -c_3)$ is the constant from  Proposition \ref{measureprop}.
Note that 
\begin{equation}
  \label{expid}
   \exp(-c_2 T \log T) 
   =  \exp(-c_2 (\log X)^{1-\delta} (1-\delta) \log \log  X). 
\end{equation}
Combining \eqref{measdiff} and \eqref{expid} we find that 
\[
   \text{meas}\big(\mathcal{A}_{+}(X)\setminus \mathcal{E}(X)\big)  \ge X \exp \Big( -\frac{\log X}{\log \log X} \Big),
\]
for $X$ sufficiently large. 
By Lemma  \ref{LemFejer} and Assumption 2 \eqref{ass2}, we have 
\begin{equation}
  \label{Fintid}
 F_{ \bm{\lambda}, \mathbf{r}} (t, T) = \int_{-Z}^{Z} \frac{T}{2\pi} K\left(\frac{Tu}{2\pi}\right)  F_{ \bm{\lambda}, \mathbf{r}}(t+u, X) du + o((\log T)^A). 
\end{equation}
Let $t \in \mathcal{A}_{+}(X)\setminus \mathcal{E}(X)$. 
From the definition of the set $\mathcal{A}_{+}(X)$ it follows that
 \begin{equation}\label{LBIntFejerF}
\int_{-Z}^{Z} \frac{T}{2\pi} K\left(\frac{Tu}{2\pi}\right) F_{ \bm{\lambda}, \mathbf{r}}(t+u, X) du\geq  (1-\delta')c_1(\log T)^A
\end{equation}
for $\delta'>0$. 
Since $t\notin \mathcal{E}(X)$,
it follows from the definition of $\mathcal{E}(X)$ and \eqref{LBIntFejerF} that 
\begin{equation}\label{LBIntFejerF2}
\begin{aligned}
\int_{-Z}^{Z} \frac{T}{2\pi} K\left(\frac{Tu}{2\pi}\right) F_{ \bm{\lambda}, \mathbf{r}}(t+u, e^{2X}) du 
& \geq \int_{-Z}^{Z} \frac{T}{2\pi} K\left(\frac{Tu}{2\pi}\right) F_{ \bm{\lambda}, \mathbf{r}}(t+u, X) du -1\\
& \geq (1-\delta')c_1 (\log T)^A-1.
\end{aligned}
\end{equation}
Next, we use the non-negativity of the Fejer kernel to bound the integral on the left hand side of \eqref{LBIntFejerF}.
We have 
\begin{equation}\label{BoundFejerIntegral}
\begin{aligned}
\int_{-Z}^{Z} \frac{T}{2\pi} K\left(\frac{Tu}{2\pi}\right) F_{ \bm{\lambda}, \mathbf{r}}(t+u, e^{2X}) du
& \leq \max_{|u|\leq Z} F_{ \bm{\lambda}, \mathbf{r}}(t+u, e^{2X})\int_{-Z}^{Z} \frac{T}{2\pi} K\left(\frac{Tu}{2\pi}\right)du  \\
& \leq \max_{|u|\leq Z} F_{ \bm{\lambda}, \mathbf{r}}(t+u, e^{2X})
\end{aligned}
\end{equation}
 since   $\int_{-Z}^{Z} \frac{T}{2\pi} K\left(\frac{Tu}{2\pi}\right)du
\leq  \int_{-\infty}^{\infty} \frac{T}{2\pi} K\left(\frac{Tu}{2\pi}\right)du =1$ by  \eqref{Kprops}. 
Combining  \eqref{LBIntFejerF2} and \eqref{BoundFejerIntegral} yields
\begin{equation}
    \max_{|u|\leq Z} F_{ \bm{\lambda}, \mathbf{r}}(t+u, e^{2X})
  \ge (1-\delta')c_1 (\log T)^A-1.
\end{equation} 
Letting $y=t+u$ where $t \in [\sqrt{X}, X-\sqrt{X}]$ and $|u| \le Z=(\log X)^A$, it follows that $y \in [1,X]$ for $X$ sufficiently large
and thus 
\begin{equation}
\begin{split}
  F_{ \bm{\lambda}, \mathbf{r}}(y,e^{2X})  &  \ge (1-\delta')c_1 (\log T)^A-1 \ge (1-\delta'/2) c_1 (\log T)^{A} \\
  & =  (1-\delta'/2) c_1 (1-\delta)^A \log \log X.
\end{split}
\end{equation}
Therefore, we can take 
\begin{equation}
 C_1 = (1-0.5 \delta') c_1 = (1-0.5 \delta')(1-\e'')(1-\delta)^A \alpha_{-}. 
\end{equation}
By the variable changes $X'=e^X$, $x=e^y$ and since $\delta', \e'',\delta$ are arbitrary positive numbers, we deduce 
that $\Phi_{(X')^2}(x) =F_{ \bm{\lambda}, \mathbf{r}}(y,(X')^2) > \alpha_{-}(1-\e) (\log \log \log X')^A$  
where $x \in [1,X']$ and we establish \eqref{MaxOmega} as desired. 

The proof of \eqref{MinOmega} is very similar to that of \eqref{MaxOmega}.  
Instead of $\mathcal{A}_{+}(X)$, we consider the set
\begin{equation}
 \mathcal{A}_{-}(X) = 
 \Big\{ t\in  [\sqrt{X}, X-\sqrt{X}] \ | \ F_{ \bm{\lambda}, \mathbf{r}} (t, T)\leq -c_1 (\log T)^A \Big\}
\end{equation}
where $F_{ \bm{\lambda}, \mathbf{r}} (t, T)$ takes on negative values. 
Note that
\begin{equation}
\begin{split}
  \label{minineq}
   \int_{-Z}^{Z} \frac{T}{2\pi} K\left(\frac{Tu}{2\pi}\right) F(t+u, e^{2X}) du 
   &  \ge \min_{|u|\leq Z} F(t+u, e^{2X})\int_{-Z}^{Z} \frac{T}{2\pi} K\left(\frac{Tu}{2\pi}\right)du \\
   & = 
    \min_{|u|\leq Z} F(t+u, e^{2X}) \Big( 1+ O \Big( \frac{1}{(\log X)^{A+1-\delta}} \Big) \Big),
\end{split}
\end{equation}
by the second equality in \eqref{Kprops}.  
As $t \notin \mathcal{E}(x)$, we have 
\[
     \int_{-Z}^{Z} \frac{T}{2\pi} K\left(\frac{Tu}{2\pi}\right) F(t+u, e^{2X}) du 
     \le \int_{-Z}^{Z} \frac{T}{2\pi} K\left(\frac{Tu}{2\pi}\right) F_{ \bm{\lambda}, \mathbf{r}}(t+u, X) du +1.
\]
By \eqref{Fintid} and the assumption $t \in \mathcal{A}_{-}(X)$, it follows 
that 
\begin{align*}
 \int_{-Z}^{Z} \frac{T}{2\pi} K\left(\frac{Tu}{2\pi}\right) F_{ \bm{\lambda}, \mathbf{r}}(t+u, X) du
 \le -c_1 (\log T)^{A} +o((\log T)^{A}). 
\end{align*}
Combining the last two inequalities, we obtain  for $t \in  \mathcal{A}_{-}(X)\setminus \mathcal{E}(X) $
\begin{equation}
    \int_{-Z}^{Z} \frac{T}{2\pi} K\left(\frac{Tu}{2\pi}\right) F(t+u, e^{2X}) du 
    \le - (c_1-\delta') (\log T)^{A}.
\end{equation}
Combining the last inequality with \eqref{minineq},  we have for $t \in  \mathcal{A}_{-}(X)\setminus \mathcal{E}(X) $
\begin{equation}
     \min_{|u|\leq Z} F(t+u, e^{2X}) \le  - (c_1-\delta'') (\log T)^{A}.
\end{equation}

 \end{proof}

We end this section by deducing Corollary \ref{Mofxcorr} for the summatory function of the M\"obius function. 
\begin{proof}[Proof of Corollary \ref{Mofxcorr}]
Note that if RH is false, then $M(x) = \Omega_{\pm} (x^{\vartheta})$ for some $\vartheta >\frac{1}{2}$
and if the zeta function has a multiple zero then $M(x) = \Omega_{\pm}(x^{\frac{1}{2}} (\log x))$.  Note that this provides 
a significantly stronger bound than \eqref{LimsupMofx}.

Therefore we may assume 
the truth of the Riemann hypothesis and the simplicity of zeros of $\zeta(s)$. 
In this case, we shall apply Theorem \ref{generalsumthm}. It is well-known that  $M(x)$ has an explicit formula
(see \cite[Theorem 14.27, pp.]{Ti}, \cite[Lemma 4, p. 370]{Ng}).  We apply a version from \cite[eq. (4.18)]{ANS}. 
The assumption $J_{-1}(T) \ll T^{\theta}$ with $\theta <2$ implies that 
 \begin{equation}
  \label{Mxexplicit2}
  \frac{M(x)}{\sqrt{x}} = 2 \Re \Big( \sum_{0 < \gamma < X^2} \frac{x^{i\gamma}}{\rho \zeta'(\rho)} \Big) + O(1)
 \end{equation}
 for all $2 \le x \le X$.  It follows that 
 \begin{equation}
     \label{Mxexplicit2}
   \frac{M(x)}{\sqrt{x}} = 2 \Phi_{X^2,  \bm{\lambda}, \mathbf{r}}(x)+ O(1) 
 \end{equation}
 for $2 \le x \le X$, 
 where $\bm{\lambda}= (\lambda_n)_{n \in \mathbb{N}} = (\gamma_n)_{n \in \mathbb{N}} $ and  $\mathbf{r}=(r_n)_{n \in \mathbb{N}}=((\rho_n \zeta'(\rho_n))^{-1})_{n \in \mathbb{N}}$. 
 We shall apply  Theorem \ref{generalsumthm}  and thus we will now show that these specific sequences  
 $\bm{\lambda}$, $\mathbf{r}$,  satisfy Assumptions 1-3, \eqref{ass1}, \eqref{ass2}, \eqref{ass3}. 
 Note that  \eqref{Jminus1over2T} is the statement 
 \begin{equation}
  \label{J12Tbd}
 a_{-} T (\log T)^{\frac{1}{4}} \le J_{-1/2}(T) \le a_{+} T (\log T)^{\frac{1}{4}}.
\end{equation}
Observe that we have 
 \begin{equation}
  \label{eq1}
  \sum_{0 < \gamma_n \le T} \frac{1}{|\gamma_n \zeta'(\rho_n)|}  \le 
  \sum_{0 < \lambda_n \le T} |r_n| = \sum_{0 < \gamma_n \le T} \frac{1}{|\rho_n \zeta'(\rho_n)|} \sim \sum_{0 < \gamma_n \le T} \frac{1}{|\gamma_n \zeta'(\rho_n)|}, 
 \end{equation}
 assuming $J_{-1/2}(T) \ll T (\log T)^{\frac{1}{4}}$ (see \cite[Lemma 7.2.5 (c), pp.117-119]{NgPhd} for details
 on the step showing $\sim$ in the above equation). By partial summation, 
 \begin{equation}
   \label{eq2}
     \sum_{0 < \gamma_n \le T} \frac{1}{|\gamma_n \zeta'(\rho_n)|} 
     = \frac{J_{-1/2}(T)}{T} + \int_{1}^{T}  \frac{J_{-1/2}(t)}{t^2} \, dt. 
 \end{equation}
 Applying the bounds in \eqref{J12Tbd} it follows that 
 \begin{equation}
    \label{eq3}
     a_{-}  \int_{1}^{T} \frac{(\log t)^{\frac{1}{4}}}{t} \, dt  \le  \sum_{0 < \gamma_n \le T} \frac{1}{|\gamma_n \zeta'(\rho_n)|}  \le  a_{+} (\log T)^{\frac{1}{4}}
         + a_{+} \int_{1}^{T} \frac{(\log t)^{\frac{1}{4}}}{t} \, dt
 \end{equation}
 Integrating by parts, we see that $\int_{1}^{T} \frac{(\log t)^{\frac{1}{4}}}{t} \, dt\sim \frac{4}{5} (\log T)^{\frac{5}{4}}$. Combining this with \eqref{eq1}, \eqref{eq2}, and \eqref{eq3} we find that for any $\varepsilon >0$, there exists $T_0$ such that for $T \ge T_0$
 \begin{equation}
     \tfrac{4a_{-}}{5} (\log T)^{\frac{5}{4}} \le   \sum_{0 < \lambda_n \le T} |r_n|  \le   (\tfrac{4a_{+}}{5}+\varepsilon) (\log T)^{\frac{5}{4}}
 \end{equation}
 and thus \eqref{ass1} holds with $ \alpha_{-}=\tfrac{4a_{-}}{5}$, $ \alpha_{+}=\tfrac{4a_{+}}{5}+\varepsilon$, and $A= \frac{5}{4}$. 
 By \eqref{J12Tbd}, it follows that 
 \[
   \sum_{0 < \lambda_n \le  T} \lambda_n |r_n| \ll   \sum_{0<\gamma_n\leq T} \frac{\gamma_n}{|\rho_n\zeta'(\rho_n)|}\leq J_{-1/2}(T) \ll T (\log T)^{1/4}=o(T  (\log T)^{5/4})
 \]
 and thus \eqref{ass2} holds with $A=\frac{5}{4}$.  Next
 \[
   \sum_{0<\lambda_n\leq T} \lambda_n^2 |r_{n}|^2
   = \sum_{0<\gamma_n\leq T} \gamma_n^2
   \frac{1}{|\rho_n\zeta'(\rho_n)|^2} \le J_{-1}(T) \ll T^{2-\vartheta}, 
 \]
 by the bound  \eqref{Jminus1T} and thus Assumption 3 \eqref{ass3} holds with $\theta=2-\e$. 
 Assumption 4 and 5 follow from the well-known formula $N(T) = \# \{ \gamma_n \le T\} \sim \frac{T}{2 \pi} \log T$. 
 As Assumptions 1-5, \eqref{ass1}, \eqref{ass2}, \eqref{ass3}, \eqref{ass4}, \eqref{ass5} hold, it follows from \eqref{Mxexplicit2} and Theorem
 \ref{generalsumthm}  that 
 \[
   \max_{x\in [2, X]}  \frac{M(x)}{\sqrt{x}} \ge \Big(\frac{2 \cdot 4 a_{-}}{5} -\e \Big) (\log \log \log X)^{\frac{5}{4}}, 
 \]
 by adjusting the value of $\e$. Note that this implies inequality \eqref{LimsupMofx}. The proof of 
 \eqref{LiminfMofx} is similar. 
 This completes the proof. 
\end{proof}

\begin{proof}[Proof of Corollary \ref{Lofxcorr}]
The proof of this is extremely similar to the proof of Corollary \ref{Mofxcorr} and thus we will not provide the details. 
Note that $L(x)$ also has an explicit formula (see \cite{F} and \cite[p. 773, eq. (4.21)]{ANS}). 
In this case $r_n = \frac{\zeta(2 \rho_n)}{\rho_n \zeta'(\rho_n)}$ 
and instead of using the bounds \eqref{Jminus1over2T}, we would use \eqref{Kminus1over2T}.
This amounts to replacing $a_{\pm}$ by $b_{\pm}$ in the preceding argument. 
\end{proof}

\begin{proof}[Proof of Theorem \ref{conjthm}]
The proof of this is also very similar to the proofs of Corollaries \ref{Mofxcorr} and \ref{Lofxcorr}.
By the assumption \eqref{errorbd} we have 
\begin{equation}
 E_{\varphi}(x) = 2 \Phi_{X^2,  \bm{\lambda}, \mathbf{r}}(x)+ O(1) 
\end{equation}
for $2 \le x \le X$. Let $\e \in (0,\alpha)$. Note that we are assuming Assumption 1B \eqref{ass1b} which implies that \eqref{ass1}
holds with $\alpha_{\pm} = \alpha \pm \e$. Further, we are also assuming \eqref{ass2}, \eqref{ass3}, \eqref{ass4}, and \eqref{ass5}.
Therefore, we can apply Theorem \ref{generalsumthm} and thus 
\[
   \max_{x \in [2,X]} E_{\varphi}(x) \ge
    ( 2 \alpha -\e ) (\log \log \log X)^{A}.
\]
This implies the first inequality in 
\eqref{lblimsup}.  The proof of the second inequality is similar. 
\end{proof}

\section{Proof of Proposition \ref{MengProp}}
\label{Mengsec}
This section contains Step (vi) of the argument which is the 
 proof of Proposition \ref{MengProp}.  Recall that we are assuming the 
sequences $(\lambda_n)_{n \in \mathbb{N}}$  and $(r_n)_{n \in \mathbb{N}}$  satisfy Assumption 3 \eqref{ass3}
and  Assumption 5 \eqref{ass5}.
\begin{equation} 
  \label{cond1a}
  \sum_{\lambda_n \le T} \lambda_n^2 |r_n|^2 \ll T^{\theta} \text{ with } \theta <2 
  \text{ and }  \sum_{T < \lambda_n \le T+1} 1 \ll \log (T+2). 
\end{equation}
Note that the second bound implies that $N_{ \bm{\lambda}}(T)  =\sum_{\lambda_n \le T} 1  \ll T \log (T+2) $.
By Cauchy-Schwarz these imply 
\begin{equation}
 \label{firstmoment}
  \sum_{\lambda_n \le T} \lambda_n |r_n|
  \le \Big(   \sum_{\lambda_n \le T} \lambda_n^2 |r_n|^2 \Big)^{\frac{1}{2}} N_{ \bm{\lambda}}(T)^{\frac{1}{2}}
   \ll T^{\frac{\theta+1}{2}} \sqrt{\log (T+2)} 
\end{equation}
and thus by partial summation
\begin{equation}
   \label{weightedfirstmoment}
    \sum_{\lambda_n \le T}  |r_n| \ll T^{\frac{\theta-1}{2}} \sqrt{\log (T+2)}. 
\end{equation} 
With these bounds in hand, we establish the following lemma. 
\begin{lem} \label{pslemma}
Assume that \eqref{cond1a} holds.   \\
(i)
 For $a > \frac{\theta+1}{2}$  and for any sufficiently small $\varepsilon >0$, we have 
\begin{equation}
  \label{tail1}
   \sum_{\lambda_n > T} \frac{|r_n|}{\lambda_{n}^{a-1}}
   \ll \frac{1}{T^{a- ( \frac{\theta+1}{2})-\varepsilon}}. 
\end{equation}
(ii) For $b > \theta$, we have 
\begin{equation}
  \label{tail2}
   \sum_{\lambda_n > T} \frac{|r_n|^2}{\lambda_{n}^{b-2}} \ll \frac{1}{T^{b-\theta}}. 
\end{equation}
\end{lem}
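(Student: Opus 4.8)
To prove Lemma~\ref{pslemma}, the plan is to deduce both estimates from a dyadic decomposition of the tail, feeding in the first-moment bound \eqref{firstmoment} for part~(i) and Assumption~3, \eqref{ass3}, for part~(ii). Both parts are of the same shape: bound the contribution of each dyadic block $(2^jT,2^{j+1}T]$ by pulling the power of $\lambda_n$ out of the sum and invoking the relevant moment bound, then sum a geometric-type series over $j\ge0$.

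For part~(i), I would split
\[
\sum_{\lambda_n > T}\frac{|r_n|}{\lambda_n^{a-1}} = \sum_{j\ge 0}\ \sum_{2^jT < \lambda_n \le 2^{j+1}T}\frac{|r_n|}{\lambda_n^{a-1}},
\]
and on the $j$-th block use $\lambda_n^{-(a-1)} = \lambda_n\,\lambda_n^{-a} \le \lambda_n\,(2^jT)^{-a}$, which is legitimate since $a > \tfrac{\theta+1}{2} \ge \tfrac12 > 0$. The block is then at most $(2^jT)^{-a}\sum_{\lambda_n\le 2^{j+1}T}\lambda_n|r_n| \ll (2^jT)^{-a}(2^{j+1}T)^{(\theta+1)/2}\sqrt{\log(2^{j+1}T+2)}$ by \eqref{firstmoment}. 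Using $\log(2^{j+1}T+2)\ll (j+1)\log(T+2)$ and summing, the series $\sum_{j\ge 0}2^{-j(a-(\theta+1)/2)}\sqrt{j+1}$ converges because $a-\tfrac{\theta+1}{2}>0$, which gives $\sum_{\lambda_n>T}|r_n|\lambda_n^{-(a-1)} \ll T^{-(a-(\theta+1)/2)}\sqrt{\log(T+2)}$; absorbing $\sqrt{\log(T+2)}\ll_\varepsilon T^\varepsilon$ then produces \eqref{tail1}.

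For part~(ii), the same dyadic idea applies with \eqref{ass3} in place of \eqref{firstmoment}: on the $j$-th block $\lambda_n^{-(b-2)} \ll (2^jT)^{-(b-2)}$, the bound holding whether $b\ge2$ or $b<2$ since on a dyadic block the extreme value of $t\mapsto t^{-(b-2)}$ is comparable to its value at $2^jT$; and $\sum_{2^jT<\lambda_n\le 2^{j+1}T}|r_n|^2 \le (2^jT)^{-2}\sum_{\lambda_n\le 2^{j+1}T}\lambda_n^2|r_n|^2 \ll (2^jT)^{\theta-2}$ by \eqref{ass3}. Hence the $j$-th block is $\ll (2^jT)^{\theta-b}$, and the sum over $j\ge0$ converges (geometric) precisely because $b>\theta$, yielding $\ll T^{\theta-b}$, which is \eqref{tail2}; no logarithmic factor appears here, consistent with the cleaner exponent.

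The argument is essentially bookkeeping, so there is no serious obstacle. The points that demand care are: (a) phrasing the per-block estimates for $t\mapsto t^{-(a-1)}$ and $t\mapsto t^{-(b-2)}$ so that they remain valid irrespective of the sign of the exponent; (b) confirming that the strict inequalities $a>\tfrac{\theta+1}{2}$ and $b>\theta$ are exactly what make the dyadic (geometric) series converge; and (c) noting that the harmless $\sqrt{\log(T+2)}$ factor inherited from \eqref{firstmoment} (and hence from Assumption~5) is what forces the $\varepsilon$-loss in part~(i), while part~(ii) requires no such loss.
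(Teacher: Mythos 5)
Your dyadic argument is correct, but it differs from the paper's, which runs both parts through Abel/partial summation: the paper sets $U(x)=\sum_{\lambda_n\le x}\lambda_n|r_n|$ and $V(x)=\sum_{\lambda_n\le x}\lambda_n^2|r_n|^2$, writes $\sum_{\lambda_n>T}\lambda_n^{-a}(\lambda_n|r_n|) = -U(T)/T^a + a\int_T^\infty U(t)\,t^{-a-1}\,dt$ (and the analogue with $V$), and then inserts $U(t)\ll t^{(\theta+1)/2}\sqrt{\log(t+2)}\ll_\varepsilon t^{(\theta+1)/2+\varepsilon}$ and $V(t)\ll t^\theta$ to evaluate the integrals. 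Your route replaces the integration-by-parts with a dyadic split and a geometric-series summation, which is more elementary and makes explicit where the strict inequalities $a>\tfrac{\theta+1}{2}$, $b>\theta$ enter (convergence of the geometric series) and why part~(i) alone picks up the $\varepsilon$-loss (the $\sqrt{\log}$ from \eqref{firstmoment}, i.e.\ from Assumption~5, has no counterpart in \eqref{ass3}). The partial-summation proof is slightly more compact and routes the convergence condition through the improper integral instead; the two are of equal strength here. One small cosmetic point in your write-up of part~(ii): rather than appealing to comparability of $t\mapsto t^{-(b-2)}$ on a dyadic block, it is cleaner to note directly that $\lambda_n^{-(b-2)}\le\max(1,2^{|b-2|})\,(2^jT)^{-(b-2)}$ for $2^jT<\lambda_n\le 2^{j+1}T$, with a constant independent of $j$, which is what the uniformity of the $\ll$ requires.
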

\begin{proof}[Proof of Lemma \ref{pslemma}]
(i) Let $U(x) = \sum_{\lambda_n \le x} \lambda_n |r_n|$ and $\varepsilon>0$.  By partial summation and 
\eqref{firstmoment}, 
\begin{align*}
   \sum_{\lambda_n > T} \frac{|r_n|}{\lambda_{n}^{a-1}}
=  \sum_{\lambda_n > T} \frac{1}{\lambda_{n}^a} (\lambda_n |r_n|) = -\frac{U(T)}{T^{a}} + a \int_{T}^{\infty}
     \frac{U(t)}{t^{a+1}} 
     \ll_{a}  \int_{T}^{\infty}  \frac{dt}{t^{a-( \frac{\theta+1}{2})-\varepsilon}} \ll_{a,\theta, \varepsilon}
   \frac{1}{T^{a- ( \frac{\theta+1}{2})-\varepsilon}}, 
\end{align*}
assuming $\varepsilon < a- (\frac{\theta+1}{2})$.  \\
(ii) Let $V(x) = \sum_{\lambda_n \le x} (\lambda_n |r_n|)^2$. By partial summation and the first inequality 
in \eqref{cond1a}, 
\begin{align*}
    \sum_{\lambda_n > T} \frac{|r_n|^2}{\lambda_{n}^{b-2}} =
    \sum_{\lambda_n > T} \frac{1}{\lambda_{n}^b} (\lambda_n |r_n|)^2=
      -\frac{V(T)}{T^b} + b\int_{T}^{\infty} \frac{V(t)}{t^{b+1}} \, dt \ll_{b} \int_{T}^{\infty} \frac{1}{t^{b-\theta+1}} \, dt \ll_{b,\theta} \frac{1}{T^{b-\theta}}, 
\end{align*}
as desired. 
\end{proof}

With this lemma in hand we now establish the proposition.  Note that this proposition is 
 a formalization of the argument in \cite{Me}.   The main difference in Meng's argument is that
 the sum \eqref{sum2} is broken into many pieces of smaller lengths.  Previously, authors as in \cite{ANS}
 only broke it into four pieces  whereas Meng splits into $1/\epsilon$ pieces where $\epsilon$ is sufficiently small. 
\begin{proof}[Proof of Proposition \ref{MengProp}]
Let $\epsilon$ be a positive number which satisfies
\begin{equation}
 \label{epsilonrange}
0 < \epsilon < \frac{2-\theta}{10}.
\end{equation}
Since $|z|^2=z\bar{z}$, we have
\begin{equation}
\begin{split}
  \label{firstinequality}
\int_{V}^{V+1}\Big|\sum_{T<\lambda_n\leq X}r_ne^{iy\lambda_n}\Big|^2dy
&=\sum_{T<\lambda_n\leq X}\sum_{T<\lambda_m\leq X}r_n\overline{r_m}\int_{V}^{V+1}e^{iy(\lambda_n-\lambda_m)}dy\\
&\ll\sum_{T<\lambda_n\leq X}\sum_{T<\lambda_m\leq X}|r_nr_m|\min\left(1,\dfrac{1}{|\lambda_n-\lambda_m|}\right).
\end{split}
\end{equation}
We shall demonstrate that 
\begin{equation}
  \label{rdoublesumbd}
 \sum_{T<\lambda_n\leq X}\sum_{T<\lambda_m\leq X}|r_nr_m|\min\left(1,\dfrac{1}{|\lambda_n-\lambda_m|}\right)
 \ll   \frac{1}{T^{2-\theta-2\epsilon}}
\end{equation}
and this will establish the proposition since we have the inequality \eqref{firstinequality}.
We decompose the sum in \eqref{rdoublesumbd} as $\Sigma_1+\Sigma_2$
where $\Sigma_1$ is the sum of those terms for which we have $|\lambda_n-\lambda_m|<1$, and $\Sigma_2$ 
is the sum of the terms with $|\lambda_n-\lambda_m| \ge 1$. It suffices to establish
$\Sigma_j \ll \frac{1}{T^{2-\theta-2\epsilon}}$ for $j=1,2$. 

 We begin with the bound for $\Sigma_1$. 
By two applications of Cauchy-Schwarz and  \eqref{ass4} we obtain
\begin{eqnarray}
  \Sigma_1&\ll
  &\sum_{T\leq \lambda_m \leq X}|r_m| 
  \sum_{\lambda_m-1\leq\lambda_n\leq\lambda_m+1} |r_n| \nonumber\\
  &\ll&\left(\sum_{T\leq \lambda_m \leq X}|r_m|^2\right)^{\frac{1}{2}}\left(\sum_{T\leq \lambda_m\leq X}\left(\sum_{\lambda_m-1\leq\lambda_n\leq\lambda_m+1}|r_n|\right)^2\right)^{\frac{1}{2}}\nonumber\\
  &\ll&\left(\sum_{T\leq \lambda_m\leq X}|r_m|^2\right)^{\frac{1}{2}}\left(\sum_{T\leq \lambda_m\leq X}\left(\sum_{\lambda_m-1\leq\lambda_n\leq\lambda_m+1}|r_n|^2\right)\cdot\log\lambda_m\right)^{\frac{1}{2}}. 
\end{eqnarray}
By an application of Lemma \ref{pslemma} (ii) with $b=2$ we find
that 
\begin{equation}
   \Sigma_1 \ll \left( \frac{1}{T^{2-\theta}} \right)^{\frac{1}{2}}\left(\sum_{T\leq \lambda_m\leq X}\sum_{\lambda_m-1\leq\lambda_n\leq\lambda_m+1}|r_n|^2 \log\lambda_n\right)^{\frac{1}{2}}\nonumber\\
 \ll \frac{1}{T^{1-\frac{\theta}{2}}}\left(\sum_{T\leq \lambda_n\leq X}m(\lambda_n)|r_n|^2 \cdot\log\lambda_n\right)^{\frac{1}{2}},\nonumber
\end{equation}
where we have swapped summation in the second sum and where
 $$m(\lambda_n)=\#\{\lambda_m: \lambda_m-1\leq \lambda_n\leq \lambda_m+1\}. $$
 By \eqref{ass5} we have $m(\lambda_n)
 \ll \log \lambda_n \ll \lambda_{n}^{\varepsilon}$ and thus 
 \begin{equation}
 \begin{split}
    \label{Sigma1bd}
     \Sigma_1 
     \ll  \frac{1}{T^{1-\frac{\theta}{2}}}\left(\sum_{T\leq \lambda_n\leq X}|r_n|^2 \cdot \lambda_{n}^{2\varepsilon} \right)^{\frac{1}{2}}
     \ll   \frac{1}{T^{1-\frac{\theta}{2}}} \Big(  \frac{1}{T^{2-\theta-2\varepsilon}} \Big)^{\frac{1}{2}}
     = \frac{1}{T^{2-\theta-\varepsilon}}.
 \end{split}
 \end{equation}
We write $\Sigma_2$ as follows,
\begin{equation}\label{sum2}
  \Sigma_2=\sum_{T\leq \lambda_m \leq X} r_m \sum_{\substack{T\leq\lambda_n\leq X\\ |\lambda_m-\lambda_n|>1}}\frac{r_n}{|\lambda_m-\lambda_n|}.
\end{equation}
We select $N$ to be a natural number defined to be $N = \lfloor \frac{1}{\epsilon} \rfloor +2$. Note that $N \ge 3$ and 
 that 
\begin{equation}
\label{Ninequality}
\frac{1}{\epsilon} \le N \le \frac{3}{\epsilon}. 
\end{equation}
Next, we define the sequence of constants 
\begin{equation}
 \label{ajs}
a_1=1-\frac{1}{N},\quad a_2=1-\frac{2}{N},\quad \cdots,\quad a_{N-1}=\frac{1}{N}, \quad\mbox{and}\quad a_N=0
\end{equation}
and we define the sets
\begin{equation}
\begin{split}
 \label{Ljsets}
  L_1 & = \Big\{ \lambda_n \, | \,  T\leq \lambda_n<\lambda_m-\lambda_{m}^{a_1}  \Big\}, \\
  L_j & =   \Big\{\lambda_n \, | \,  \lambda_m-\lambda_{m}^{a_{j-1}} \leq \lambda_n<\lambda_m-\lambda_{m}^{a_j}  \Big\}, \text{ for } 2 \le j \le N-1 \\ 
  L_N & =  \Big\{ \lambda_n \, | \,  \lambda_m-\lambda_{m}^{a_{N-1}}\leq \lambda_n<\lambda_m-1 \Big\},\\
 L_{N+1} & =  \Big\{ \lambda_n \, | \, \lambda_m+1\leq\lambda_n<\lambda_m+\lambda_m^{a_{N-1}}  \Big\}, \\
   L_j & =   \Big\{\lambda_n \, | \,  \lambda_m+\lambda_{m}^{a_{2N+1-j}} \leq \lambda_n<\lambda_m+\lambda_{m}^{a_{2N-j}}  \Big\}, \text{ for } N+2 \le j \le 2N-1 \\ 
   L_{2N} & =  \Big\{\lambda_n \, | \, \lambda_m+\lambda_{m}^{a_1}\leq \lambda_n<2\lambda_m  \Big\},  \\
L_{2N+1} & = \Big\{\lambda_n \, | \,  2\lambda_m \leq\lambda_n  \Big\}.
  \end{split}
  \end{equation} 
Then, by \eqref{sum2} we have
\begin{equation}\label{sum2-sig}
  \Sigma_2=\sum_{\ell=1}^{2N+1} \sigma_{\ell},
\end{equation}
where 
\begin{equation}
 \label{sigmallformula}
\sigma_{\ell}=
  \sum_{T\leq \lambda_m \leq X} r_m  \sum_{\lambda_n \in L_{\ell}} \frac{r_n}{|\lambda_m-\lambda_n|}.
\end{equation}
Some of the $L_{\ell}$'s may be empty when $\lambda_m$ is close to $T$.  In this case, 
we trivially have $\sigma_{\ell}=0$.  We now estimate $\sigma_{\ell}$.  We begin with the case $\ell=1$. 
By the triangle inequality
\begin{equation}
 \label{sigma1bd}
  |\sigma_1| \le \sum_{T\leq \lambda_m\leq X}|r_m| \sum_{\lambda_n\in L_1}\frac{|r_n|}{|\lambda_m-\lambda_n|}.
 \end{equation}
By the  definition of $L_1$,  then enlarging the summation range, and then applying \eqref{weightedfirstmoment}
 \begin{equation}
  \label{innsumbd}
  \sum_{\lambda_n\in L_1}\frac{|r_n|}{|\lambda_m-\lambda_n|} \le 
  \frac{1}{\lambda_{m}^{a_1}}  \sum_{\lambda_n\in L_1} |r_n| 
  \le  \frac{1}{\lambda_{m}^{a_1}}    \sum_{0 < \lambda_n < \lambda_m} |r_n|
  \ll   \frac{1}{\lambda_{m}^{a_1}}  \cdot \lambda_{m}^{\frac{\theta-1}{2}+\epsilon}
  = \frac{1}{\lambda_{m}^{a_1-\frac{\theta-1}{2}-\epsilon}}.
 \end{equation}
 Combining \eqref{sigma1bd} and \eqref{innsumbd} we obtain 
 \begin{equation}
   \sigma_1 \ll   \sum_{T\leq \lambda_m\leq X}  \frac{|r_m|}{\lambda_{m}^{a_1-\frac{\theta-1}{2}-\epsilon}}
   =  \sum_{T\leq \lambda_m\leq X}  \frac{|r_m|}{\lambda_{m}^{1-\frac{1}{N}-\frac{\theta-1}{2}-\epsilon}}
   \ll  \sum_{T\leq \lambda_m\leq X}  \frac{|r_m|}{\lambda_{m}^{1-\frac{\theta-1}{2}-2\epsilon}}
 \end{equation}
 by \eqref{Ninequality}.  Note that $1-\frac{\theta-1}{2}-2\epsilon = a-1$ with $a= \frac{5-\theta}{2}-2 \epsilon$. 
 Observe that $a > \frac{\theta+1}{2}$ follows from the condition \eqref{epsilonrange}.  Thus, we may apply  
  Lemma \ref{pslemma} (i) with $a= \frac{5-\theta}{2}-2 \epsilon$ to 
 obtain 
 \begin{equation}
  \sigma_1 \ll \frac{1}{T^{  \frac{5-\theta}{2}-2 \epsilon-\frac{1+\theta}{2}-\epsilon}}
  = \frac{1}{T^{2 -3\epsilon}}. 
 \end{equation}
We now treat the case  $2\leq  \ell \leq 2N$.
By two applications of the Cauchy-Schwarz inequality, we have 
\begin{eqnarray}
  \sigma_{\ell}&\ll& \left(\sum_{T\leq \lambda_m\leq X} |r_m|^2\right)^{\frac{1}{2}}
  \left(\sum_{T\leq \lambda_m\leq X}\left(\sum_{\lambda_n\in L_{\ell}}\frac{|r_n|}{|\lambda_m-\lambda_n|}\right)^2\right)^{\frac{1}{2}}\nonumber\\
  &\ll&\left(\frac{1}{T^{2-\theta}}\right)^{\frac{1}{2}}\left(\sum_{T\leq \lambda_m\leq X}\left(\sum_{\lambda_n\in L_{\ell}}\frac{|r_n|^2}{|\lambda_m-\lambda_n|^2}\right) \mathcal{N}(L_{\ell})\right)^{\frac{1}{2}}\nonumber\\
\end{eqnarray}
where $\mathcal{N}(L_{\ell})$ is the number of $\lambda_n$'s in $L_{\ell}$. By the definition of the $L_j$'s in \eqref{Ljsets}
and the definition of the $a_j$'s in  \eqref{ajs} it follows that 
\begin{equation}
\mathcal{N}(L_{\ell} )\ll \lambda_m^{a_{{\ell}}}
-  \lambda_m^{a_{{\ell-1}}} \ll \lambda_m^{a_{{\ell}-1}+\epsilon}.
\end{equation}
Since $\lambda_n \in L_{\ell}$ it follows that 
$\frac{1}{|\lambda_m-\lambda_n|} \ll \frac{1}{\lambda_{m}^{a_{\ell}}}$.  
Combining these last two bounds, we obtain for $2\leq \ell \leq N$,
\begin{equation}\nonumber
\frac{\mathcal{N}(L_{\ell})}{|\lambda_m-\lambda_n|^2}\ll \frac{1}{\lambda_m^{2a_{\ell}-a_{{\ell}-1}-\epsilon}}=\frac{1}{\lambda_m^{1-\frac{{\ell}+1}{N}-\epsilon}}\ll\frac{1}{(\lambda_n)^{1-\frac{\ell+1}{N}-\epsilon}}, 
\end{equation}
where in the last step we used that  $\lambda_n\asymp\lambda_m$ for $\lambda_n\in L_{\ell}$.
Then, we have
\begin{equation}
  \sigma_{\ell}  \ll
  \frac{1}{T^{1-\frac{\theta}{2}-\epsilon}}\left(\sum_{T\leq \lambda_m\leq X}
  \sum_{\lambda_n\in L_{\ell}}
  \frac{|r_n|^2}{|\lambda_n|^{1-\frac{\ell+1}{N}-\epsilon}}\right)^{\frac{1}{2}} 
   \ll \frac{1}{T^{1-\frac{\theta}{2}-\epsilon}}\left(\sum_{T\leq \lambda_n\leq X}
    \frac{|r_n|^2}{|\lambda_n|^{1-\frac{\ell+1}{N}-\epsilon}}
  m_{\ell}(\lambda_n) \right)^{\frac{1}{2}},
\end{equation}
where we obtained the second bound by swapping  summation  and we let 
\begin{equation}
  \label{mell}
 m_{\ell}(\lambda_n)=\#\{\lambda_m: \lambda_m-\lambda_m^{a_{\ell-1}}\leq \lambda_n<\lambda_m-\lambda_m^{a_{\ell}}\}.
\end{equation}
Note that this implies for such $n$, $\lambda_n \le \lambda_m \le C \lambda_n$ for some constant $C = C(\epsilon)$. 
By  \eqref{ass4}
\begin{equation}
\begin{split}
 m_{\ell}(\lambda_n)& =\#\{\lambda_m: \lambda_n+\lambda_m^{a_{\ell}}< \lambda_m\leq\lambda_n+\lambda_m^{a_{\ell-1}}\} \\
 & \ll \#\{\lambda_m: \lambda_n+\lambda_{n}^{a_{\ell}}<\lambda_m\leq \lambda_n+(C\lambda_n)^{a_{\ell-1}} \}\ll(\lambda_n)^{1-\frac{\ell-1}{N}+\epsilon}.
 \end{split}
 \end{equation}
 Therefore for $2 \le \ell \le N$
 \begin{equation}
\begin{split}
\label{sigma-l}
 \sigma_{\ell} & \ll \frac{1}{T^{1-\frac{\theta}{2}-\epsilon}}\left(\sum_{T\leq \lambda_n\leq X}
  \frac{|r_n|^2}{|\lambda_n|^{1-\frac{\ell+1}{N}-\epsilon}} \cdot  (\lambda_n)^{1-\frac{\ell-1}{N}+\epsilon} \right)^{\frac{1}{2}} \\
  & = 
    \frac{1}{T^{1-\frac{\theta}{2}-\epsilon}}\left(\sum_{T\leq \lambda_n\leq X}
  |r_n|^2 \lambda_{n}^{\frac{2}{N}} \right)^{\frac{1}{2}} 
  \\ 
   & \le  
    \frac{1}{T^{1-\frac{\theta}{2}-\epsilon}}\left(\sum_{T\leq \lambda_n\leq X}
  |r_n|^2 \lambda_{n}^{2 \epsilon} \right)^{\frac{1}{2}} 
  \\ 
& \ll  \frac{1}{T^{1-\frac{\theta}{2}-\epsilon}} \cdot \left( \frac{1}{T^{2-\theta-2\epsilon}} \right)^{\frac{1}{2}}
= \frac{1}{T^{2 - \theta- 2 \epsilon}} 
\end{split}
\end{equation}
where we have used the bound $\frac{2}{N} \le \frac{2}{\epsilon} $ and applied Lemma \ref{pslemma} (ii) with $b=2-2 \epsilon$
(note that we have $b > \theta$ by \eqref{epsilonrange}. 
Similarly, we have
\begin{equation}\label{sig-1-2}
  \sigma_{\ell}\asymp \sigma_{2N+1-\ell}, \text{~for~} N+1\leq \ell \leq 2N.
\end{equation}
 We now treat the case $\ell=2N+1$. We have
\begin{equation}\nonumber
  \sigma_{2N+1}\ll\sum_{T\leq \lambda_m\leq X} |r_m|
  \left(\sum_{j=1}^{\infty}\sum_{2^j\lambda_m\leq\lambda_n\leq 2^{j+1}\lambda_m}\frac{|r_n|}{|\lambda_m-\lambda_n|}\right).
  \end{equation}
The inner sum is 
\begin{eqnarray*}
  &\ll& \sum_{j=1}^{\infty}\frac{1}{(2^j-1)\lambda_m}\Big(\sum_{2^j\lambda_m\leq\lambda_n\leq 2^{j+1}\lambda_m} |r_m| \Big)
  \nonumber\\
  &\le& \sum_{j=1}^{\infty}\frac{1}{(2^j-1)\lambda_m}\Big(\sum_{2^j\lambda_m\leq\lambda_n\leq 2^{j+1}\lambda_m} |r_m|^2 \Big)^{\frac{1}{2}}
  \Big(\sum_{2^j\lambda_m\leq\lambda_n\leq 2^{j+1}\lambda_m} 1 \Big)^{\frac{1}{2}}
  \nonumber\\
   &\le& \sum_{j=1}^{\infty}\frac{1}{(2^j-1)\lambda_m}\Big(
   \frac{1}{(2^j \lambda_m)^{2-\theta}}
    \Big)^{\frac{1}{2}}
  \Big( (2^{j+1}-2^j) \lambda_m \log(2^{j+1} \lambda_m)  \Big)^{\frac{1}{2}}
  \nonumber\\
 & \ll& \sum_{j=1}^{\infty}\frac{1}{(2^j-1)\lambda_m} \cdot
   \frac{1}{ 2^{j/2} \lambda_{m}^{1-\theta/2}} \cdot  2^{j/2} \lambda_{m}^{\frac{1}{2}} (\lambda_{m}^{\epsilon} + \sqrt{j})
  \nonumber\\
   & \ll& 
   \frac{\lambda_{m}^{\frac{1}{2}}}{\lambda_{m} \cdot \lambda_{m}^{1-\theta/2}}
   \sum_{j=1}^{\infty} \frac{1}{2^j} (\lambda_{m}^{\epsilon} + \sqrt{j}) \\
     \nonumber\\
& \ll & \frac{1}{\lambda_{m}^{\frac{3}{2}-\frac{\theta}{2}-\epsilon}}.
 \end{eqnarray*}
 Note that $\frac{3}{2}-\frac{\theta}{2}-\epsilon> \frac{\theta+1}{2}$ and thus by Lemma \ref{pslemma} (i)
 \begin{equation}
   \sigma_{2N+1} \ll \sum_{\lambda_m \ge T} \frac{|r_m|}{\lambda_{m}^{\frac{3}{2}-\frac{\theta}{2}-\epsilon}}
   \ll  \frac{1}{T^{(\frac{5}{2}-\frac{\theta}{2}-\epsilon)-(\frac{\theta+1}{2})-\epsilon}}
   = \frac{1}{T^{2-\theta-2 \epsilon}}. 
 \end{equation}
 By \eqref{sum2-sig} and estimates for the $\sigma_{\ell}$, it follows that 
\begin{equation}\label{Sigma2bd}
  \Sigma_2\ll_{\epsilon} \frac{1}{T^{2-\theta-4\epsilon}}.
\end{equation}
Thus combining \eqref{Sigma1bd} and \eqref{Sigma2bd}, 
we obtain \eqref{rdoublesumbd}. 
 \end{proof}
 \section{Concluding remarks}
 
 Lamzouri's method provides a lower bound for  the $\limsup$ and an upper bound for the 
 $\liminf$ in   \eqref{lblimsup}. 
 Is it possible to prove the corresponding upper bound assuming ELI (Conjecture \ref{ELI})? 
 
 Further, Lamzouri's article deals with the case that LI is true.  What happens in the case that LI fails?
 Note that the distribution of $M(e^{y})e^{-y/2}$ depends on how the ray 
 $\{ y ( \frac{\gamma_1}{2 \pi}, \cdots,  \frac{\gamma_1}{2 \pi}) \ | \ y \in \mathbb{R}  \} /\mathbb{Z}^N$  
 is distributed within the torus $\mathbb{T}^N:=
 \mathbb{R}^N/\mathbb{Z}^N$ 
 as $N \to \infty$.  If  there are linear relations among the ordinates 
 $\bm{\gamma}=(\gamma_n)_{n \in \mathbb{N}}$, then one can study the distribution 
 of $M(e^{y})e^{-y/2}$ by considering  $  {\bf X}_{M}= {\bf X}_{ss}+{\bf X}_{R}$  where ${\bf X}_{ss}$ would be the sum as in \eqref{Mrandom} 
 restricted to the self-sufficient zeros (see  \cite[Definition 1.3 , p. 3564]{MN}) and ${\bf X}_{R}$ would be the analogous sum with the remaining zeros (see Section 3.3 of \cite{MN} for a discussion of this). It is possible that a careful analysis of the random variables ${\bf X}_{ss}$ and ${\bf X}_{R}$ might lead to a variant of Conjecture  \ref{Ngconjecture} in the case that linear relations exist and a positive proportion of the ordinates $\bm{\gamma}=(\gamma_n)_{n \in \mathbb{N}}$ are self-sufficient. 

Another avenue of research would be to study sequences 
$\bm{\lambda}=(\lambda_n)_{n \in \mathbb{N}}$ and $\mathbf{r}=(r_{n})_{n \in \mathbb{N}}$   
which do not satisfy Assumption 1 \eqref{ass1}.  For instance, in Meng's work on the distribution of $k$-free numbers $\lambda_n =\gamma_n$ and $r_n = \frac{\zeta(\rho_n/k)}{\zeta'(\rho_n)} $ and thus  $\sum_{0<\lambda_n \leq T}  |r_{n}| \ll T^{\frac{1}{2}-\frac{1}{2k} +\e}$. In Meng's Conjecture \cite[p. 296, equation (1.8)]{Me} what are the values for $C_k$?  One could also study summatory functions with complex values.  What are the correct sizes of  
$\psi(x,\chi)$ and $\psi(x,\pi)$? In Section \ref{examples} the real part of this latter function was studied.  There are many other examples to which Theorem \ref{conjthm} applies and it would be interesting to further study some of them  (eg. Chebatorev's density theorem, prime number races). 
 
\section*{Acknowledgements} 
Thank-you to Youness Lamzouri and Nicol Leong for providing comments on the article.

\end{document}